\newtheorem{theorem}{Theorem}[section] %
\newtheorem{corollary}[theorem]{Corollary} %
\newtheorem{lemma}[theorem]{Lemma} %
\newtheorem{proposition}[theorem]{Proposition} %
\theoremstyle{remark} %
  \newtheorem{remark}[theorem]{Remark}} %
\theoremstyle{definition} %
\newcommand{\RR}[0]{\ensuremath{\mathbb{R}}}
\newcommand{\ep}{\varepsilon}
\newcommand{\io}{\iota}
\newcommand{\scal}[0]{\ensuremath{\operatorname{Scal}}}
\newcommand{\ricc}[0]{\ensuremath{\operatorname{Ric}}}
\newcommand{\diver}[0]{\ensuremath{\operatorname{div}}}
\newcommand{\tr}{\ensuremath{\operatorname{tr_\g}}}
\newcommand{\g}[0]{\ensuremath{\mathrm{g}}}
\newcommand{\geph}[0]{\ensuremath{g+\ep^2 h}}
\begin{document}

\title[]{Concentration Phenomena for Conformal Metrics with Constant $Q$-Curvature}

\author{Salom\'on Alarc\'on}
\address{Departamento de Matem\'atica, Universidad T\'ecnica
  Fe\-de\-ri\-co San\-ta Ma\-r\'\i a, Valpara\'\i
  so, Chile}  \email{salomon.alarcon@usm.cl}

  \author{Simón Masnú}
\address{Facultad de Matem\'aticas, Pontificia Universidad Católica 
  de Chile, Santiago, Chile}  \email{smasnub@estudiante.uc.cl}

\author{Pedro Montero}
\address{Departamento de Matem\'atica, Universidad T\'ecnica
  Fe\-de\-ri\-co San\-ta Ma\-r\'\i a, Valpara\'\i
  so, Chile}  \email{pedro.montero@usm.cl}

\author{Carolina Rey}
\address{Departamento de Matem\'atica, Universidad T\'ecnica
  Fe\-de\-ri\-co San\-ta Ma\-r\'\i a, Valpara\'\i
  so, Chile}  \email{carolina.reyr@usm.cl}


\thanks{{\it 2010 Mathematics Subject
    Classification}: 53C21, 58J05, 35J60, 35B33.\\
  \mbox{\hspace{11pt}}{\it Key words}: $Q$-curvature, spaces of constant curvature, bi-laplacian operator.\\
  \mbox{\hspace{11pt}} S. Alarcón is partially supported by Fondecyt Projects 1211766 and 1221365. S. Masnú is partially supported by ANID through Beca de Doctorado Nacional 21241432. P. Montero is partially supported by Fondecyt Projects 1231214 and 1240101. C. Rey is partially supported by Fondecyt Project 3200422.}

\begin{abstract}
Let $(M,g)$ be an analytic Riemannian manifold of dimension $n \geq 5$. 
In this paper, we consider the following constant $Q$-curvature type equation
\begin{equation*}
\ep^4\Delta_{g}^2 u   -\ep^2 b \Delta_{g} u +a u = u^{p} , \qquad \text{in } M, \quad u>0, \quad u\in H^2_g(M)
\end{equation*}
where $a,b$ are positive constants such that $b^2-4 a>0$, $p$ is a sub-critical exponent $1<p<2^\#-1=\frac{n+4}{n-4}$,  $\Delta_g=\diver\nabla$ denotes the Laplace-Beltrami operator and $\Delta_g^2:=\Delta_{g}(\Delta_{g})$ is the bi-laplacian operator on $M$.

We show that if $\ep>0$ is small enough, then there are positive solutions to the above constant $Q$ curvature equation that concentrates around a maximum or minimum point of the function $\tau_g$, given by
\[
     \tau_g(\xi):= \sum_{i, j=1}^{n} \frac{\partial^{2} \g_{\xi}^{i i}}{\partial z_{j}^{2}}(0),
 \]
where $g_{\xi}^{i j}$ denotes the components of the inverse of the metric $g$ in normal geodesic coordinates. This result shows that the geometry of $M$ plays a crucial role in finding solutions to the equation above and provides a metric of constant $Q$-curvature on a product manifold of the form $(M\times X, g+\ep^2 h)$  where $(M,g)$ is Ricci-flat and closed, and $(X,h)$ any $m$-dimensional Einstein Riemannian manifold, $m> n+4$.
\end{abstract}

\maketitle

\tableofcontents

\section{Introduction}

\smallskip

\subsection{Motivation}

The Yamabe problem, a central question in differential geometry, asks whether a closed Riemannian manifold can admit a conformal metric with constant scalar curvature. This problem opened up numerous discussions in partial differential equations and differential geometry, pushing forward research in both fields. At the heart of this study is the notion of conformal invariance of differential operators.

A major milestone in this field was the complete resolution of the classical Yamabe problem. Through the efforts of H. Yamabe, Trudinger, Aubin, and Schoen (\cite{yamabe1960deformation, trudinger1968remarks, aubin1976equations, schoen1984conformal}) it was proved that conformal metrics with constant scalar curvature do exist in any closed Riemannian manifold.  This success established the foundation for addressing more advanced problems, such as the fractional Yamabe problem, which is a natural extension of the classical version. 

\medskip

The fractional Yamabe problem focuses on finding a conformal metric on a Riemannian manifold $(\mathcal{M}, \g)$ with constant fractional scalar curvature $\mathcal{Q}^\gamma_{\g}$. This scalar curvature is defined as 
$$\mathcal{Q}^{\gamma}_{\g} = \mathcal{P}^{\gamma}_{\g}(1),$$
where $\mathcal{P}^{\gamma}_{\g}$ is the fractional conformal Laplacian,  which exhibits the important property of conformal covariance, analogous to the classical conformal Laplacian. This problem, introduced in \cite{gonzalez2013fractional,gonzalez2018further}, has sparked extensive research on the existence of such conformal metrics and has also been fully resolved, with notable contributions from M. González, and J. Qing in \cite{gonzalez2013fractional}, M. González and M. Wang in \cite{gonzalez2018further}, S. Kim, M. Musso, and J. Wei in \cite{kim2017existence}, M. Mayer, C. Ndiaye in \cite{mayer2017fractional} and C. Ndiaye, and Y. Sire, and L. Sun in \cite{ndiaye2021uniformization}.

Specifically, the conformal property implies that under a conformal change of metric $\g_w = w^{\frac{4}{n-2\gamma}} \g$, the fractional conformal Laplacian transforms as:
\begin{equation}\label{Conformal-invariance}
\mathcal{P}^{\gamma}_{{\g}_w}(u) = w^{-\frac{n+2\gamma}{n-2\gamma}} \mathcal{P}^{\gamma}_{\g}(wu).
\end{equation}
For $\gamma = 1$, the operator reduces to the classical conformal Laplacian, and the fractional scalar curvature $\mathcal{Q}^{1}_{\g}$ becomes a constant multiple of the scalar curvature $\scal_{\g}$. When $\gamma = 2$, the operator corresponds to the Paneitz operator $P_{\g}$, and $\mathcal{Q}^{2}_{\g}$ becomes Branson's $Q$-curvature (\cite{branson1985differential, Paneitz}), directly linking the fractional Yamabe problem to the $Q$-curvature problem. 
The $Q$-curvature has rapidly established itself as a distinct area of investigation and has been the subject of extensive research. In the context of 4-dimensional manifolds, the $Q$-curvature assumes a critical role, given its relation to the total $Q$-curvature via the Chern-Gauss-Bonnet formula. Although the case $n=4$ has garnered considerable attention due to its direct association with topological invariants, the examination of $Q$-curvature in higher dimensions remains equally significant, as it extends fundamental geometric principles to a wider class of manifolds.

\medskip

In this paper, we focus on the constant $Q$-curvature problem for dimensions $n \geq 5$. 
On a $n-$dimensional Riemannian manifold $(\mathcal{M}, \g)$, the $Q$-curvature is expressed as
\[
	Q_{\g}=-\frac{1}{2(n-1)} \Delta_{\g} \scal_{\g}-\frac{2}{(n-2)^{2}}\left\|\mathrm{Ric}_{\g}\right\|^{2}+\frac{n^{3}-4 n^{2}+16 n-16}{8(n-1)^{2}(n-2)^{2}} \scal_{\g}^{2},
\]
where $\Delta_{\g} u=\diver_{\g}(\nabla_{\g} u)$ is the Laplace-Beltrami operator on $(\mathcal{M}, \g)$ and $\left\|A\right\|^2=\tr(AA^t)$, while $\mathrm{Ric}_{\g}$ and $\scal_{\g}$ represent the Ricci tensor and scalar curvature, respectively.

Similarly to the Yamabe problem, a key question is whether a conformal metric to $\g$ can be found with constant $Q$-curvature, which reduces to solving a fourth-order elliptic equation. In order to introduce this equation, we need to define the Paneitz operator $P_{\g}$ that satisfies the critical conformal invariance property \eqref{Conformal-invariance} with $\gamma=2.$
Therefore, the constant $Q$-curvature equation for the metric $\g=u^{\frac{4}{n-4}} \mathrm{\g}_{0}$ reads
\begin{equation}\label{ConstantQcurv}
	P_{\g_{0}} u=\lambda u^{\frac{n+4}{n-4}}, \quad \lambda \in \mathbb{R}.
\end{equation}
This equation, far from being a simple extension of the Yamabe problem, raises new questions about curvature in higher dimensions. When considering a local g-orthonormal frame $\left(e_{i}\right)_{i=1}^{n}$, the Paneitz operator can be expressed as 
$$
P_{\g} \psi=\Delta_{\g}^{2} \psi+\frac{4}{n-2} \diver_{\g}\left(\ricc_{\g}\left(\nabla \psi, e_{i}\right) e_{i}\right)-\frac{n^{2}-4 n+8}{2(n-1)(n-2)} \diver_{\g}\left(\scal_{\g} \nabla \psi\right)+\frac{n-4}{2} Q_{\g} \psi .
$$

The constant $Q$-curvature problem is also well defined when $\mathcal{M}$ is $3$ or $4$-dimensional, but it has a different expression. Many authors studied the four-dimensional equation, for example, A. Chang and P. C. Yang in \cite{Chang}, S. Brendle in \cite{Brendle},  Z. Djadli and A. Malchiodi in \cite{Djadli}, J. Li, Y. Li, and P. Liu in \cite{LiLiLiu}, among others.  In dimension $3$ some existence theorems were proved in \cite{HangYang16}.

We are interested in studying the existence of positive solutions to the equation \eqref{ConstantQcurv} on manifolds of dimension larger than $5$. 
In \cite{Malchiodi}, Gursky and Malchiodi made key advancements by showing that under certain conditions ($Q_{\g}$ is non-negative and positive at some point in $\mathcal{M}$) there exist solutions to the constant $Q$-curvature equation. These results were generalized by Hang and Yang in \cite{Hang}, who developed an existence theory for manifolds of dimension $n \geq 5$ with semi-positive $Q$-curvature.  On the other hand, J. Qing and D. Raske established a positive solution of the Paneitz-Branson equation in \cite{Qing} on a locally conformally flat manifold with positive scalar curvature and a Poincar\'e exponent smaller than $\frac{N-4}{2}$. See also \cite{Hebey, Djadli, Esposito, Gursky, Robert, Yang} for further existence results.  

In the case of the round sphere, C. S. Lin \cite{Lin} used the moving-plane approach to classify conformal metrics with constant $Q$-curvature and found an explicit multidimensional family of conformal metrics with constant $Q$-curvature. 

In addition, E. Hebey and F. Robert in \cite{Hebey2}  proved that the Paneitz-Branson  equation is compact when the Paneitz operator is of strong positive type. There are other significant results on the compactness of the solution space, such as \cite{ Li, YanYanLi, Malchiodi2}. On the other hand, J. Wei and C. Zhao in \cite{Wei} built examples of non-compactness of the space of positive solutions in high dimensions. Recently, V'{e}tois \cite{Vetois} showed that if 
 $\g$ is Einstein and not isometric to the constant curvature metric on the sphere, it is the unique metric with constant  $Q$-curvature, up to scaling. This result mirrors Obata's theorem in the Yamabe problem (\cite{Obata}). 
 Moreover, in certain cases, multiple conformal metrics with constant $Q$-curvature exist, see for example \cite{alarcon2023multiplicity, bettiol2021nonuniqueness, julio2024global}.

\medskip

In this paper, we study \eqref{ConstantQcurv} on a Riemannian product, as in \cite{alarcon2023multiplicity}, building a smooth positive solution that concentrates on a certain point of the first factor. Concretely, we prove that if $(M,g)$ and $(X,h)$ are closed Riemannian manifolds, where $(M,g)$ is analytic and Ricci-flat, and $(X,h)$ is an Einstein manifold with positive scalar curvature, then, for sufficiently small $\ep  > 0$, the $N$-dimensional Riemannian product $(M\times X, g+\ep ^2 h)$ has a conformal metric of the form $ u^{\frac{N}{N-4}}(g+\ep ^2 h)$, where $u: M\to \RR$ concentrates at an isolated minimum or maximum point of a suitable function $\tau_g\in C^\infty(M)$ (see Theorem \ref{mainthm}).

\medskip

We ask $(M, g)$ to be an analytic Riemannian manifold because we need to compute the Taylor’s series of $g_\xi$, the pullback of the metric $g$ in normal coordinates. More precisely, if $(M,g)$ is a $C^\infty$ Riemannian manifold, we can compute the Taylor expansion of $g_\xi$, although it may not converge. 

\subsection{Setting of the problem and main result}
Let $(M,g)$ be any closed analytic n-dimensional Riemannian manifold which is  Ricci-flat (ie, $\ricc_g = 0 = \scal_g$), and $(X,h)$ be an m-dimensional Riemannian Einstein manifold, with $\ricc_h=\Lambda_0>0$, scalar curvature $\scal_h=m\Lambda_0$ and $m>n+4$.
We will be interested in positive solutions of the constant $Q$-curvature equation \eqref{ConstantQcurv} for the
product manifold $(M\times X, g+\ep^2 h)$, which is
\begin{equation}\label{lambda}
P_{\geph} u=\lambda_\ep u^{\frac{n+m+4}{n+m-4}}, \quad \lambda_\ep \in \RR.
\end{equation}
Note that $p:=\frac{n+m+4}{n+m-4}<\frac{n+4}{n-4}$. So, when studying solutions which depend only on one of the variables, the equation is sub-critical. Therefore, assuming that $u:M\rightarrow \RR$, we obtain the following expression for the constant $Q$-curvature equation (for details, see \cite{alarcon2023multiplicity}).
	\begin{equation}\label{MainEq2}
		\ep^4 \Delta^2_g u   - \ep^2 b \ \Delta_{g} u +a \ u= u^{p} , \qquad \text{in } M, 
	\end{equation}
where $1<p<2^\sharp-1=\frac{n+4}{n-4}$, and the constant coefficients are
	\begin{align}
	   a&=a_{n,m}=\frac{\Lambda_0^2m \ (N-4)}{2(N-2)^{2}}\left[-2+\frac{(N^{3}-4 N^{2}+16 N-16)}{8(N-1)^{2}}m\right]\label{constante_a}\\ 
    b&=b_{n,m}=\frac{N^{2}-4 N+8}{4(N-1)(N-2)}m\Lambda_0,\label{constante_b}
	\end{align}
  where $N=m+n$.

\begin{remark}
    Recall from \cite[Lemmas 2.2 and 2.3]{alarcon2023multiplicity} that the constants in \eqref{lambda}, \eqref{constante_a} and \eqref{constante_b} satisfy the next conditions.
    \begin{itemize}
        \item If $m=2$, then $\lambda_\ep<0$ for $N=5, 6, 7,8$ and $\lambda_\ep>0$ for $N\geq 9$. If $m\geq 3$, then $\lambda_\ep>0$.
        \item $a_{N,m} >0$ if $m\geq 3$ or $m=2$ and $N\geq 9$. 
        \item If $m\geq 3$, or $m=2$ and $N\geq 9$, then	$b_{n,m}>2\sqrt{a_{n,m}}$.
    \end{itemize}
From now on, we will assume that the dimension of the base manifold $(M,g)$ is 
 $n\geq 5$ and the dimension of the fiber $(X,h)$ is $m>n+4$.
\end{remark}
The compactness of the embedding $H^2(M) \hookrightarrow L^{p+1}(M)$ ensures that 
\[
\inf_{u\in C^\infty(M)\setminus\{0\}}\dfrac{\int_{M} u\,  P_{g+\ep^2 h} (u) \ dv_g }{\left( \int_{M} |u|^{p+1} dv_g \right)^{\frac{2}{p+1}}  }
\]
is achieved, and so problem \eqref{MainEq2} always has a solution for any $p \in (1, 2^\sharp -1)$. Moreover, in \cite{alarcon2023multiplicity}, the authors showed that there are at least $cat(M) + 1$ non-trivial positive solutions to equation \eqref{MainEq2} provided $\ep$ is small enough. Here $cat(M)$ denotes the Lusternik–Schnirelmann category of $M$, which means that the amount of solutions to \eqref{MainEq2} depends on the topological properties of the manifold $M$. 

Just as the topology of $M$ gives us a lower bound for the number of solutions to the equation, we will show that its geometry allows us to find solutions that are concentrated at a geometrically important point of the manifold. More precisely, in this paper we show that, if  $\ep>0$ is small enough, then  a positive solution to  the problem \eqref{MainEq2} is generated by an isolated minimum or maximum point of the function $\tau_g$, which is strongly related to the geometry of the manifold.

It would be interesting to study whether these results remain valid more generally by considering a $C^1$-stable critical point of $\tau_g$. In such a case, does a solution exist that concentrates around this point, as M. Micheletti and A. Pistoia demonstrated in \cite{micheletti2009role} for a Yamabe-type equation? The technical challenge would lie in obtaining more precise estimates for the higher-order terms of the energy functional associated with the equation.
%

In this paper, we prove a concentration result for equation \eqref{MainEq2} with $\ep>0$ small enough. In consequence, we will obtain for each $\ep>0$ small enough, a metric in $(M\times X)$ conformal to $\g+\ep h$ with constant $Q$-curvature for any Einstein Riemannian manifold $(X, h)$ of positive scalar curvature.  To achieve this, we applied the well-known  Lyapunov-Schmidt reduction method, which was introduced in \cite{floer,bahri, bahri1995} and has been used in many articles; for more details see the survey \cite{delpino2016}. Here we follow the approach employed by \cite{micheletti2009role,deng2011blow,rey2021multipeak} in the context of Riemannian manifolds.
 We now briefly describe this method and state the results we have obtained.

Let $H^{2}_\ep$ be the Hilbert space $H^2_g(M)$ equipped with 
 the norm
\begin{equation}\label{norm}
\Vert u\Vert_{\ep}^{2}:=\frac{1}{\ep^{n}}\Big(\ep^{4}\int_{M}\Delta^2_{g}u \ dV_g + b \ep^{2}\int_{M}|\nabla u|^{2} \ dV_g+\int_{M}a \ u^{2} \ dV_g\Big).
\end{equation}
We also denote by $L_{g,\ep}^{q}$ the Banach space $L_{g}^{q}(M)$ furnished with the norm
\begin{equation}\label{def:normLp}
|u|_{q,\varepsilon}:=\Big(\frac{1}{\ep^{n}}\int_{M}|u|^{q} \ dV_g\Big)^{1/q}.
\end{equation}

We will split the space $H^{2}_\ep$ into two subspaces, one finite-dimensional and its orthogonal complement. 
First, we introduce the embedding $\io_{\ep} :H^{2}_{\ep}\hookrightarrow L_{g,\ep}^{p+1}$, in order to rewrite the problem (\ref{MainEq2}) as $ u=\io_{\ep}^{*}(u_{+}^{p})$ (details in Section \ref{sec:Background}).
Then, we introduce the following equation in the Euclidean space, which is {\it the limit equation} for our problem.
\begin{equation}\label{limit}
\Delta^2 u -b \Delta u +  a u  = u^p  \quad \text{in } \RR^n,
\end{equation}
with $a, b >0$, and for $2<p+1=2_N^{\sharp}<2^{\sharp}$, where $2_N^{\sharp}=  \frac{2N}{N-4} $. Solutions to equation \eqref{limit} have been extensively studied in \cite{bonheure2018orbitally}, where they are found as critical points of the energy functional $E: H^2 (\mathbb{R}^n ) \rightarrow \mathbb{R}$, given by
$$
E (u) = \frac{1}{2}  \int_{\mathbb{R}^n} \left(  | \Delta u |^2  +
 b  | \nabla  u |^2  +a u^2 \right) dz -  \frac{1}{p+1} \int_{\mathbb{R}^n} |u|^{p+1} \ dz.
 $$
They prove that if $b \geq 2 \sqrt{a}$, then the infimum 
$$
\mu= \inf_{u \in H^2 (\mathbb{R}^n )\setminus\{0\}} E(u)
$$ 
 is achieved by a positive function that is radially symmetric around a certain point and strictly radially decreasing. Note that this minimizer naturally satisfies \eqref{limit}.

  We say that a solution $U$ to \eqref{limit} is non-degenerate in $H^2(\RR^n)$ if for any
solution $v$ of the linearized equation
 \[
 \Delta^2 v -b \Delta v +  a v  = pU^{p-1}v  \quad \text{in } \RR^n,
 \]
there exists $\xi \in \RR^n$ such that $v(x) = \xi \cdot \nabla U(x)$. In other words, 
if $v$ is a linear combination of the functions
\[
 \Psi^{i}=\frac{\partial U}{\partial z_i}, \quad \text{with }i=1,\dots, n.
\] 
In \cite[Theorem 1.3]{bonheure2018orbitally}, the authors prove the above property for ground-state solutions to equation
\[  
\gamma\Delta^2 u -b \Delta u +  a u  = u^{p}  \quad \text{in } \RR^n, 
\]
under the hypothesis that $1 < p < \frac{4+n}{n}$ and $\gamma > 0$ is small enough. 
They conjecture that the property holds probably without the smallness
assumption on $\gamma$. We claim that this is true. Indeed, consider the equation
 \begin{equation} \label{eq:no-deg}
    \tilde\gamma^4 \Delta^2 w - b\tilde\gamma^2 \Delta w + a w = w^{p} \quad \text{in } \mathbb{R}^n.
\end{equation}
where \( a, b, \tilde\gamma > 0 \).
From the results in \cite{bonheure2018orbitally}, we know that for $\tilde\gamma$ small enough, there exists a ground-state solution $w$ to \eqref{eq:no-deg} that is non-degenerate.
Setting \( U(x) = w\left(\tilde\gamma x\right) \), we observe that the effects of this change of variables are given by
\begin{equation*}
 \Delta^2 U - b \Delta U + a U = U^{p} \quad \text{in } \mathbb{R}^n
\end{equation*}
and
\[
\|U\|_{L^2(\mathbb R^n)}^2 = \tilde\gamma^{-n} \|w\|_{L^2(\mathbb R^n)}^2,
\]
and thus the non-degeneracy of solutions of \eqref{limit} follows.
Moreover, they show that the function $w$ (and in consequence $U$) and its derivatives are exponentially decaying at infinity,
namely, there is a positive constant $C$ such that
\begin{equation}\label{decay}
\displaystyle U(x)<\frac{C}{\sqrt{b^2-4a}}\displaystyle e^{\left(\frac{\sqrt{b-\sqrt{b^2-4a}}}{\sqrt{2}}-\delta\right)|x|}
\end{equation}
for any $\delta>0.$ The same kind of argument also implies that each derivative of $U$ has an exponential decay. 
Note that if we write $U_{\ep}(x)=U(\frac{x}{\ep})$, then $U_{\ep}$ is a solution of 
\begin{equation*}
\ep^4 \Delta^2 u -b\ \ep^2 \Delta u +  a u  = u^p  \quad \text{in } \RR^n.
\end{equation*}
Moreover, in the case of a product manifold, we have \( p = \frac{n+m+4}{n+m-4} \). For the condition \( 1 < p <  \frac{4+n}{n} \) to hold, it is necessary to require \( m > n + 4 \).

\medskip

Our goal is to find an approximation for the solution to equation \eqref{MainEq2} by transforming the function $U$, which is defined in $\mathbb{R}^n$, into a function in $M$ that concentrates on a specific point.

Let $r$ be the injectivity radius of $M$, and let $\chi_r$ be a positive smooth cutoff function such that 
\[
\chi_r(z)=
     \begin{cases}
       1 &\quad\text{if } z \in B(0,r/2)\\
       0 &\quad\text{if } z \in \RR^n\backslash B(0,r),
     \end{cases}
\]
with bounded derivatives up to fourth order, i.e., there are positive constants $c_1,c_2,c_3,c_4$ such that
\begin{equation}\label{chi-decay}
    |\partial_i \chi_r(z)| \leq c_1, \quad|\partial_{ij}^2 \chi_r(z)| \leq c_2, \quad |\partial_{ijk}^3 \chi_r(z)|\leq c_3,\quad \textup{ and }\quad |\partial_{ijk\ell}^4 \chi_r(z)|\leq c_4.
\end{equation}
For any point $\xi \in M$ and for any positive real number $\lambda$, we define the {\it approximate} solution as the function $W_{\ep, \xi}$ on $M$ given by
    \begin{equation}\label{def:functionW}
        W_{\varepsilon,\xi}(x):= \left\{ \begin{array}{lcc}
U_{\ep}(\exp_{\xi}^{-1}(x))\chi_{r}(\exp_{\xi}^{-1}(x)) &   \hbox{ if }  &x\in B_g(\xi,r),  \\
 0 &   & \hbox{ otherwise},
        \end{array}\right.
    \end{equation}
    where $\exp_{\xi}:T_{\xi}M \rightarrow M$ is the geometrical exponential map, $B_g(\xi,r)$ denotes the ball in $M$ centered at $\xi$ with radius $r$ with respect to the distance induced by the metric $g$, and $v_\ep(x)=v(\frac{x}{\ep})$ for all $v\in H^2(\RR^n)$.
\smallskip
 For $i=1,\dots, n$ we denote $\Psi_{\ep}^{i}(z):= \Psi^{i}(\ep^{-1}z),$ and define on $M$ the functions 

\begin{equation}\label{def:functionZ}
Z_{\ep,\xi}^{i}(x):= \left\{ \begin{array}{lcc}
\Psi_{\ep}^{i}(\exp_{\xi}^{-1}(x))\chi_{r}(\exp_{\xi}^{-1}(x)) &   \hbox{ if }  &x\in B_g(\xi,r),  \\
 0 &   & \hbox{ otherwise}. 
\end{array}
\right.
\end{equation}
We will look for a solution $u\in H^2_g(M)$ to \eqref{MainEq2}, or equivalently to 
$$u- \io_{\ep}^{*}(u_{+}^{p}) =0$$ of the form
$
u= W_{\varepsilon,\xi}+\phi,
$
with $\|\phi\|_\ep\to 0.$
%
 So we are looking for a point $\xi\in M$ and a function $\phi$ such that
\begin{equation}\label{newMaineq}
W_{\varepsilon,\xi}+\phi- \io_{\ep}^{*}(f(W_{\varepsilon,\xi}+\phi)) =0.  
\end{equation}
To this end, for each $\ep >0$ and $\xi\in M$, we consider the spaces
\begin{align*}
     &  K_{\ep,\xi} :=\hbox{ span } \{Z_{\ep,\xi}^{i} : i=1, \dots, n\} \\
     & K_{\ep,\xi}^{\perp}:=\{\phi\in H^2_{\ep}: \left\langle \phi, Z_{\ep,\xi}^{i} \right\rangle_{\ep}=0, \text{ for all } i=1, \dots, n\},
\end{align*}
and the associated orthogonal projections $\Pi_{\ep,\xi}$ : $H^2_{\ep}\rightarrow K_{\ep,\xi}$ and $\Pi_{\ep,\xi}^{\perp}$: 
$H^{2}_{\ep}\rightarrow K_{\ep,\xi}^{\perp}$. 
Then, equation \eqref{newMaineq} is equivalent to the system
\begin{subequations}
\begin{align}
&\Pi_{\ep,\xi}^{\perp}\{  W_{\ep,  \xi}+\phi- \io_{\ep}^{*}(f(  W_{\ep,  \xi}+\phi)  \}=0, \label{Pii} \\
&\Pi_{\ep,\xi}\{  W_{\ep,  \xi}+\phi- \io_{\ep}^{*}(f(  W_{\ep,  \xi}+\phi) \}=0.\label{PiP}
\end{align}
\end{subequations}
The goal is to demonstrate, using the Inverse Function Theorem, that given $\ep>0$ small enough, and $\xi\in M$, there exists a unique small $\phi =\phi_{\ep,\xi} \in K_{\ep,\xi}^{\perp}$ that fulfills \eqref{Pii}. This is the essence of Proposition \ref{prop1}. Additionally, we need to find a point $\xi \in M$ that solves the finite-dimensional problem \eqref{PiP}. This is the main idea behind Proposition \ref{prop3}.

\vspace{2mm}

Our main result reads as follows.

 \begin{theorem}\label{mainthm} Let $(M,g)$ be a closed (compact boundaryless) analytic Riemannian manifold, and $\xi_0\in M$ be an isolated minimum or maximum point for the function $\tau_g:M\to \mathbb{R}$, which is given by
 \begin{equation}\label{tau}
     \tau_g(\xi):= \sum_{i, j=1}^{n} \frac{\partial^{2} g_{\xi}^{i i}}{\partial z_{j}^{2}}(0),
 \end{equation}
 where $g_{\xi}^{i j}$ denotes the components of the inverse of the metric $g$ in geodesic normal coordinates.
 Then, there exists $\varepsilon_0 > 0$ such that for any $\varepsilon\in (0, \varepsilon_0)$ problem \eqref{MainEq2} has a solution $u_{\varepsilon}$ which concentrates at $\xi_0$ as $\varepsilon$ goes to zero.
 \end{theorem}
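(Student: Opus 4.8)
The strategy is the standard finite-dimensional Lyapunov--Schmidt reduction, with the geometric information of $\tau_g$ entering only at the final expansion of the reduced energy. I would proceed in the following order.

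\emph{Step 1: Solving the auxiliary equation.} First I would invoke Proposition \ref{prop1}, which asserts that for $\ep>0$ small and every $\xi\in M$ there is a unique $\phi_{\ep,\xi}\in K_{\ep,\xi}^\perp$, depending smoothly on $\xi$, solving the projected equation \eqref{Pii}, together with a quantitative bound of the form $\|\phi_{\ep,\xi}\|_\ep = o(1)$ (in fact a power of $\ep$, with the error controlled by the estimate of $\|W_{\ep,\xi}-\io_\ep^*(f(W_{\ep,\xi}))\|_\ep$). This reduces the problem to finding a critical point of the reduced functional $\tilde J_\ep(\xi) := J_\ep(W_{\ep,\xi}+\phi_{\ep,\xi})$, where $J_\ep$ is the energy functional associated to \eqref{MainEq2} on $H^2_\ep$; it is a standard fact of the method that critical points of $\tilde J_\ep$ correspond to solutions of the full system \eqref{Pii}--\eqref{PiP}, hence to genuine solutions $u_\ep = W_{\ep,\xi_\ep}+\phi_{\ep,\xi_\ep}$ of \eqref{MainEq2}.

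\emph{Step 2: Expanding the reduced energy.} Next I would use the $C^0$ (indeed $C^1$) expansion of $\tilde J_\ep$ supplied by Proposition \ref{prop3}: one shows that, uniformly for $\xi$ in compact sets,
\[
\tilde J_\ep(\xi) = c_0 + c_1\,\ep^{2}\,\tau_g(\xi) + o(\ep^{2}) \quad \text{in } C^1(M),
\]
for explicit constants $c_0\in\RR$ and $c_1\neq 0$ (whose sign and value come from integrating $U^2$, $|\nabla U|^2$ against the second-order Taylor coefficients of the metric in normal coordinates, exactly the combination defining $\tau_g$). The $C^1$ closeness is what matters: since $\xi_0$ is an \emph{isolated} max or min of $\tau_g$, the topological degree (or a simple min/max argument on a small closed ball $\overline{B_g(\xi_0,\rho)}$ on whose boundary $\tau_g$ is strictly below, resp.\ above, its value at $\xi_0$) is stable under the $o(\ep^2)$ perturbation. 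Concretely: choose $\rho$ small so that $\xi_0$ is the unique critical point of $\tau_g$ in $\overline{B_g(\xi_0,\rho)}$ and $\tau_g|_{\partial B_g(\xi_0,\rho)}$ is bounded away from $\tau_g(\xi_0)$; then for $\ep$ small $\tilde J_\ep$ attains an interior extremum $\xi_\ep$ in $B_g(\xi_0,\rho)$, giving the desired critical point.

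\emph{Step 3: Concentration.} Finally, with $\xi_\ep\in B_g(\xi_0,\rho)$ and $\rho$ arbitrarily small (letting $\rho\to 0$ along a diagonal argument, or observing $\xi_\ep\to\xi_0$ since any limit point is a critical point of $\tau_g$ in the ball, hence $\xi_0$), the solution $u_\ep = W_{\ep,\xi_\ep}+\phi_{\ep,\xi_\ep}$ satisfies $\|\phi_{\ep,\xi_\ep}\|_\ep\to 0$, and by construction $W_{\ep,\xi_\ep}$ is a rescaled bump $U(\exp_{\xi_\ep}^{-1}(\cdot)/\ep)$ centered at $\xi_\ep$; the exponential decay \eqref{decay} of $U$ then gives that $u_\ep$ concentrates at $\xi_0$ as $\ep\to 0$, e.g.\ in the sense that the mass of $u_\ep$ escapes from any fixed neighborhood's complement and $\max u_\ep$ is attained near $\xi_\ep$.

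\emph{Main obstacle.} The genuine work is entirely in Steps 1 and 2 (i.e., in Propositions \ref{prop1} and \ref{prop3}), not in assembling them: one must (a) show the linearized operator $\Pi_{\ep,\xi}^\perp\circ(\mathrm{Id}-\io_\ep^*\circ f'(W_{\ep,\xi}))$ is uniformly invertible on $K_{\ep,\xi}^\perp$, which rests on the non-degeneracy of $U$ established above together with careful control of how the metric, the cutoff $\chi_r$, and the exponential map distort the Euclidean estimates; and (b) carry out the delicate fourth-order energy expansion, tracking the $\ep^4\Delta_g^2$, $\ep^2\Delta_g$ and zeroth-order terms through normal coordinates to isolate precisely the coefficient $\tau_g(\xi)$ at order $\ep^2$ and to verify the error is $o(\ep^2)$ in $C^1$. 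Given that these propositions are assumed, the proof of Theorem \ref{mainthm} itself is the short stability-of-extrema argument above.
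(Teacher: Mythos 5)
Your proposal follows essentially the same Lyapunov--Schmidt / min-max strategy as the paper: reduce via Proposition \ref{prop1}, expand the reduced energy via Proposition \ref{prop3} to extract the $\beta\ep^2\tau_g(\xi)$ term, and locate an interior extremizer of $\overline{J}_\ep$ on a small ball $\overline{B_g(\xi_0,\rho)}$ that must converge to $\xi_0$, yielding a genuine solution that concentrates there. One caution: the paper establishes the expansion of $\overline{J}_\ep$ only $C^0$-uniformly (Lemma \ref{lema:Joverline}, Proposition \ref{prop3}), not $C^1$, so the degree-theoretic variant you mention first is not directly supported by the stated propositions; fortunately your secondary min/max argument on a ball whose boundary values of $\tau_g$ are strictly separated from $\tau_g(\xi_0)$ uses only the $C^0$ version and is exactly what the paper does (the paper compares against an auxiliary point $\xi^*_\ep=\exp_{\xi_0}(\sqrt{\ep}\,z)$, but comparing directly against $\xi_0$, as you do, is equivalent thanks to the uniformity of the $o(\ep^2)$).
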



 The Theorem \eqref{mainthm} allows us to find metrics on a product manifold with constant $Q$-curvature. So, as a Corollary, we have
\begin{corollary}
     Let $(M,g)$ be an  $n-$dimensional analytic Riemannian manifold, that is Ricci-flat and closed, and $(X,h)$ an  $m-$dimensional Einstein Riemannian manifold, with $n\geq 5$ and $m>n+4$. 
     If $\xi_0\in M$ is an isolated minimum or maximum point for the function $\tau_g$, then there exists $\varepsilon_0 > 0$ such that for any $\varepsilon\in (0, \varepsilon_0)$ there is a conformal metric to $g+\ep^2 h$ in the product manifold $M\times X$ with constant $Q$-curvature.
\end{corollary}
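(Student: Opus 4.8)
The plan is to deduce the Corollary from Theorem~\ref{mainthm}, the only additional ingredient being the separation–of–variables reduction that links the fourth–order equation \eqref{ConstantQcurv} on the product manifold to equation \eqref{MainEq2} on the base. So I would first recall that reduction. For the $N$–dimensional product $(M\times X,\geph)$ with $N=n+m$, finding a conformal metric with constant $Q$–curvature amounts to solving \eqref{lambda}, i.e. $P_{\geph} u=\lambda_\ep u^{\frac{N+4}{N-4}}$. Looking for solutions $u$ that are pulled back from $M$ and inserting such a $u$ into the explicit expression for the Paneitz operator of $\geph$, one uses that $(M,g)$ is Ricci–flat (so $\ricc_g=\scal_g=0$) and that $(X,h)$ is Einstein with $\ricc_h=\Lambda_0>0$: all curvature terms of $\geph$ coming from the fiber are then \emph{constant} and the cross terms vanish. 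As carried out in detail in \cite{alarcon2023multiplicity}, this turns \eqref{lambda} into \eqref{MainEq2}, namely $\ep^4\Delta_g^2 u-\ep^2 b\,\Delta_g u+a\,u=u^p$ on $M$ with exponent $p=\frac{N+4}{N-4}$ and constants $a=a_{n,m}$, $b=b_{n,m}$ as in \eqref{constante_a}--\eqref{constante_b}.

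Next I would check that the hypotheses of Theorem~\ref{mainthm} are in force. By assumption $(M,g)$ is closed and analytic with $n\ge 5$, and $m>n+4$; in particular $m\ge 3$, so by \cite[Lemmas 2.2 and 2.3]{alarcon2023multiplicity} (recorded in the Remark above) we have $a_{n,m}>0$, $\lambda_\ep>0$, and $b_{n,m}>2\sqrt{a_{n,m}}$, hence $b^2-4a>0$. Moreover $m>n+4$ forces $1<p=\frac{n+m+4}{n+m-4}<\frac{4+n}{n}\le 2^\#-1=\frac{n+4}{n-4}$, so $p$ is subcritical for the $n$–dimensional problem and the non–degeneracy of the limit solution of \eqref{limit} holds, as discussed after \eqref{eq:no-deg}. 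Finally, $\xi_0$ is by hypothesis an isolated minimum or maximum of $\tau_g$, so Theorem~\ref{mainthm} applies.

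I would then conclude as follows. By Theorem~\ref{mainthm} there is $\ep_0>0$ such that for every $\ep\in(0,\ep_0)$ equation \eqref{MainEq2} admits a positive solution $u_\ep\in H^2_g(M)$ concentrating at $\xi_0$ as $\ep\to 0$. By the reduction of the first paragraph, its pullback $\tilde u_\ep$ to $M\times X$ solves \eqref{lambda}, i.e. $P_{\geph}\tilde u_\ep=\lambda_\ep\,\tilde u_\ep^{\frac{N+4}{N-4}}$. A standard elliptic bootstrap on this fourth–order equation (the right–hand side $u_\ep^{p}$ is subcritical, so one can iterate $L^q$–estimates starting from $u_\ep\in H^2(M)$) shows that $u_\ep$, and hence $\tilde u_\ep$, is smooth; together with $u_\ep>0$ this makes $g_\ep:=\tilde u_\ep^{\frac{4}{N-4}}\,\geph$ a genuine smooth Riemannian metric conformal to $\geph$. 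By the conformal covariance \eqref{Conformal-invariance} (with $\gamma=2$) and \eqref{ConstantQcurv}, the $Q$–curvature of $g_\ep$ is the constant $\tfrac{2}{N-4}\lambda_\ep$, which proves the statement.

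Since all the analytic difficulty is already absorbed into Theorem~\ref{mainthm}, I do not expect a serious obstacle here: the Corollary is essentially a matter of packaging. The only point where care is genuinely required is the separation–of–variables step — verifying that a function depending only on the $M$–factor is an admissible test for the Paneitz operator of the product, and that the Ricci–flat/Einstein assumptions kill precisely the terms that would otherwise couple the two factors — but this identity, with the explicit constants $a_{n,m},b_{n,m}$ and $\lambda_\ep$, is already established in \cite{alarcon2023multiplicity}, so no new computation is needed.
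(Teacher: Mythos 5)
Your proposal is correct and takes essentially the same route as the paper, which gives no separate proof of the Corollary because all the ingredients are already in place: the reduction of \eqref{lambda} to \eqref{MainEq2} on the base (borrowed from \cite{alarcon2023multiplicity}), the verification that $a_{n,m}>0$, $b_{n,m}^2-4a_{n,m}>0$ and $1<p<2^\#-1$ under $m>n+4$, and an application of Theorem~\ref{mainthm}. Your added remarks on elliptic bootstrap for smoothness and the identification of the constant $Q$-curvature as $\tfrac{2}{N-4}\lambda_\ep$ via \eqref{Conformal-invariance} are accurate and simply make explicit what the paper leaves implicit.
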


The structure of the paper is as follows. In Section \ref{sec:Background}, we provide the necessary notation and preliminary results. In Section \ref{sec:Finite}, we establish bounds for the involved operators and we show how to apply the Implicit Function Theorem to solve equation \eqref{Pii}. In Section \ref{sec:Expansion}, we examine the energy functional associated with \eqref{MainEq2} and illustrate its expansion around the approximate solution. In Section \ref{sec:Reduced}, we focus on solving the finite-dimensional problem \eqref{PiP} and we prove the Theorem \ref{mainthm}.
Finally, in Section \eqref{sec:Appendix} we compute some auxiliary estimations. 

\smallskip

\section{Background}\label{sec:Background}
In this section, we establish some notation and define the functional framework needed to study our problem. 

\smallskip

Let $H^{2}_\ep$ be the Hilbert space $H^2_g(M)$ equipped with the inner product 
\begin{equation*}
\left\langle u, v\right\rangle_{\ep}:=\frac{1}{\ep^{n}}\Big(\ep^{4}\int_{M}\Delta_{g}u\Delta_{g}v\ dV_g + b
\ep^{2}\int_{M}\nabla u\nabla v \ dV_g+\int_{M}a \ uv \ dV_g\Big),
\end{equation*}
which induces the norm defined in \eqref{norm}.
Similarly, on $\RR^n$ we define the inner product for  $u,v \in H^2(\RR^n )$ as
\[
\left\langle u, v\right\rangle:=\int_{\RR^n }\Delta u\Delta v \ dz + b\int_{\RR^n }\nabla u\nabla v \ dz +a\int_{\RR^n }  \ uv \ dz,
\]
which induces the norm
$
\Vert u\Vert:=\left\langle u, v\right\rangle.
$
We also denote by $L_{g,\ep}^{q}$ the Banach space $L_{g}^{q}(M)$ furnished with the norm
\eqref{def:normLp}.
 As usual, we identify the dual space $({L_{g,\ep}^{q}})^{'}$ with $L_{g,\ep}^{q'}$ with the pairing
\[
 \left\langle \varphi , \psi \right\rangle_{q,q'} = \frac{1}{\ep^{n}}\int_{M}\varphi \psi,
\]
\noindent
for $\varphi \in L_{g,\ep}^{q}$, $\psi \in L_{g,\ep}^{q'}$.
The Sobolev embedding theorem (see e.g. \cite{Hebey96}) asserts that $H_g^2(M)$ is continuously embedded in $L_g^q(M)$ for $1<q\leq 2^\#$, and this embedding is compact when $q<2^\#$. It follows that there exists a constant $c$
independent of $\ep$ such that

\begin{equation}\label{norma_q}
|u|_{q,\ep}\leq c\Vert u\Vert_{\ep} \quad \text{for any $u\in H^{2}_{\ep}$.}
\end{equation}

In consequence, the embedding $\io_{\ep} :H^{2}_{\ep}\hookrightarrow L_{g,\ep}^{p+1}$,
 and the adjoint operator $\io_{\ep}^{*}:L_{g,\ep}^{\frac{p+1}{p}}\rightarrow H^{2}_{\ep}$,  satisfy
$$
u=\io_{\ep}^{*}(v)\Leftrightarrow \left\langle \io_{\ep}^{*}(v),\displaystyle \varphi \right\rangle_{\ep}=\left\langle v, \io (\varphi) \right\rangle_{p+1,\frac{p+1}{p}}=\frac{1}{\ep^{n}}\int_{M}v\varphi, \quad  \text{for all } \varphi\in H^{2}_{\ep},
$$
which means
\begin{equation}
\label{Adjoint}
u=\io_{\ep}^{*}(v) \quad \Leftrightarrow \quad \ep^{4}\Delta^2_{g}u-\ep^{2}b\Delta_{g}u+a u=v \quad \hbox{ (weakly) on } M.
\end{equation}
Let 
$
f(u):=u_{+}^{p}.
$
Now, the problem (\ref{MainEq2}) is equivalent to
\begin{equation}\label{equivalent problem}
u=\io_{\ep}^{*}(f(u))\ ,\ u\in H^{2}_{\ep}.
\end{equation}
Moreover, from \eqref{Adjoint} and \eqref{norma_q},  we have the inequality
\begin{equation}\label{norma_ep}
\Vert \io_{\ep}^{*}(v)\Vert_{\ep}\leq c|v|_{\frac{p+1}{p},\ep} \quad \text{for any }
v\in L_{g,\ep}^{\frac{p+1}{p}} \quad \text{and some } c\in \RR \text{ independent of }\ep.
\end{equation}

Let us recall an important property of the exponential map $\exp_{\xi}: T_{\xi}M \rightarrow M$. There exists an injectivity radius $r > 0$ such that for any $\xi \in M$,  the restriction $$\exp_{\xi} |_{B(0,r)} : B(0,r) \rightarrow B_g(\xi,r)$$ is a diffeomorphism. Here, $B(0,r)$ denotes the ball in $\mathbb{R}^n$ centered at $0$ with radius $r$, and $B_g(\xi,r)$ denotes the ball in $M$ centered at $\xi$ with radius $r$ with respect to the distance 
$$
 d_g(x,y)=\exp_x^{-1}(y).
$$. 
This fact ensures us that the functions $W_{\ep,\xi}$ and $Z_{\ep,\xi}^i$, $i=1, \dots,n$, given by \eqref{def:functionW} and \eqref{def:functionZ}, are well defined. Moreover, it allows us to consider the pullback of the metric in $M$. 

\smallskip

Let $g_{\xi}$ denote the Riemannian metric read in $B(0, r)$ through the normal coordinates defined by the exponential map $\exp _{\xi}$ at $\xi$. We denote $\left|g_{\xi}(z)\right|:=\operatorname{det}\left({g_{\xi}}(z)_{i j}\right)$ and $\left(g_{\xi}^{i j}(z)\right)$ is the inverse matrix of $g_{\xi}(z)$. In particular, it holds

$$
g_{\xi}^{i j}(0)=\delta_{i j} \quad \text{ and } \quad \frac{\partial g_{\xi}^{i j}}{\partial z_{k}}(0)=0 \text { for any } i, j, k ,
$$
where $\delta_{i j}$ is the Kronecker symbol.
 Let $\xi_{0} \in M$ be fixed, and $\xi(z)\in B_g(\xi_0,r)$. Recall the expression of the scalar curvature $S_{g}(\xi_0)$ of $g$ at $\xi_0$, given by
$$
S_{g}\left(\xi_{0}\right)=S_{g}(\xi(0))=\sum_{i, j=1}^{N} \frac{\partial^{2} g_{\xi}^{i i}}{\partial z_{j}^{2}}(0)-\sum_{i, j=1}^{N} \frac{\partial^{2} g_{\xi}^{i j}}{\partial z_{i} \partial z_{j}}(0).
$$
    Assuming $M$ to be Ricci-flat, then we have $S_{g}(\xi)=0$ for all $\xi \in M$,  and 
\begin{equation}\label{def:tau}
\tau_g(\xi):=\sum_{i, j=1}^{n} \frac{\partial^{2} g_{\xi}^{i i}}{\partial z_{j}^{2}}(0)=\sum_{i, j=1}^{n} \frac{\partial^{2} g_{\xi}^{i j}}{\partial z_{i} \partial z_{j}}(0).
\end{equation}

\smallskip

Now we will discuss the asymptotical behavior of the approximate solution. 

\begin{remark}
    
Since the function $U$ is radial it follows that if $i\neq j$ then $\left\langle \Psi_{\ep}^i , \Psi_{\ep}^j \right\rangle_{\ep} =0$. 
Then, it is easy to see that for any $\xi \in M$ it holds
\[
\lim_{\ep \rightarrow 0} \left\langle Z_{\ep,\xi}^{i},\ Z_{\ep,\xi}^{j}\right\rangle_{\ep}= \delta_{ij} \int_{\mathbb{R}^{n}}\left(|\nabla\Psi^{i}|^{2}+|\Psi^{i}|^{2}\right) \ dz.
\]

Let us call $C=\displaystyle \int_{\mathbb{R}^{n}}(|\nabla\Psi^{i}|^{2}+(\Psi^{i})^{2}) \ dz$.
For $y\in B(0, r)\subset \RR^n$ we  set $\xi(y)=  \exp_{\xi}(y)\in B_g(\xi,r)$. Then, around $\xi$ it holds 
\begin{equation}\label{AA}
\lim_{\ep \rightarrow 0} \ep \left\Vert\frac{\partial W_{\ep,\xi(y)} }{\partial y_k}  \right\Vert_{\ep}  = C,
\quad
\lim_{\ep \rightarrow 0} \ep \left\langle Z^i_{\ep , \xi(y)}   , \frac{\partial W_{\ep,\xi(y)} }{\partial y_k}  \right\rangle_{\ep}  = \delta_{ik}C,
\end{equation}
and
\begin{equation}\label{DZ}
\lim_{\ep \rightarrow 0} \ep \left\Vert \frac{\partial Z^i_{\ep , \xi(y)} }{\partial y_k}  \right\Vert_{\ep}  = \displaystyle \int_{\mathbb{R}^{n}}\left(\left\vert\nabla 
\frac{\partial \Psi^{i}}{\partial z_k}\right\vert^{2}+\left(\frac{ \partial \Psi^{i}}{\partial z_k}\right)^{2}\right) \ dz.
\end{equation}
\end{remark}

To set the asymptotic expansion of the exact solution in Section \ref{sec:Expansion}, we give the well-known expansion  of the functions $g_{ij}$ in normal coordinates, which is proved, for instance, in \cite{G}.
\begin{lemma}\label{lema:Taylor}
	Let $(M,\g)$ be an analytic Riemannian manifold of dimension $n$. In a normal coordinates neighborhood of $\xi_{0} \in M$, the Taylor's series of $g$ around  $\xi_{0} $ is given by
\begin{align*}
        \sqrt{\left\lvert g_{\xi}(\varepsilon z)\right\rvert} &= 1 - \frac{\varepsilon^{2}}{4}\sum_{l,r,k=1}^{n} \frac{\partial^{2} g_{\xi}^{l l} }{\partial z_{r} \partial z_{k}} (0) z_{r} z_{k} + O(\varepsilon^{3} |z|^{3}),\\
        g_{\xi}^{i j}(\varepsilon z) &= \delta_{i j} + \frac{\varepsilon^{2}}{2} \sum_{r,k=1}^{n} \frac{\partial^{2} g_{\xi}^{i j} }{\partial z_{r} \partial z_{k}} (0) z_{r} z_{k} + O(\varepsilon^{3} |z|^{3}).
    \end{align*}	
\end{lemma}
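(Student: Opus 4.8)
The plan is to deduce both expansions from ordinary Taylor's theorem, using only the two normal‑coordinate identities recalled just above the statement, namely $g_\xi^{ij}(0)=\delta_{ij}$ and $\partial_k g_\xi^{ij}(0)=0$ for all $i,j,k$. Since $g$ is analytic (in fact $C^3$ suffices here) and $M$ is compact, every third‑order partial derivative of the map $z\mapsto g_\xi^{ij}(z)$ is bounded on the fixed ball $B(0,r)$, uniformly in $\xi$; hence the second‑order Taylor remainder of $g_\xi^{ij}$ at a point $w\in B(0,r)$ is $O(|w|^3)$ with a constant independent of $\xi$. Writing $w=\varepsilon z$ and inserting $g_\xi^{ij}(0)=\delta_{ij}$ and $\partial_k g_\xi^{ij}(0)=0$ into the second‑order Taylor formula yields immediately
\[
g_\xi^{ij}(\varepsilon z)=\delta_{ij}+\frac{\varepsilon^{2}}{2}\sum_{r,k=1}^{n}\frac{\partial^{2} g_\xi^{ij}}{\partial z_r\partial z_k}(0)\,z_r z_k+O(\varepsilon^{3}|z|^{3}),
\]
which is the second identity.

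For the volume density I would use that $(g_{\xi,ij})$ and $(g_\xi^{ij})$ are inverse matrices, so $|g_\xi|=\det(g_{\xi,ij})=\big(\det(g_\xi^{ij})\big)^{-1}$, together with the expansion $\det(I+X)=1+\operatorname{tr}X+O(\|X\|^{2})$. Setting $X(w):=\big(g_\xi^{ij}(w)-\delta_{ij}\big)_{ij}$, the previous step gives $X(w)=O(|w|^{2})$, hence $\|X(w)\|^{2}=O(|w|^{4})$; and since $w=\varepsilon z$ ranges over the fixed ball $B(0,r)$ we have $|w|^{4}\le r\,|w|^{3}$, so $O(|w|^{4})=O(|w|^{3})=O(\varepsilon^{3}|z|^{3})$. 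Taking the trace of the expansion of $g_\xi^{ij}(\varepsilon z)$ then produces
\[
\det\big(g_\xi^{ij}(\varepsilon z)\big)=1+\frac{\varepsilon^{2}}{2}\sum_{l,r,k=1}^{n}\frac{\partial^{2} g_\xi^{ll}}{\partial z_r\partial z_k}(0)\,z_r z_k+O(\varepsilon^{3}|z|^{3}).
\]
Inverting via $(1+s)^{-1}=1-s+O(s^{2})$ — legitimate for $r$ small, where $s=O(|w|^{2})$ is small and $s^{2}=O(|w|^{4})=O(\varepsilon^{3}|z|^{3})$ — and then applying $\sqrt{1+t}=1+\tfrac{t}{2}+O(t^{2})$ gives
\[
\sqrt{|g_\xi(\varepsilon z)|}=1-\frac{\varepsilon^{2}}{4}\sum_{l,r,k=1}^{n}\frac{\partial^{2} g_\xi^{ll}}{\partial z_r\partial z_k}(0)\,z_r z_k+O(\varepsilon^{3}|z|^{3}),
\]
which is the first identity. (Equivalently, one may start from the classical expansion $\sqrt{|g_\xi(\varepsilon z)|}=1+\frac{\varepsilon^{2}}{4}\sum_{l,r,k}\frac{\partial^{2}g_{\xi,ll}}{\partial z_r\partial z_k}(0)z_rz_k+O(\varepsilon^{3}|z|^{3})$ and substitute $\frac{\partial^{2}g_{\xi,ij}}{\partial z_r\partial z_k}(0)=-\frac{\partial^{2}g_\xi^{ij}}{\partial z_r\partial z_k}(0)$, which follows by differentiating $g_{\xi,ik}g_\xi^{kj}=\delta_i^{\,j}$ twice at $0$ and using that the first derivatives of $g_\xi$ and of $(g_\xi^{ij})$ vanish there.)

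There is no serious obstacle: the result is classical (see, e.g., \cite{G}). The two points demanding care are the uniformity of the $O(\varepsilon^{3}|z|^{3})$ remainders in $\xi\in M$, which follows from compactness of $M$ together with smoothness of $g$, and the bookkeeping showing that the quadratic correction terms in the determinant and square‑root expansions — a priori of size $O(|w|^{4})$ — are absorbed into $O(|w|^{3})$ precisely because $w=\varepsilon z$ lies in a fixed small ball. I would also note that only the finite second‑order expansion is used in this lemma, so $C^{3}$ regularity of $g$ would be enough; analyticity is invoked later so that the full Taylor series of $g_\xi$ is at one's disposal.
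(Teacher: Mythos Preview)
Your argument is correct. The paper does not actually prove this lemma: it merely states the expansions and refers the reader to \cite{G} for a proof. Your proposal supplies the missing details in a self-contained way, deriving the second identity directly from the second-order Taylor formula together with $g_\xi^{ij}(0)=\delta_{ij}$ and $\partial_k g_\xi^{ij}(0)=0$, and then obtaining the volume-density expansion by expanding $\det(I+X)$, inverting, and taking the square root. Your remark that the a priori $O(|w|^{4})$ correction terms are absorbed into $O(|w|^{3})=O(\varepsilon^{3}|z|^{3})$ because $w=\varepsilon z$ stays in the fixed ball $B(0,r)$ is exactly the bookkeeping point that makes the stated remainder work, and your observation that only $C^{3}$ regularity is needed for this particular lemma is accurate.
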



\section{Finite dimensional reduction}\label{sec:Finite}

In this section, we use a fixed point theorem and the implicit function theorem to solve equation \eqref{Pii}.

\smallskip

Let $L_{\varepsilon, \xi}, N_{\varepsilon,\xi} : K_{\varepsilon,\xi}^{\perp} \rightarrow K_{\varepsilon, \xi}^{\perp}$    be the following operators
\begin{align*}
    L_{\varepsilon, \xi}(\phi) &:= \Pi_{\varepsilon, \xi}^{\perp} \left\{ \phi - \iota_{\varepsilon}^{*} \left[f'(W_{\varepsilon,\xi} ) \phi \right]  \right\}  \\
    N_{\varepsilon, \xi}(\phi) &:= \Pi_{\varepsilon, \xi}^{\perp} \left\{ \iota_{\varepsilon}^{*} \left[f(W_{\varepsilon,\xi} + \phi) - f(W_{\varepsilon,\xi} ) -f'(W_{\varepsilon,\xi} ) \phi \right]  \right\}.
\end{align*}
And $R_{\varepsilon,\xi} \in K_{\varepsilon,\xi}^{\perp}$ given by
\begin{equation}\label{def_R}
    R_{\varepsilon,\xi} := \Pi_{\varepsilon, \xi}^{\perp} \left\{ \iota_{\varepsilon}^{*} \left[f(W_{\varepsilon,\xi} ) \right] - W_{\varepsilon,\xi}  \right\},
\end{equation}   
so the equation \eqref{Pii} can be rewritten as
$$
L_{\varepsilon, \xi}(\phi) = N_{\varepsilon, \xi}(\phi) + R_{\varepsilon,\xi}.
$$
We will start this section by proving the invertibility of $L_{\varepsilon,\xi}$.
\begin{proposition}\label{invertible}
There exists $\varepsilon_{0} > 0 $ and $c>0$ such that for any $\xi \in M$ and for any $\varepsilon \in (0,\varepsilon_{0})$ 
$$ \| L_{\varepsilon,\xi} (\phi) \|_{\varepsilon} \geq c \| \phi \|_{\varepsilon}, \quad \text{ for any } \phi \in K_{\varepsilon,\xi}^{\perp}. $$
\end{proposition}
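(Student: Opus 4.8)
The plan is to argue by contradiction, following the standard Lyapunov--Schmidt scheme. Suppose the estimate fails; then there exist sequences $\varepsilon_k \to 0$, points $\xi_k \in M$, and functions $\phi_k \in K_{\varepsilon_k,\xi_k}^\perp$ with $\|\phi_k\|_{\varepsilon_k} = 1$ and $\|L_{\varepsilon_k,\xi_k}(\phi_k)\|_{\varepsilon_k} \to 0$. Write $L_{\varepsilon_k,\xi_k}(\phi_k) = \phi_k - \Pi_{\varepsilon_k,\xi_k}^\perp \iota_{\varepsilon_k}^*\big[f'(W_{\varepsilon_k,\xi_k})\phi_k\big] =: w_k$, so $\|w_k\|_{\varepsilon_k}\to 0$. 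The first step is to rephrase this as
\[
\phi_k - \iota_{\varepsilon_k}^*\big[f'(W_{\varepsilon_k,\xi_k})\phi_k\big] = w_k + \sum_{i=1}^n c_k^i Z_{\varepsilon_k,\xi_k}^i
\]
for suitable scalars $c_k^i$, coming from the component of $\iota_{\varepsilon_k}^*[f'(W_{\varepsilon_k,\xi_k})\phi_k]$ in $K_{\varepsilon_k,\xi_k}$. One shows $c_k^i\to 0$: pairing the identity with $Z_{\varepsilon_k,\xi_k}^j$, using $\langle \phi_k, Z^j\rangle_{\varepsilon_k}=0$, the asymptotic orthogonality relations for the $Z^i$ from the Remark, and the fact that $\iota_{\varepsilon_k}^*[f'(W)\,\cdot\,]$ is, up to lower-order terms, a compact-type perturbation whose $K_{\varepsilon_k,\xi_k}$-pairings are controlled by $\|\phi_k\|_{\varepsilon_k}=1$ times a vanishing factor (this uses that $f'(W_{\varepsilon,\xi})Z^j_{\varepsilon,\xi}$ concentrates and that $\Psi^i$ solves the linearized limit equation).

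The second step is the blow-up analysis. Define $\tilde\phi_k(z) := \phi_k(\exp_{\xi_k}(\varepsilon_k z))$ on larger and larger Euclidean balls. The normalization $\|\phi_k\|_{\varepsilon_k}=1$ together with the change of variables and Lemma~\ref{lema:Taylor} gives a uniform $H^2(\mathbb{R}^n)$ bound on $\tilde\phi_k$ (the metric coefficients $g_{\xi_k}(\varepsilon_k z)$ converge to the Euclidean ones locally uniformly with all derivatives, since $M$ is compact and analytic). Hence, up to a subsequence, $\tilde\phi_k \rightharpoonup \phi_\infty$ weakly in $H^2(\mathbb{R}^n)$ and strongly in $L^q_{loc}$. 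Passing to the limit in the equation — using that $\iota_{\varepsilon_k}^*$ translates into the operator $(\Delta^2 - b\Delta + a)^{-1}$ on $\mathbb{R}^n$ after rescaling (identity \eqref{Adjoint}), that $f'(W_{\varepsilon_k,\xi_k})$ rescales to $p\,U^{p-1}$ by the exponential decay \eqref{decay}, and that $w_k$, the $c_k^i Z^i$ terms all vanish — one finds that $\phi_\infty$ solves the linearized limit equation
\[
\Delta^2 \phi_\infty - b\Delta\phi_\infty + a\phi_\infty = p\,U^{p-1}\phi_\infty \quad\text{in }\mathbb{R}^n.
\]
Moreover, the constraint $\langle\phi_k, Z^i_{\varepsilon_k,\xi_k}\rangle_{\varepsilon_k}=0$ passes to the limit and forces $\langle \phi_\infty, \Psi^i\rangle_{H^2(\mathbb{R}^n)}=0$ for all $i$. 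By the non-degeneracy of $U$ established earlier in the excerpt, the only solutions of the linearized equation are linear combinations of the $\Psi^i=\partial_i U$; combined with the orthogonality, this yields $\phi_\infty=0$.

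The third step is to upgrade $\tilde\phi_k\rightharpoonup 0$ to a contradiction with $\|\phi_k\|_{\varepsilon_k}=1$. Test the rescaled equation $\phi_k - \iota_{\varepsilon_k}^*[f'(W)\phi_k] = w_k + \sum_i c_k^i Z^i$ against $\phi_k$ itself in the $\langle\cdot,\cdot\rangle_{\varepsilon_k}$ inner product: the left side gives $\|\phi_k\|_{\varepsilon_k}^2 - \frac{1}{\varepsilon_k^n}\int_M f'(W_{\varepsilon_k,\xi_k})\phi_k^2\,dV_g$, the right side is $\langle w_k,\phi_k\rangle_{\varepsilon_k} + \sum_i c_k^i\langle Z^i,\phi_k\rangle_{\varepsilon_k} = o(1)$ (the last sum vanishes exactly). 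After rescaling, $\frac{1}{\varepsilon_k^n}\int_M f'(W_{\varepsilon_k,\xi_k})\phi_k^2\,dV_g \to p\int_{\mathbb{R}^n} U^{p-1}\phi_\infty^2 = 0$ by the local strong convergence and the exponential decay of $U$ (which confines the integral to a fixed ball plus a negligible tail). Therefore $\|\phi_k\|_{\varepsilon_k}^2 \to 0$, contradicting $\|\phi_k\|_{\varepsilon_k}=1$, and the proposition follows.

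\textbf{Main obstacle.} The delicate point is the tail control in the rescaling argument: one must ensure that the $H^2_\varepsilon$-norm genuinely localizes near $\xi_k$ so that weak-$H^2(\mathbb{R}^n)$ convergence of the rescaled functions, together with local strong convergence, suffices to kill the nonlinear term $\int U^{p-1}\phi_k^2$ and to handle the cutoff $\chi_r$ contributions in $W_{\varepsilon,\xi}$ and $Z^i_{\varepsilon,\xi}$. This is exactly where the exponential decay estimate \eqref{decay} for $U$ and its derivatives, and the uniform bounds \eqref{chi-decay} on $\chi_r$, do the essential work; carrying out these estimates uniformly in $\xi\in M$ (using compactness of $M$) is the technical heart of the proof.
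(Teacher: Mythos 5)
Your proposal is correct and follows essentially the same contradiction-and-blow-up scheme as the paper's proof: normalize $\|\phi_k\|_{\varepsilon_k}=1$, decompose $\phi_k - \iota_{\varepsilon_k}^*[f'(W)\phi_k]$ into its $K^\perp$ and $K$ parts and show the $K$-part vanishes, rescale around $\xi_k$, identify the weak limit as an $H^2$-orthogonal solution of the linearized limit equation, kill it by non-degeneracy, and contradict the normalization via the vanishing of $\frac{1}{\varepsilon_k^n}\int_M f'(W_{\varepsilon_k,\xi_k})\phi_k^2$. The only cosmetic difference is that the paper works with the auxiliary function $u_j := \iota_{\varepsilon_j}^*[f'(W)\phi_j]$ (which satisfies a clean PDE via \eqref{Adjoint}) and rescales $u_j$, while you rescale $\phi_k$ directly; since $u_j$ and $\phi_j$ differ by the norm-vanishing terms $\psi_j+\zeta_j$, both lead to the same limit and the same contradiction.
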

	\begin{proof} 
		We will proceed by contradiction. Since $M$ is compact, we may assume  that there are sequences $\{\ep_j\}_{j \in \mathbb{N}}$, with $\ep_j \rightarrow 0$ and $\{  \xi_j\}_{j \in \mathbb{N}}$, with $\xi_j\to \xi$ for some $\xi\in M$, with   
 $\{\phi_j \}\subset K^{\perp}_{\ep_j,  \xi_j}$, such that 
		$L_{\ep_j, {\xi}_j}(\phi_j)=\psi_j$, with $\|\phi_j\|_{\ep_j}=1$ and  $\|\psi_j\|_{\ep_j}\rightarrow 0.$
			
		Let $\zeta_j:=\Pi_{\ep_j,  \xi_j}\{ \phi_j -i^*_{\ep_j}[f'( W_{\ep_j,  \xi_j})\phi_j] \}$. Hence,
	\begin{equation}\label{phi_j}
	\phi_j -i^*_{\ep_j}[f'( W_{\ep_j,  \xi_j})\phi_j]=\psi_j+\zeta_j.
	\end{equation}
	That is, for each $j$, $\psi_j\in K^{\perp}_{\ep_j,  \xi_j}$ and $ \zeta_j \in K_{\ep_j,  \xi_j}$. Now, let $u_j:= \phi_j-(\psi_j+\zeta_j).$
		
		We will prove the following contradictory consequences of the existence of such series
		\begin{equation}
		\label{one}
		\frac{1}{\ep_j^n} \int_{M} f'(W_{\ep_j,  \xi_j}) u_j^2 \ dV_g\rightarrow 1,
		\end{equation}
		\noindent and 
		\begin{equation}
		\label{zero}
		\frac{1}{\ep_j^n} \int_{M} f'( W_{\ep_j,  \xi_j}) u_j^2 \  dV_g \rightarrow 0,	
		\end{equation}
		\noindent and then, such  sequences $\{  \xi_j\}$, $\{ \phi_j\}$, $\{\ep_j\}$ cannot exist.  We divide the proof in five steps. 
\bigskip 

 {\bf Step 1.} We start by proving 
\begin{equation}\label{zeta_j}
\Vert\zeta_{j}\Vert_{\ep_{j}}\rightarrow 0 \quad \hbox{ as }  j\rightarrow\infty. 
\end{equation}
Since $\zeta_{j}\in K_{\ep_{j}, {\xi_j}}$ , let $\zeta_{j} :=\displaystyle
\sum_{k=1}^{n}a_{j}^{k}Z_{\ep_{j},\xi_{j}}^{k}$. 
For any $l \in \{1,2,\dots, n\}$ we multiply $\psi_{j}+\zeta_j$ (equation (\ref{phi_j})) by $Z_{\ep_{j},\xi_{j}}^{l}$, 
and we find
\begin{equation}\label{product}
\begin{array}{lcc}
\displaystyle
\sum_{k=1}^{n}a_{j}^{k}\ \left\langle Z^k_{\ep_{j},\xi_{j} },\ Z_{\ep_{j},\xi_{j}}^{l}\right\rangle_{\ep_{j}}
&=& - 
\left\langle \io_{\ep_{j}}^{*}[ f'(W_{\ep_j,  \xi_j}) \phi_{j}],\ Z_{\ep_{j},\xi_{j}}^{l}\right\rangle_{\ep_{j}}. 
\end{array}
\end{equation}

On the other hand, we have that
\[
\displaystyle \sum_{k=1}^{n}a_{j}^{k}\ \left\langle 
Z_{\ep_{j},\xi_{j}}^{k},\ Z_{\ep_{j},\xi_{j}}^{l}\right\rangle_{\ep_{j}}=Ca_{j}^{l}+o(1).
\]
Indeed
\begin{align*}
\left\langle Z_{\ep_{j},\xi_{j}}^{k},\ Z_{\ep_{j},\xi_{j}}^{l}\right\rangle_{\ep_{j}}&=\frac{1}{\ep_j^{n}}\Big(\ep_j^{4}\int_{M}
\Delta_{g}Z_{\ep_{j},\xi_{j}}^{k}\Delta_{g}Z_{\ep_{j},\xi_{j}}^{l}  \ dV_g \\
& + b \
\ep_j^{2} \int_{M}\nabla Z_{\ep_{j},\xi_{j}}^{k}\nabla Z_{\ep_{j},\xi_{j}}^{l} \ dV_g+ a \ \int_{M}  \ Z_{\ep_{j},\xi_{j}}^{k}Z_{\ep_{j},\xi_{j}}^{l} \ dV_g\Big)\\
&=I_1+I_2+I_3\\
\end{align*}
where, because of the Taylor expansion of $g$ in normal coordinates given in \eqref{lema:Taylor}, we have
\begin{align*}
    I_1&=\ep_j^{4-n}\int_{B(0,r)} \left( \frac{1}{\sqrt{|g_{\xi_j}(z)|}} \sum_{\alpha,\beta=1}^{n} \partial_{\alpha} \left( g_{\xi_j}^{\alpha \beta}(z) \sqrt{|g_{\xi_j}(z)|} \partial_{\beta} \left( \Psi^{k}\left(\frac{z}{\ep_j}\right)\chi_{r}\left(\frac{z}{\ep_j}\right)\right) \right)\right)\\
    &\quad \times \left( \frac{1}{\sqrt{|g_{\xi_j}(z)|}} \sum_{\alpha,\beta=1}^{n} \partial_{\alpha} \left( g_{\xi_j}^{\alpha \beta}(z) \sqrt{|g_{\xi_j}(z)|} \partial_{\beta} \left( \Psi^{l}\left(\frac{z}{\ep_j}\right)\chi_{r}\left(\frac{z}{\ep_j}\right)\right) \right)\right) \sqrt{|g_{\xi_j}(z)|} \ dz \\
    &=\int_{\RR^{n}} \Delta \Psi^{k} \Delta \Psi^{l} \ dz + o(1).
\end{align*}
Analogously, we have
\[
I_2=b\int_{\RR^n} \nabla\Psi^k\nabla\Psi^l \ dz + o(1)
\quad
\text{ and }
\quad
I_3=a\int_{\RR^n} \Psi^k\Psi^l \ dz + o(1).
\]
Summing up these three terms, we obtain
\[
I_1+I_2+I_3=\int_{\mathbb{R}^n} \Delta\Psi^k\Delta\Psi^l+b\nabla\Psi^k\nabla\Psi^l + a \Psi^k\Psi^l \ dz + o(1)=\left\langle\Psi^k,\Psi^l \right\rangle + o(1)=\delta_{k,l}+o(1).
\]
because of the radial symmetry of the solution $U$.

On the other hand, combining this and (\ref{product}), we get
\begin{equation}\label{a_j1}
Ca_{j}^{l}+o(1)=\displaystyle\frac{1}{\ep_{j}^{n}}\displaystyle\int_{M}[\ep_{j}^{4}\Delta_g Z_{\ep_{j},\xi_{j}}^{l}\Delta_g\phi_{j}+b\ep_{j}^{2}\nabla Z_{\ep_{j},\xi_{j}}^{l}\nabla \phi_{j}+
 a Z_{\ep_{j},\xi_{j}}^{l}\phi_{j}-f'(W_{\ep_j,  \xi_j}) \phi_{j}Z_{\ep_{j},\xi_j}^{l}] \ dV_g. \\
\end{equation}

Let
\begin{equation*}
\tilde{\phi}_{j}(z)=
\left\{\begin{array}{lcc}
\phi_{j}(\exp_{\xi_{j}}(\ep_{j}z))\chi_{r}(\ep_{j}z) & \mathrm{i}\mathrm{f}\ z\in B(0,\ r/\ep_{j}),\\
0 & \mathrm{o}\mathrm{t}\mathrm{h}\mathrm{e}\mathrm{r}\mathrm{w}\mathrm{i}\mathrm{s}\mathrm{e}.
\end{array}\right.
\end{equation*}
Then we have that  for some constant $\tilde c$, $\Vert\tilde{\phi}_{j}\Vert_{H^{1}(\mathbb{R}^{n})}\leq \tilde c\Vert \tilde \phi_{j}\Vert_{\ep_{j}}\leq \tilde c$. 
Therefore, we can assume that $\tilde{\phi}_{j}$ converges weakly to some $\tilde{\phi}$  in $H^{2}(\mathbb{R}^{n})$ and 
 strongly in $L_{\operatorname{loc}}^{q}(\mathbb{R}^{n})$ for any $q\in[2,2^\sharp)$. 
Then, by equation (\ref{a_j1}), we have the following
\begin{align}
Ca_{j}^{l}+o(1)&=\displaystyle\frac{1}{\ep_{j}^{n}}\displaystyle\int_{M}[\ep_{j}^{4}\Delta_g Z_{\ep_{j},\xi_{j}}^{l}\Delta_g\phi_{j}+b\ep_{j}^{2}\nabla Z_{\ep_{j},\xi_{j}}^{l}\nabla \phi_{j}+
 a Z_{\ep_{j},\xi_{j}}^{l}\phi_{j}-f'(W_{\ep_j,  \xi_j}) \phi_{j}Z_{\ep_{j},\xi_j}^{l}] \ dV_g. \nonumber \\
&=\displaystyle\int_{\mathbb{R}^{n}}(\Delta\Psi^{l}\Delta\tilde{\phi}+b\nabla\Psi^{l}\nabla\tilde{\phi}+a\Psi^{l}\tilde{\phi}-f'(U)\Psi^{l}\tilde{\phi})\ dz  +o(1)=o(1).\label{a_j2}
\end{align}

From $(\ref{a_j2})$, we get that $a_{j}^{l}\rightarrow 0$ as $j\to\infty$, and then $(\ref{zeta_j})$ follows.

\bigskip

 {\bf Step 2.}
Now we are ready to prove (\ref{one}).
Recall that $u_j= \phi_j-(\psi_j+\zeta_j)$. 

Since $\Vert\phi_{j}\Vert_{\ep_{j}}=1, \Vert\psi_{j}\Vert_{\ep_{j}}\rightarrow 0$ and $\Vert\zeta_{j}\Vert_{\ep_{j}}\rightarrow 0$ by {\bf Step 1},
then
\begin{equation}\label{norma1}
\Vert u_{j}\Vert_{\ep_{j}}\rightarrow 1.
\end{equation}
Moreover, by (\ref{phi_j}) we know that $u_{j}=\io_{\ep_{j}}^{*}[f'(W_{\ep_{j},\xi_{j}})\phi_{j}]$, and hence, by (\ref{Adjoint}), it satisfies weakly
\begin{equation}\label{u_j solves}
\ep_{j}^{4}\Delta_{g}^2 u_{j}-b\ep_{j}^{2}\Delta_{g} u_{j}+a u_{j}=f'(W_{\ep_j,  \xi})u_{j}  
+f'(W_{\ep_j,  \xi})(\psi_{j}+\zeta_{j}) \quad \hbox{ in } \quad M.
\end{equation} 
Multiplying (\ref{u_j solves}) by $u_{j}$, and integrating over $M$, we get
\begin{equation}
\label{85}
\displaystyle \Vert u_{j}\Vert_{\ep_{j}}^{2}=\frac{1}{\ep_{j}^{n}}\int_{M}f'(W_{\ep_j,  \xi})u_{j}^{2} \ dV_g+
\frac{1}{\ep_{j}^{n}}\int_{M}f'(W_{\ep_j,  \xi})(\psi_{j}+\zeta_{j})u_{j} \ dV_g.
\end{equation}
By H\"{o}lder's inequality and equation (\ref{norma_q}), it follows that 
\begin{align}
     & |\displaystyle \frac{1}{\ep_{j}^{n}}\int_{M}f'(W_{\ep_j,  \xi_j})(\psi_{j}+\zeta_{j})u_{j} \ dV_g|
\\
&\leq \left(\displaystyle \frac{1}{\ep_{j}^{n}}\int_{M}(f'(W_{\ep_j,  \xi_j})\ u_{j})^2 \ dV_g\right)^{\frac{1}{2}} \left(\displaystyle \frac{1}{\ep_{j}^{n}}\int_{M}(\psi_{j}+\zeta_{j})^2\ dV_g\right)^{\frac{1}{2}}
\\
&\leq c  \  \|u_{j}\|_{\ep_j} \ \|\psi_{j}+\zeta_{j}\|_{\ep_j} = o(1).
\end{align}
Finally, since $\Vert\psi_{j}\Vert_{\ep_{j}}\rightarrow 0, \Vert\zeta_{j}\Vert_{\ep_{j}}\rightarrow 0,
$
and 
$\Vert u_{j}\Vert_{\ep_{j}}\rightarrow 1
$
as $j\rightarrow\infty$, then we get equation (\ref{one}). 
\bigskip

{\bf Step 3.} In this Step, for each $j$, we define the auxiliary functions
\begin{equation*}
\tilde{u}_{  j}=
    \left\{\begin{array}{lcc}
u_{j}\left(\exp_{\xi_{  j}}(\ep_{j}z)\right)  \ \chi_{r}\left(\exp_{\xi_{j}}(\ep_{j}z)\right) & \quad  z\in B(0,\ r/\ep_{j}),\\
0 & \mathrm{o}\mathrm{t}\mathrm{h}\mathrm{e}\mathrm{r}\mathrm{w}\mathrm{i}\mathrm{s}\mathrm{e},
\end{array}\right.
\end{equation*}
We claim that $\tilde{u}_j\to\tilde{u}$ weakly in $H^2(\RR^n)$ and strongly in $L^{p+1}(\mathbb{R}^n)$, where $\tilde{u}$ is a weak solution to the equation
\begin{equation}\label{u tilde}
\Delta^2\tilde{u}-b\Delta\tilde{u}+a\tilde{u}=f'(U)\tilde{u} \hspace{0.5cm} \hbox{ in } \mathbb{R}^{n}. 
\end{equation}

First note that each $u_{j}$ is compactly supported in $B_{g}(\xi_{j},\ r)$ and in consequence, $\tilde{u}_j$ is well defined. Moreover, it satisfies
$$
\Vert\tilde{u}_{  j}\Vert_{H^{2}(\mathbb{R}^{n})}^{2}\leq c\Vert {u}_{j}\Vert_{\ep_{j}}^{2}\leq c.
$$
Hence, up to a subsequence, $\tilde{u}_{  j}\rightarrow\tilde{u}$ weakly in $H^{2}(\mathbb{R}^{n})$ and 
strongly in $L_{\operatorname{loc}}^{q}(\mathbb{R}^{n})$ for any $q\in[1,2^\sharp)$, for some $\tilde u \in H^{2}(\mathbb{R}^{n})$. For any $\varphi \in C_0^{\infty}(\RR^n)$, we define
$$
\varphi_j(x):=\varphi\left(\frac{\exp^{-1}_{\xi_{j}}(x)}{\ep_j}\right)  \ \chi_{r}\left(\exp^{-1}_{\xi_{j}}(x)\right), \quad x\in B_g(\xi_{  j},\ep_j R) \subset M.
$$ 
Let us take $R$ such that $\mathit{supp} \  \varphi \subset B(0,R)$, and $j$ large enough such that $B_g(\xi_{  j},\ep_j R) \subset B_g(\xi_{j},r/2)$.  
Multiplying  $(\ref{u_j solves})$ by $\varphi_j$ and integrating over $M$, we have
\begin{align}\label{int}
 &\frac{1}{\ep_j^n} \int_M \left( \ep_j^4 \Delta_g u_j \ \Delta_g \varphi_j +b \ \ep_j^2 \nabla_g u_j \ \nabla_g \varphi_j + a \ u_j \varphi_j \right) d V_g\nonumber\\
&=\frac{1}{\ep_{j}^{n}}\int_{M}f'(W_{\ep_j,  \xi_j}) \ u_{j} \  \varphi_j \ dV_g+
\frac{1}{\ep_{j}^{n}}\int_{M} \ f'(W_{\ep_j,  \xi_j})(\psi_{j}+\zeta_{j}) \  \varphi_j  \ dV_g.
\end{align}

By setting $x=\exp_{\xi_{j}}(\ep_j z)$, we can integrate over $B(0,R)\subset\mathbb{R}^n$ in the following way
\begin{equation*}
\frac{1}{\ep_{j}^{n}}\int_{M}f'(W_{\ep_j,  \xi_j}) \ u_{j} \  \varphi_j \ dV_g= \int_{B(0,R)} f'\left( U(z) \chi_r(\ep_j z)  \right)\tilde u_{  j} \  \varphi \  |g_{\xi_{j}}(\ep_{j} z)|^{1/2} dz
\end{equation*}
and
\begin{equation*}
\frac{1}{\ep_{j}^{n}}\int_{M} \ f'(W_{\ep_j,  \xi_j})(\psi_{j}+\zeta_{j}) \  \varphi_j  \ dV_g= \int_{B(0,R)} f'\left( U(z) \chi_r(\ep_j z) \right)(\tilde \psi_j +\tilde \zeta_j) \  \varphi \  |g_{\xi_{j}} (\ep_{j} z)|^{1/2} dz, 
\end{equation*}

\noindent where $\tilde \psi_{j}(z):=\psi_{j} (\exp_{\xi_{j}}(\ep_j \ z))$ and $\tilde \zeta_{j}(z):=\zeta_{j} (\exp_{\xi_{j}}(\ep_j \ z))$ for $z \in B(0, R/\ep_j)$.

Once again, H\"older's inequality comes to the rescue and we get
\begin{align*}
    \int_{B(0,R)} \tilde u_{  j} \varphi |g_{\xi_{j}}(\ep_{j} z)|^{1/2} dz &\leq  \ep_j^2    \left(\int_{B(0,R)} \tilde u_{  j}^2  dz\right)^{1/2}  \left(\int_{B(0,R)} \varphi^2 |g_{\xi_{j}}(\ep_{j} z)| dz\right)^{1/2} \\
    &\leq c \  \ep_j^2    \|\tilde u_{  j}\|_{H^2(\RR^n)} =o(\ep_{j}).
\end{align*}

\noindent with $c$ an upper bound for $\int_{B(0,R)} \varphi^2 |g_{\xi_{j}}(\ep_{j} z)| dz$. Recall also that $u_{  j}$ is bounded independently of $j$ in $H^2(\RR^n)$.
Hence, taking the limit in (\ref{int}) we have

\begin{equation}
\label{weaklysolves}
\lim_{j\to\infty}\frac{1}{\ep_j^n} \int_M \left( \ep_j^4 \Delta_g u_j \ \Delta_g \varphi_j +b \ \ep_j^2 \nabla_g u_j \ \nabla_g \varphi_j + a \ u_j \varphi_j \right) dV_g = \int_{\RR^n} f'\left( U(z)  \right) \tilde u \  \varphi \  dz,
\end{equation} 
 \noindent since $\tilde \psi_j, \tilde \zeta_j \rightarrow 0$ weakly in $H^2(\RR^n)$ and strongly in $L^{p+1}(\mathbb{R}^n)$. Equation (\ref{weaklysolves}) proves the claim. 

\bigskip

{\bf Step 4.}
We now claim that for any $k \in \{1,2,...n\}$, $\tilde u$ satisfies also
\begin{equation}
\label{productu}
\int_{\mathbb{R}^{n}}\left(\Delta\Psi^{k} \Delta \tilde{u} +\nabla\Psi^{k}\nabla\tilde{u}+\Psi^{k}\tilde{u}\right) \ dz=0.
\end{equation}

Since $\phi_{j}, \psi_{j}\in K_{\ep_{j},  \xi_{j}}^{\perp}$, from equation (\ref{zeta_j}) we have
\begin{equation}\label{langle}
|\left\langle Z_{\ep_{j},\xi_{  j}}^{k},\ u_{j}\right\rangle _{\ep_{j}}|
= | \left\langle Z_{\ep_{j},\xi_{  j}}^{k},\phi_{j} -\psi_j-\zeta_j \right\rangle_{\ep_{j}} | 
= |\left\langle Z_{\ep_{j},\xi_{  j}}^{k},\ \zeta_{j}\right\rangle_{\ep_{j}} |
\leq \Vert Z_{\ep_{j},\xi_{  j}}^{k}\Vert_{\ep_{j}} \Vert\zeta_{j}\Vert_{\ep_{j}}=o(1).
\end{equation}

 On the other hand, we have
\begin{align}
\left\langle Z_{\ep_{j},\xi_{  j}}^{k},\ u_{j}\right\rangle_{\ep_{j}}&=\frac{1}{\ep_{j}^n}\int_{M}
\left(\ep_{j}^{4} \ \Delta_{g}Z_{\ep_j ,\xi_{  j} }^{k}\Delta_g u_j + b \ \ep_{j}^{2} \ \nabla Z_{\ep_j ,\xi_{  j} }^{k}\nabla_g u_j+ \
a \ Z_{\ep_{j},\xi_{  j}}^{k} \ u_{j}\right) \ dV_g\nonumber\\
&\leq c\displaystyle \int_{\mathbb{R}^{n}}\left(\Delta\Psi^{k}\Delta\tilde{u}+\nabla\Psi^{k}\nabla\tilde{u}+\Psi^{k}\tilde{u}\right) \ dz+o(1). \label{tilde} 
\end{align}
From $(\ref{langle})$ and $(\ref{tilde})$ we prove the claim of equation (\ref{productu}).
Therefore, $(\ref{u tilde})$ and $(\ref{productu})$ imply that $\tilde{u}=0$, because we assume $U$ to be non-degenerate.  

\bigskip
{\bf Step 5.} In this final Step, we deduce equation (\ref{zero}). 
Since $\tilde u=0$ by {\bf Step 4}, we know that
\begin{equation*}
\frac{1}{\ep_{j}^{n}}\int_{M}f'(W_{\ep_{j},  \xi_{j}}) \ u_{j}^{2} \ dV_g
=\int_{{B(0, \ \ep_j  r)}}  f'\left(U(z) \chi_{r}(\ep_{j} z)  \right)\tilde u_{  j}^2(z) \  |g_{\xi_{j}}(\ep_{j} z)|^{1/2} \ dz
= o(1).
\end{equation*}
This last equation proves (\ref{zero}), which contradicts (\ref{one}) and concludes the proof.
\end{proof} 
\smallskip

Next, we study the term estimate of $R_{\ep, \xi}=\Pi^{\perp}_{\ep, \xi}\{ i^*_{\ep}[f(W_{\ep, \xi})]-W_{\ep, \xi} \}.$


\begin{remark}\label{Lapl. in normal coord} Let $v(z) := u (\exp_\xi (z))$, $z \in B(0, r )$, $u\in H_g^2(M)$. From \cite[Remark 3.2]{micheletti2009role} we know that
$$
 \Delta_{g_{\xi}} v=-\Delta v+A^{ij} \partial_{i j}^{2} v+B^{k} \partial_{k} v 
$$
where
$$
 A^{ij}(z):=-\left[g^{i j}(z)-\delta^{ij}(z)\right],
\text{ and } \quad B^{k}(z):=g^{i j}(z) \Gamma_{i j}^{k}(z).
$$
Here $\Delta$ is the Euclidean Laplacian and we are using the Einstein summation convention.
Then, we have
$$
\begin{aligned}
\Delta_{g_\xi}^2 v=\Delta_{g_\xi}[- & \left.\Delta v+A^{st} \partial_{s t}^{2} v+B^{h} \partial_{h} v\right] \\
= & -\Delta\left[-\Delta v+A^{st} \partial_{s t}^{2} v+B^{h} \partial_{h} v\right]+A^{ij} \partial_{i j}^{2}\left[-\Delta v+A^{st} \partial_{s t}^{2} v+B^{h} \partial_{h} v\right] \\
& +B^{k} \partial_{k}\left[-\Delta v+A^{st} \partial_{s t}^{2} v+B^{h} \partial_{h} v\right].
\end{aligned}
$$
\end{remark}
We also need to recall the formula for the bi-laplacian of a product function, which is
\[
   \Delta^2 (U_\ep\chi_r) = \Delta^2 (U_\ep)\chi_r + 2\Delta(U_\ep) \Delta\chi_{r}+  4\partial^3_{ijj} U_\ep\partial_i \chi_r
+4\partial_{ij}^2 U_\ep \partial_{ij}^2\chi_r + 4 U_\ep\partial^3_{ijj} \chi_r + U_\ep\Delta^2 (\chi_r).
\] 
Now, we are ready to prove the following result.

\begin{lemma}
	\label{estimateR}
 There exist $\ep_0>0$ and $c>0$ such that for any $\ep\in(0,\ep_0)$, it holds 
 \[
 \Vert R_{\ep, \xi}\Vert_\ep \leq c \ep^2.
 \]
\end{lemma}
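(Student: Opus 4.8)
The plan is to estimate $\|R_{\ep,\xi}\|_\ep$ by exploiting the fact that $W_{\ep,\xi}$ is an \emph{almost}-solution: $U_\ep$ solves the limit equation exactly on $\RR^n$, and the only errors come from (i) the cutoff $\chi_r$, and (ii) the difference between the Euclidean operator $\ep^4\Delta^2-b\ep^2\Delta+a$ and its Riemannian counterpart $\ep^4\Delta_g^2-b\ep^2\Delta_g+a$ read in normal coordinates. Concretely, since $\Pi_{\ep,\xi}^\perp$ has norm $\le 1$ and, by \eqref{norma_ep}, $\|\io_\ep^*(v)\|_\ep\le c|v|_{\frac{p+1}{p},\ep}$, it suffices to write
\[
R_{\ep,\xi}=\Pi_{\ep,\xi}^\perp\,\io_\ep^*\!\Big(f(W_{\ep,\xi})-\big[\ep^4\Delta_g^2 W_{\ep,\xi}-b\ep^2\Delta_g W_{\ep,\xi}+a W_{\ep,\xi}\big]\Big),
\]
using the characterization \eqref{Adjoint} of $\io_\ep^*$ applied to $W_{\ep,\xi}$ itself. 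Hence
\[
\|R_{\ep,\xi}\|_\ep\le c\,\big|f(W_{\ep,\xi})-\ep^4\Delta_g^2 W_{\ep,\xi}+b\ep^2\Delta_g W_{\ep,\xi}-a W_{\ep,\xi}\big|_{\frac{p+1}{p},\ep},
\]
and the whole game is to bound this $L^{\frac{p+1}{p}}_{g,\ep}$-norm by $c\ep^2$.

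First I would pass to normal coordinates centered at $\xi$ and rescale by $z\mapsto \ep z$, so that the $L^{\frac{p+1}{p}}_{g,\ep}$-norm becomes $\big(\int_{B(0,r/\ep)}|\cdots|^{\frac{p+1}{p}}\sqrt{|g_\xi(\ep z)|}\,dz\big)^{p/(p+1)}$ with the integrand evaluated at $U(z)\chi_r(\ep z)$ and its derivatives. On the region $B(0,r/(2\ep))$ the cutoff is identically $1$, and there I would use Remark \ref{Lapl. in normal coord} together with Lemma \ref{lema:Taylor}: the coefficients satisfy $A^{ij}(\ep z)=O(\ep^2|z|^2)$ and $B^k(\ep z)=O(\ep^2|z|)$ (since $g_\xi^{ij}(0)=\delta_{ij}$ and the first derivatives vanish), so that $\ep^4\Delta_{g_\xi}^2 v - b\ep^2\Delta_{g_\xi} v$ differs from the Euclidean $\ep^4\Delta^2 v - b\ep^2\Delta v$ by terms carrying at least one factor $\ep^2$ times a polynomial weight in $z$ acting on the (exponentially decaying) derivatives of $U$. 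Since $U$ solves $\ep^4\Delta^2 U_\ep-b\ep^2\Delta U_\ep+aU_\ep=U_\ep^p=f(U_\ep)$ exactly, the Euclidean part cancels $f(U_\ep)$, leaving precisely these $O(\ep^2)$ error terms. Using the exponential decay \eqref{decay} of $U$ and all its derivatives, the polynomially-weighted integrals $\int_{\RR^n}(1+|z|)^{N}|D^\alpha U(z)|^{\frac{p+1}{p}}\,dz$ converge, and one gets a contribution $\le c\ep^2$ for this inner region. On the annular region $B(0,r/\ep)\setminus B(0,r/(2\ep))$ the cutoff terms $\Delta\chi_r$, $\partial^2\chi_r$, $\partial^3\chi_r$, $\partial^4\chi_r$ are supported where $|z|\ge r/(2\ep)$ and are bounded by \eqref{chi-decay}; the extra factors of $U_\ep$ and its derivatives evaluated there are of size $e^{-c/\ep}$ by \eqref{decay}, which is $o(\ep^2)$ (in fact $o(\ep^k)$ for every $k$), so this region is harmless. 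Collecting the two regions gives the bound.

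The main obstacle — and the step needing the most care — is the bookkeeping of the bi-Laplacian error. Expanding $\Delta_{g_\xi}^2$ via the nested expression in Remark \ref{Lapl. in normal coord} produces many terms: products like $A^{ij}\partial_{ij}^2(A^{st}\partial_{st}^2 v)$, $B^k\partial_k(B^h\partial_h v)$, cross terms $\Delta(A^{st}\partial_{st}^2 v)$, etc., and one must verify that \emph{every} term that does not already appear in the Euclidean $\Delta^2$ carries a net factor of at least $\ep^2$ after the rescaling (some terms involving $\partial A$ or $\partial B$ could a priori lose a power), and that the residual polynomial weights in $z$ are integrable against $|D^\alpha U|^{\frac{p+1}{p}}$. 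Here the Ricci-flatness of $(M,g)$ is not needed for the $O(\ep^2)$ estimate itself (it will matter later, for the sign of $\tau_g$ in the energy expansion), but it is worth noting that the claimed rate $\ep^2$ — rather than $\ep^1$ — hinges precisely on the vanishing of $\partial_k g_\xi^{ij}(0)$. I would organize this as a short sub-lemma listing, schematically, $\ep^4\Delta_g^2 W_{\ep,\xi}-b\ep^2\Delta_g W_{\ep,\xi}+aW_{\ep,\xi}-f(W_{\ep,\xi})=\ep^2\,\mathcal{E}_\ep$ with $|\mathcal{E}_\ep(z)|\le c\,(1+|z|)^{C}\big(|U(z)|+|\nabla U(z)|+\cdots+|D^4U(z)|\big)+(\text{terms supported on }|z|\gtrsim 1/\ep)$, and then conclude by the weighted integrability just described, deferring the routine term-by-term verification to Section \ref{sec:Appendix}.
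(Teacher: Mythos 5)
Your proposal follows essentially the same route as the paper: write $R_{\ep,\xi}=\Pi_{\ep,\xi}^\perp\io_\ep^*\big(f(W_{\ep,\xi})-V_{\ep,\xi}\big)$ with $V_{\ep,\xi}=\ep^4\Delta_g^2W_{\ep,\xi}-b\ep^2\Delta_gW_{\ep,\xi}+aW_{\ep,\xi}$, reduce via \eqref{norma_ep} to an $L^{(p+1)/p}_{g,\ep}$-estimate, pass to rescaled normal coordinates, use the Taylor expansion of $g_\xi$ to control the difference between the Riemannian and Euclidean operators, and kill the cutoff terms by the exponential decay of $U$ — exactly the structure of Lemma \ref{estimateR} and the Appendix. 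One small correction in your bookkeeping: $B^k(\ep z)=g^{ij}(\ep z)\Gamma^k_{ij}(\ep z)=O(\ep|z|)$, not $O(\ep^2|z|)$, since $\Gamma(0)=0$ but $\partial\Gamma(0)\neq 0$ in general; the $O(\ep^2)$ contribution of the $B$-terms is nonetheless correct because each derivative of $U_\ep$ carries an $\ep^{-1}$, and combining the $\ep^2$ (or $\ep^4$) prefactor with the rescaling still yields a net $\ep^2$.
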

\begin{proof}
As $\io_{\ep}^{*}:L_{g,\ep}^{\frac{p+1}{p}}\rightarrow H^{2}_{\ep}$ is a surjective map, we define the function $V_{\ep,\xi}$ on $M$ by $W_{\ep ,\xi} = \iota_{\varepsilon}^* (V_{\ep,\xi})$  and consider $\tilde{V}_{\ep, \xi} (z) := V_{\ep,\xi}\left(\exp_{\xi}(z)\right) $ on $B(0,r) \subseteq \mathbb{R}^{n}$ . In other words, it satisfies
\begin{align*}
    \tilde{V}_{\ep, \xi} (z) &= \ep^{4}\Delta_{g}^{2} W_{\ep ,\xi}\left(\exp_{\xi}(z)\right) -\ep^2 b \Delta_{g} W_{\ep ,\xi}\left(\exp_{\xi}(z)\right) + a W_{\ep,\xi}\left(\exp_{\xi}(z)\right).
\end{align*}
Denote also  $\tilde{W}_{\ep,\xi}(z)=W_{\varepsilon,\xi}(\exp_{\xi}z) = U_{\varepsilon}(z)\chi_r(z) $, where $v_\ep(z):=v(\frac{\ep}{z})$ (and in consequence $\partial_iU_\ep(z)=\frac{1}{\ep}\partial_i U(\frac{z}{\ep})$).
By Remark \ref{Lapl. in normal coord}, we can compute
\begin{align*}
    \varepsilon^{2}\Delta_{g_\xi} \tilde{W}_{\ep ,\xi}(z) &= \ep^2\Delta \left(U_{\ep} \chi_{r}\right)(z) - \ep^2 A^{ij} \partial^2_{ij} \left(U_{\ep} \chi_{r}\right)(z) + \ep^2 B^{k}\partial_{k} \left(U_{\ep} \chi_{r}\right)(z) \\
     &= \Delta U \left(\frac{z}{\ep}\right)  \chi_{r}(z)  + 2 \ep^2 \partial_{i} U_{\ep}(z)  \partial_{i} \chi_{r}(z)  + \ep^2 U_{\ep} (z) \Delta \chi_{r}(z)  \\
     &\quad + \ep^2 B^{k}\partial_{k} \left(U_{\ep} \chi_{r}\right)(z)
     - \ep^2 A^{ij} \partial^2_{ij} \left(U_{\ep} \chi_{r}\right)(z).
\end{align*}
In the same spirit, we compute
\begin{align*}
    \varepsilon^{4}\Delta_{g_\xi}^2 \tilde{W}_{\ep ,\xi}(z) &= \ep^4\Delta^2 \left(U_{\ep} \chi_{r}\right) (z)- \ep^4\Delta \left(A^{st} \partial_{s t}^{2}  (U_{\ep} \chi_{r}) \right)(z) - \ep^4\Delta \left(B^{h} \partial_{h} (U_{\ep} \chi_{r})\right)(z) \\
    &\quad - \ep^4 A^{ij} \partial_{i j}^{2} \left(\Delta(U_{\ep} \chi_{r})\right)(z) +\ep^4 A^{ij} \partial_{i j}^{2} \left( A^{st} \partial_{s t}^{2}  (U_{\ep} \chi_{r}) )\right) + \ep^4 A^{ij} \partial_{i j}^{2} \left(B^{h} \partial_{h} (U_{\ep} \chi_{r})\right) \\
    &\quad - \ep^4 B^{k} \partial_{k}\left(\Delta (U_{\ep} \chi_{r}) \right)+ \ep^4 B^{k} \partial_{k} \left(A^{st} \partial_{s t}^{2} (U_{\ep} \chi_{r})\right) + \ep^4 B^{k} \partial_{k}\left(B^{h}\partial_{h}\left(U_{\ep} \chi_{r}\right)\right)\\
      &= \Delta^2 U\left(\frac{z}{\ep}\right)\chi_r(z) + 2\ep^4\Delta U_\ep (z) \Delta\chi_{r}(z)+  4\ep^4 \partial^3_{ijj} U_\ep(z)\partial_i \chi_r(z)\\
&\quad+4\ep^4\partial_{ij}^2 U_\ep \partial_{ij}^2\chi_r (z)+ 4\ep^4 U_\ep(z)\partial^3_{ijj} \chi_r(z) +\ep^4 U_\ep(z)\Delta^2 \chi_r(z)\\
      &\quad - \ep^4\Delta \left(A^{st} \partial_{s t}^{2}  (U_{\ep} \chi_{r}) \right)(z) - \ep^4\Delta \left(B^{h} \partial_{h} (U_{\ep} \chi_{r})\right)(z) \\
    &\quad - \ep^4 A^{ij} \partial_{i j}^{2} \left(\Delta(U_{\ep} \chi_{r})\right)(z) +\ep^4 A^{ij} \partial_{i j}^{2} \left( A^{st} \partial_{s t}^{2}  (U_{\ep} \chi_{r}) )\right) + \ep^4 A^{ij} \partial_{i j}^{2} \left(B^{h} \partial_{h} (U_{\ep} \chi_{r})\right) \\
    &\quad - \ep^4 B^{k} \partial_{k}\left(\Delta (U_{\ep} \chi_{r}) \right)+ \ep^4 B^{k} \partial_{k} \left(A^{st} \partial_{s t}^{2} (U_{\ep} \chi_{r})\right) + \ep^4 B^{k} \partial_{k}\left(B^{h}\partial_{h}\left(U_{\ep} \chi_{r}\right)\right).
\end{align*}
Taking into account the fact that the function $U$ satisfies the limit equation and the previous computations, it follows that
\[
    \tilde{V}_{\ep, \xi} (z)=U_\ep^{p}(z) \chi_{r}(z) + \mathcal{R}_\ep(U_{\ep},\chi_{r}),
    \]
    where
\begin{align*}
    \mathcal{R}_\ep(U_{\ep},\chi_{r})&=2\ep^4\Delta U_\ep(z) \Delta\chi_{r}(z)+  4\ep^4 \partial^3_{ijj} U_\ep(z)\partial_i \chi_r(z)\\
&\quad+4\ep^4\partial_{ij}^2 U_\ep(z) \partial_{ij}^2\chi_r (z)+ 4\ep^4 \partial_{i}U_\ep(z)\partial^3_{ijj} \chi_r(z) +\ep^4 U_\ep(z)\Delta^2 \chi_r(z)\\
      &\quad - \ep^4\Delta \left(A^{st} \partial_{s t}^{2}  (U_{\ep} \chi_{r}) \right)(z) - \ep^4\Delta \left(B^{h} \partial_{h} (U_{\ep} \chi_{r})\right)(z) \\
    &\quad - \ep^4 A^{ij} \partial_{i j}^{2} \left(\Delta(U_{\ep} \chi_{r})\right)(z) +\ep^4 A^{ij} \partial_{i j}^{2} \left( A^{st} \partial_{s t}^{2}  (U_{\ep} \chi_{r}) )\right)(z) + \ep^4 A^{ij} \partial_{i j}^{2} \left(B^{h} \partial_{h} (U_{\ep} \chi_{r})\right)(z) \\
    &\quad - \ep^4 B^{k} \partial_{k}\left(\Delta (U_{\ep} \chi_{r}) \right)(z)+ \ep^4 B^{k} \partial_{k} \left(A^{st} \partial_{s t}^{2} (U_{\ep} \chi_{r})\right)(z) + \ep^4 B^{k} \partial_{k}\left(B^{h}\partial_{h}\left(U_{\ep} \chi_{r}\right)\right)(z)\\
    &\quad -2b \ep^2\partial_{i} U_\ep(z) \partial_{i} \chi_{r}(z)  
    - b \ep^2 U_\ep(z) \Delta \chi_{r}(z)  
    -b\ep^2 B^{k}\partial_{k} \left(U_{\ep} \chi_{r}\right)(z)
    +b \ep^2 A^{ij} \partial^2_{ij} \left(U_{\ep} \chi_{r}\right)(z). 
\end{align*}

On the other hand, by \eqref{def_R} and \eqref{norma_ep}, we get there exists a positive constant $C$ such that for $\ep>0$ small and for any point $\xi \in M$, there holds,
\begin{equation}\label{R-estimate1}
\|R_{\ep,\xi}\|_\ep = \|\iota_\ep^*(f(W_{\ep,\xi})-V_{\ep,\xi})-W_{\ep,\xi}+\iota_\ep^*(V_{\ep,\xi})\|_\ep \leq  C |f(W_{\ep, \xi})-V_{\ep,\xi}|_{\frac{p+1}{p},\ep}.
\end{equation}
We need to point out that for some positive constant $c$ we have
\begin{align}\label{R-integral}
  \int_{B_g(\xi,r)}  |f(W_{\ep, \xi})-V_{\ep,\xi}|^\frac{p+1}{p} \ dV_g&\leq
  c\int_{B(0,r)}  |f(\tilde{W}_{\ep, \xi})-\tilde{V}_{\ep,\xi}|^\frac{p+1}{p} dz\\ \nonumber
  &\leq c\int_{B(0,r)} \left|U_\ep^{p}(z) (\chi_{r}^{p}(z) - \chi_{r}(z))\right|^{\frac{p+1}{p}} \ dz\\ \nonumber
  &+ c\int_{B(0,r)} \left|\mathcal{R}_\ep(U_{\ep},\chi_{r})\right|^{\frac{p+1}{p}} \ dz. \nonumber
\end{align}

We are led to estimate each term on the right-hand side in \eqref{R-integral}. 
However, by arguing as in \cite{micheletti2009role},  it is easy to see that
\begin{equation}\label{R-estimate2}
\int_{B(0,r)} \left|\mathcal{R}_\ep(U_{\ep},\chi_{r}) \right|^{\frac{p+1}{p}}\ dz=o(\ep^{n+2\frac{p+1}{p}}).
\end{equation}
We write some of the estimates in the Section \ref{sec:Appendix}.
Hence, from \eqref{R-estimate1}, \eqref{R-integral} and \eqref{R-estimate2}, the following estimates hold
\begin{align*}
    \|R_{\ep,\xi}\|_\ep &\leq  \frac{C}{\ep^{n}} \int_{B(0,r)} \left|U_\ep^{p}(z) (\chi_{r}^{p}(z) - \chi_{r}(z))\right|^{\frac{p+1}{p}} dz + o(\ep^{2\frac{p+1}{p}}).
\end{align*}
Using again  the exponential decay of $U$ and the definition of $\chi_r$, we have
\[
\int_{B(0, r)}\left|U_{\varepsilon}^{p}(z)\left(\chi_{r}^{p}(z)-\chi_{r}(z)\right)\right|^{\frac{p+1}{p}} d z \leq \int_{B(0, r) \backslash B(0, r / 2)} U_{\varepsilon}^{p}(z) d z\leq o\left(\varepsilon^{n+2 (p+1)/p}\right).
\]
Since $1<p<p^\sharp$ then $\ep^{2\frac{p+1}{p}}<\ep^2$ and we get the estimation claimed. 
\end{proof}

Next, we prove the main result of this section.

\begin{proposition}\label{prop1}  
    There exists $\varepsilon_{0}>0$ and $c>0$ such that for any $\xi \in  M$ and for all $\varepsilon \in\left(0, \varepsilon_{0}\right)$ there exists a unique $\phi_{\varepsilon, \xi}=\phi(\varepsilon, \xi) \in K_{\varepsilon, \xi}^{\perp}$ which solves \eqref{Pii}. Moreover,
$$
\left\|\phi_{\varepsilon, \xi}\right\|_{\varepsilon}<c \varepsilon^2,
$$
and $\xi \mapsto \phi_{\varepsilon, \xi}$ is a $C^{1}$ map.
\end{proposition}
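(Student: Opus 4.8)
The plan is to recast \eqref{Pii} as a fixed-point problem for $\phi\in K_{\varepsilon,\xi}^{\perp}$, solve it by the contraction mapping principle using the two estimates already established, and then upgrade the dependence on $\xi$ via the Implicit Function Theorem. Since Proposition \ref{invertible} gives that $L_{\varepsilon,\xi}:K_{\varepsilon,\xi}^{\perp}\to K_{\varepsilon,\xi}^{\perp}$ is invertible with $\|L_{\varepsilon,\xi}^{-1}\|$ bounded uniformly in $\xi\in M$ and $\varepsilon\in(0,\varepsilon_{0})$, equation \eqref{Pii}, i.e. $L_{\varepsilon,\xi}(\phi)=N_{\varepsilon,\xi}(\phi)+R_{\varepsilon,\xi}$, is equivalent to
\[
\phi=T_{\varepsilon,\xi}(\phi):=L_{\varepsilon,\xi}^{-1}\bigl(N_{\varepsilon,\xi}(\phi)+R_{\varepsilon,\xi}\bigr),\qquad \phi\in K_{\varepsilon,\xi}^{\perp}.
\]

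First I would record the nonlinear estimates for $N_{\varepsilon,\xi}$. In the range of exponents relevant here one has $1<p<\tfrac{n+4}{n}<2$ (recall $n\ge 5$, and that the non-degeneracy of $U$ used in Proposition \ref{invertible} forces $1<p<\tfrac{n+4}{n}$), so with $f(t)=t_{+}^{p}$ the elementary inequality $|f(a+b)-f(a)-f'(a)b|\le C|b|^{p}$ holds for all $a,b\in\mathbb{R}$, and $f'$ is $(p-1)$-Hölder continuous. Combining these with the bound \eqref{norma_ep} on $\iota_{\varepsilon}^{*}$, the Sobolev inequality \eqref{norma_q} and Hölder's inequality, one obtains, uniformly in $\xi$ and $\varepsilon$ small,
\[
\|N_{\varepsilon,\xi}(\phi)\|_{\varepsilon}\le c\,|\phi|_{p+1,\varepsilon}^{\,p}\le c\,\|\phi\|_{\varepsilon}^{\,p},
\qquad
\|N_{\varepsilon,\xi}(\phi_{1})-N_{\varepsilon,\xi}(\phi_{2})\|_{\varepsilon}\le c\bigl(\|\phi_{1}\|_{\varepsilon}+\|\phi_{2}\|_{\varepsilon}\bigr)^{p-1}\|\phi_{1}-\phi_{2}\|_{\varepsilon}.
\]

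Next comes the contraction argument on the ball $\mathcal{B}_{\varepsilon}:=\{\phi\in K_{\varepsilon,\xi}^{\perp}:\|\phi\|_{\varepsilon}\le C_{0}\varepsilon^{2}\}$, with $C_{0}$ to be fixed. By Proposition \ref{invertible}, Lemma \ref{estimateR} and the two estimates above, for $\phi,\phi_{1},\phi_{2}\in\mathcal{B}_{\varepsilon}$,
\[
\|T_{\varepsilon,\xi}(\phi)\|_{\varepsilon}\le c\bigl(\|N_{\varepsilon,\xi}(\phi)\|_{\varepsilon}+\|R_{\varepsilon,\xi}\|_{\varepsilon}\bigr)\le c\,(C_{0}\varepsilon^{2})^{p}+c\,\varepsilon^{2},
\qquad
\|T_{\varepsilon,\xi}(\phi_{1})-T_{\varepsilon,\xi}(\phi_{2})\|_{\varepsilon}\le c\,(2C_{0}\varepsilon^{2})^{p-1}\|\phi_{1}-\phi_{2}\|_{\varepsilon}.
\]
Since $p>1$, the first right-hand side is $C_{0}\varepsilon^{2}+o(\varepsilon^{2})$ once $C_{0}$ is chosen large, and the contraction factor is $o(1)$; hence for $\varepsilon$ small $T_{\varepsilon,\xi}$ maps $\mathcal{B}_{\varepsilon}$ into itself and is a $\tfrac{1}{2}$-contraction, so the Banach fixed-point theorem produces a unique $\phi_{\varepsilon,\xi}\in\mathcal{B}_{\varepsilon}$ solving \eqref{Pii}. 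Inserting this solution back into $\phi_{\varepsilon,\xi}=T_{\varepsilon,\xi}(\phi_{\varepsilon,\xi})$ and absorbing $\|N_{\varepsilon,\xi}(\phi_{\varepsilon,\xi})\|_{\varepsilon}\le c\|\phi_{\varepsilon,\xi}\|_{\varepsilon}^{p-1}\|\phi_{\varepsilon,\xi}\|_{\varepsilon}=o(1)\|\phi_{\varepsilon,\xi}\|_{\varepsilon}$ yields the sharper bound $\|\phi_{\varepsilon,\xi}\|_{\varepsilon}<c\,\varepsilon^{2}$; the same contraction estimate also gives uniqueness in the a priori larger class of $\phi$ with $\|\phi\|_{\varepsilon}=o(1)$.

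Finally, for the $C^{1}$ regularity of $\xi\mapsto\phi_{\varepsilon,\xi}$ I would apply the Implicit Function Theorem to
\[
\mathcal{G}(\varepsilon,\xi,\phi):=\Pi_{\varepsilon,\xi}^{\perp}\bigl\{W_{\varepsilon,\xi}+\phi-\iota_{\varepsilon}^{*}\bigl(f(W_{\varepsilon,\xi}+\phi)\bigr)\bigr\},
\]
viewed on $H^{2}_{\varepsilon}$ after the standard local identification of the spaces $K_{\varepsilon,\xi}^{\perp}$ (equivalently, by adding the $K_{\varepsilon,\xi}$-component of $\phi$ to make $\mathcal{G}$ a map on all of $H^{2}_{\varepsilon}$). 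Because $M$ is analytic, the generators $Z^{i}_{\varepsilon,\xi}$ and $W_{\varepsilon,\xi}$, hence the projections $\Pi_{\varepsilon,\xi}$ and $\Pi_{\varepsilon,\xi}^{\perp}$, depend smoothly on $\xi$ through the exponential map, and $f$ is $C^{1}$ on the range involved, so $\mathcal{G}$ is $C^{1}$; its differential in $\phi$ at $(\varepsilon,\xi,\phi_{\varepsilon,\xi})$ equals $L_{\varepsilon,\xi}$ plus a correction of operator norm $O(\|\phi_{\varepsilon,\xi}\|_{\varepsilon}^{p-1})=o(1)$, hence is invertible for $\varepsilon$ small by Proposition \ref{invertible}, and the Implicit Function Theorem gives the claim. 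The only genuine difficulty in the whole argument is bookkeeping: keeping every constant in the nonlinear estimates and in the Sobolev and $\iota_{\varepsilon}^{*}$ bounds independent of both $\varepsilon$ and $\xi$ — which is exactly where the compactness of $M$ and the uniform statements of Proposition \ref{invertible} and Lemma \ref{estimateR} are used — and handling the $\xi$-dependence of the spaces $K_{\varepsilon,\xi}^{\perp}$ cleanly when invoking the Implicit Function Theorem.
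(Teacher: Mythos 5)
Your proposal is correct and follows essentially the same route as the paper: invert $L_{\varepsilon,\xi}$ via Proposition \ref{invertible}, set up the fixed-point operator $T_{\varepsilon,\xi}=L_{\varepsilon,\xi}^{-1}(N_{\varepsilon,\xi}+R_{\varepsilon,\xi})$, run a contraction on a ball of radius $\sim\varepsilon^{2}$ using Lemma \ref{estimateR}, and then invoke the Implicit Function Theorem for the $C^{1}$ dependence on $\xi$. Two small points where your write-up is actually tighter than the paper's: (i) you observe explicitly that in this problem $p<\frac{n+4}{n}<2$ always holds, so only the $p<2$ branch of the pointwise inequality for $f'$ is ever needed, and you record the nonlinear estimate as $\|N_{\varepsilon,\xi}(\phi)\|_{\varepsilon}\le c\,\|\phi\|_{\varepsilon}^{p}$ — this is the estimate one actually needs to close the ball-invariance $\|T_{\varepsilon,\xi}(\phi)\|_{\varepsilon}\le c\varepsilon^{2p}+c\varepsilon^{2}\le C_{0}\varepsilon^{2}$, whereas the paper's stated bound involving $\|\phi\|_{\varepsilon}^{p-1}$ would not close on a ball of radius $O(\varepsilon^{2})$ when $p<2$ and is best read as a typographical slip for $\|\phi\|_{\varepsilon}^{p}$; (ii) for the IFT step you apply it to the natural map $\mathcal{G}(\varepsilon,\xi,\phi)=\Pi_{\varepsilon,\xi}^{\perp}\{W_{\varepsilon,\xi}+\phi-\iota_{\varepsilon}^{*}(f(W_{\varepsilon,\xi}+\phi))\}$ with target $H^{2}_{\varepsilon}$ and say a word about the $\xi$-dependence of the subspaces $K_{\varepsilon,\xi}^{\perp}$, which the paper skips over. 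Both are improvements in exposition rather than in substance; the argument and the conclusion are the same.
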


\begin{proof} We argue exactly as in \cite[Proposition 3.5]{micheletti2009role}. For the reader's convenience, we briefly recall the main steps. 
We solve \eqref{Pii} by a fixed point argument. 
    Recall that equation \eqref{Pii} is equivalent to
\begin{equation}\label{L_ep}
    L_{\varepsilon, \xi}(\phi) = N_{\varepsilon, \xi}(\phi) + R_{\varepsilon,\xi}. 
\end{equation}
We define the operator $T_{\varepsilon, \xi} : K_{\varepsilon, \xi}^{\perp} \rightarrow K_{\varepsilon, \xi}^{\perp}$ by
$$
T_{\varepsilon, \xi}(\phi) =L_{\varepsilon, \xi}^{-1}\left(N_{\varepsilon, \xi}(\phi)+R_{\varepsilon, \xi}\right).
$$
By Proposition \ref{invertible}, $T_{\varepsilon, \xi}$ is well defined and it holds
\begin{equation*}
\left\|T_{\varepsilon, \xi}(\phi)\right\|_{\varepsilon} \leq c\left(\left\|N_{\varepsilon, \xi}(\phi)\right\|_{\varepsilon}+\left\|R_{\varepsilon, \xi}\right\|_{\varepsilon}\right) 
\end{equation*}
for some constant $c>0$. Moreover, by the linearity of $L_{\varepsilon, \xi}^{-1}$, the properties of $\iota^*$, and the Mean Value Theorem we get
\begin{align*}
\left\|T_{\varepsilon, \xi}\left(\phi_{1}\right)-T_{\varepsilon, \xi}\left(\phi_{2}\right)\right\|_{\varepsilon} & \leq c\left(\left\|N_{\varepsilon, \xi}\left(\phi_{1}\right)-N_{\varepsilon, \xi}\left(\phi_{2}\right)\right\|_{\varepsilon}\right)\\
&=c \ \Pi_{\varepsilon, \xi}^{\perp} \left\{ \iota_{\varepsilon}^{*} \left[f(W_{\varepsilon,\xi} + \phi_1) - f(W_{\varepsilon,\xi} + \phi_1)  -f'(W_{\varepsilon,\xi} ) \phi_1 + f'(W_{\varepsilon,\xi} ) \phi_2 \right]  \right\} \\
&\leq c\left|f(W_{\varepsilon,\xi} + \phi_1) - f(W_{\varepsilon,\xi} + \phi_2)  -f'(W_{\varepsilon,\xi} ) \phi_1 + f'(W_{\varepsilon,\xi} ) \phi_2\right|_{\frac{p+1}{p},\ep}\\
& \leq c\left|f^{\prime}\left(W_{\varepsilon, \xi}+\phi_{2}+t\left(\phi_{1}-\phi_{2}\right)\right)-f^{\prime}\left(W_{\varepsilon, \xi}\right)\right|_{\frac{p+1}{p},\ep}\left\|\phi_{1}-\phi_{2}\right\|_{\frac{p+1}{p},\ep}\\
& \leq c\left|f^{\prime}\left(W_{\varepsilon, \xi}+\phi_{2}+t\left(\phi_{1}-\phi_{2}\right)\right)-f^{\prime}\left(W_{\varepsilon, \xi}\right)\right|_{\frac{p+1}{p},\ep}\left\|\phi_{1}-\phi_{2}\right\|_{\varepsilon}.
\end{align*}
Here, $c$ denotes any positive constant. 

\begin{equation*}
\begin{array}{lcc}
    \mid 
 f(W_{\ep,\xi}+\phi_1)-f(W_{\ep,\xi}+\phi_2)-f'(W_{\ep,\xi})(\phi_1-\phi_2) \mid_{\frac{p+1}{p},\ep} \\
 \leq C \mid 
 (f'(W_{\ep,\xi}+\phi_2 + \tau(\phi_1-\phi_2))-f'(W_{\ep,\xi}))(\phi_1-\phi_2) \mid_{\frac{p+1}{p},\ep} \\
  \leq C \mid 
 f'(W_{\ep,\xi}+\phi_2 + \tau(\phi_1-\phi_2))-f'(W_{\ep,\xi})\mid_{\frac{p+1}{p-1},\ep}\mid \phi_1-\phi_2 \mid_{\frac{p+1}{p},\ep}. \\
\end{array}
\end{equation*}

Recall (from \cite[Lemma 2.2]{yanyan1998}) that for all $a>0, \ b \in \RR$, we have
\begin{equation*}
	\left|\lvert a +  b\rvert^q- a^q\right| \le 
	\left\{
	\begin{array}{rcl}
	& 	C(q)\min\left\{\lvert b\rvert^q, \ a^{q-1}\lvert b\rvert\right\} \quad &\text{if } q\in\left(0,1\right)\\
	&	C(q)\left( a^{q-1}\lvert b\rvert + \lvert b \rvert^q \right)\quad &\text{if } q\geq 1,
	\end{array}
	\right.
\end{equation*}
	which implies
\begin{equation}\label{ineq:f'(W)}
	\lvert f'(W_{\ep,\xi}+\phi)-f'(W_{\ep,\xi})\rvert \le 
	\left\{
	\begin{array}{rcl}
	& 	C(q)\lvert \phi\rvert^{p-1} \quad &\text{if } p\in\left(1,2\right)\\
	& C(q) \left(	W_{\ep,\xi}^{p-2}\lvert\phi\rvert + \lvert\phi\rvert^{p-2} \right) \quad &\text{if } p\geq 2.
	\end{array}
	\right.
		\end{equation}
  Moreover, it is easy to check that
\[
\lvert \lvert \phi\rvert^{p-1}\rvert_{\frac{p+1}{p}}\leq c \lvert\phi\rvert^{p-1}_{p,\ep},
\]
  and, if $p\geq 2$
  \[
  \lvert W_{\ep, \xi}^{p-2}\phi^2\rvert_{\frac{p+1}{p},\ep}\lvert\phi\rvert_{p+1,\ep}^2\leq C\|\phi\|_\ep^2.
\]
From these facts and from \eqref{ineq:f'(W)}, it follows that 

\begin{equation*}
\lvert f'(W_{\ep,\xi}+\phi_2 + \tau(\phi_1-\phi_2))-f'(W_{\ep,\xi})\rvert_{\frac{p+1}{p-1},\ep} 
 \leq C \Vert \phi_1 -\phi_2\Vert_\ep.
\end{equation*}

  Moreover, with the same estimates, we get
$$
\left\|N_{\varepsilon, \xi}(\phi)\right\|_{\varepsilon} \leq c\left(\|\phi\|_{\varepsilon}^{2}+\|\phi\|_{\varepsilon}^{p-1}\right).
$$
So, we deduce

\begin{equation}
\label{fixed}
\begin{array}{lcc}
\Vert T_{\ep,\xi}(\phi_1) -  T_{\ep,\xi}(\phi_2)\Vert_\ep 
\leq \Vert N_{\ep,\xi}(\phi_1)- N_{\ep,\xi}(\phi_2)\Vert_\ep \leq c \Vert \phi_1 -\phi_2\Vert_\ep,\\
\end{array}
\end{equation}
for $c \in (0,1)$, provided $\|\phi_1\|_{\ep }$ and $\|\phi_2\|_{\ep }$ are small enough.
This fact, combined with Lemma \ref{estimateR}, gives us

\begin{equation*}
    \left\|T_{\varepsilon, \xi}(\phi)\right\|_{\varepsilon} \leq c\left(\left\|N_{\varepsilon, \xi}(\phi)\right\|_{\varepsilon}+\left\|R_{\varepsilon, \xi}\right\|_{\varepsilon}\right) \leq c\left(\|\phi\|_{\varepsilon}^{2}+\|\phi\|_{\varepsilon}^{p-1}+\varepsilon^2\right) .
\end{equation*}

So, there exists $c\in(0,1)$ such that $T_{\varepsilon, \xi}$ maps a ball of center 0 and radius $c \varepsilon^2$ in $K_{\varepsilon, \xi}^{\perp}$ into itself and it is a contraction. So, there exists a fixed point $\phi_{\varepsilon, \xi}$ with the norm $\left\|\phi_{\varepsilon, \xi}\right\|_{\varepsilon} \leq \varepsilon^2$.

Finally, to prove that the map $\xi \rightarrow \phi_{\ep ,\xi}$ is in fact a $C^1$ map, we use the Implicit Function Theorem applied to the function
$$
G : M \times H_{\varepsilon}^2\longrightarrow \mathbb{R},\;(\xi, u) \longmapsto
G(\xi, u):= T_{\ep,\xi}(u)- u.
$$

As mentioned previously, equation (\ref{fixed}) ensures the existence of a $\phi_{\ep ,\xi}$ such that $G(\xi, \phi_{\ep ,\xi})=0$. Moreover, when restricted to a sufficiently small ball, $T_{\ep,\xi}(\phi)$ is differentiable and has a differentiable inverse $L_{\ep,\xi}(\phi)$, by  \eqref{L_ep}. Consequently, according to the Implicit Function Theorem, the mapping $\xi \rightarrow \phi_{\ep,\xi}$ is $C^1$.

We have that $G\left(\xi, \phi_{\varepsilon, \xi}\right)=0$ and that the operator $\frac{\partial G}{\partial u} \left(\xi, \phi_{\varepsilon, \xi}\right): H_{\varepsilon}^2\rightarrow H_{\varepsilon}^2$ is invertible. This concludes the proof.
\end{proof} 


\section{Asymptotic Expansion}\label{sec:Expansion}

In this section, we will prove some important properties regarding the functional $
J_{\varepsilon}:H_\ep^2\to\RR$, defined by
$$
J_{\varepsilon} (u) := \frac{1}{\varepsilon^{n}}  \int_{M} \left(\dfrac{\varepsilon^4}{2} |\Delta_{g} u(x) |^{2} + \dfrac{\varepsilon^2}{2} b|\nabla  u(x)|^{2} + \dfrac{a}{2} |u(x)|^{2}  - \frac{1}{(p+1)} (u_+(x))^{p+1} \right) dV_{g}(x),      
$$
where $u_+ (x) = \max\{u(x), 0\}$.
It is well known that any critical point of the functional $J_{\varepsilon}$ corresponds to a solution of the problem (\ref{equivalent problem}).

\begin{proposition}\label{prop2}
    For $\xi \in M$, and $\ep>0$ small, the following expansion holds.
    $$
        J_{\varepsilon} (W_{\varepsilon,\xi}) = \alpha\ + \ \beta \varepsilon^2 \ \tau_g(\xi) \ + \ o(\varepsilon^{2}),
    $$
    where
    \begin{equation}\label{alfa}
	\alpha:= \frac{1}{2} \displaystyle\int_{\mathbb{R}^n} \left(|\Delta U(z)|^2  + |\nabla U(z)|^2  + U^2(z) \right) \ dz - \frac{1}{p+1} \displaystyle\int_{\mathbb{R}^n} U^{p+1}(z) \ dz,
	\end{equation}
 \begin{equation}\label{beta}
	\beta:= \displaystyle\int_{\mathbb{R}^n} \left(\frac{U'(|z|)}{|z|} - U''(|z|) \right)^2  z_1^2 \ z_2^2\ dz + \frac{b}{2}\displaystyle\int_{\mathbb{R}^n} U'(|z|)^{2}z_1^2\ z_2^2 \ dz,
	\end{equation}
 and $\tau_g$ as in \eqref{tau}.
\end{proposition}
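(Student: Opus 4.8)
The plan is to compute $J_\ep(W_{\ep,\xi})$ by passing to normal coordinates at $\xi$, rescaling by $\ep$, and expanding every geometric quantity (the metric coefficients $g_\xi^{ij}$, the volume density $\sqrt{|g_\xi|}$, and the Laplacian $\Delta_{g_\xi}$) to second order in $\ep$ using Lemma \ref{lema:Taylor} and Remark \ref{Lapl. in normal coord}. Writing $\tilde W_{\ep,\xi}(z)=U_\ep(z)\chi_r(z)=U(z/\ep)\chi_r(z)$, each of the three quadratic terms in $J_\ep$ becomes an integral over $B(0,r)$ against $\sqrt{|g_\xi(z)|}\,dz$; after the change of variables $z\mapsto \ep z$ the domain becomes $B(0,r/\ep)$ which exhausts $\RR^n$, and the cutoff $\chi_r(\ep z)\to 1$ pointwise. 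The leading order gives exactly $\alpha$ (using that $U$ solves the limit equation \eqref{limit} to rewrite $\int U^{p+1}$ and combine terms), and the point of the computation is to extract the $O(\ep^2)$ correction and identify its coefficient as $\beta\,\tau_g(\xi)$.

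First I would handle the zeroth-order term. By \eqref{chi-decay} and the exponential decay \eqref{decay} of $U$ and its derivatives, replacing $\chi_r(\ep z)$ by $1$ and extending the integration domain to $\RR^n$ costs only $o(\ep^2)$ (in fact $O(\ep^\infty)$, since the tails of $U$ beyond $|z|\sim r/\ep$ are exponentially small); similarly all cross terms in $\Delta(U_\ep\chi_r)$, $\nabla(U_\ep\chi_r)$ involving derivatives of $\chi_r$ are supported in $B(0,r)\setminus B(0,r/2)$ and hence $o(\ep^2)$. This reduces $J_\ep(W_{\ep,\xi})$, modulo $o(\ep^2)$, to
\[
\frac{1}{2}\int_{\RR^n}\!\Big(\ep^4|\Delta_{g_\xi}U_\ep|^2+\ep^2 b|\nabla_{g_\xi}U_\ep|^2+aU_\ep^2\Big)\sqrt{|g_\xi(\ep z)|}\,dz\cdot\ep^{-n}\ \text{-type expression} -\frac{1}{p+1}\int_{\RR^n}U_\ep^{p+1}\sqrt{|g_\xi|},
\]
and after rescaling everything is an integral over $\RR^n$ in the variable $z$ with $U$ (not $U_\ep$). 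The next step is the crucial one: expand $\sqrt{|g_\xi(\ep z)|}=1-\frac{\ep^2}{4}\sum_{l,r,k}\partial^2_{r k}g_\xi^{ll}(0)z_rz_k+O(\ep^3|z|^3)$ and $g_\xi^{ij}(\ep z)=\delta_{ij}+\frac{\ep^2}{2}\sum_{r,k}\partial^2_{rk}g_\xi^{ij}(0)z_rz_k+O(\ep^3|z|^3)$, insert these into the gradient term $g_\xi^{ij}\partial_i U\partial_j U$ and into $\Delta_{g_\xi}U$ (using the $A^{ij},B^k$ of Remark \ref{Lapl. in normal coord}, noting $A^{ij}=O(\ep^2|z|^2)$ and $B^k=O(\ep^2|z|)$ after rescaling since $\partial g_\xi^{ij}(0)=0$), and collect all $O(\ep^2)$ contributions.

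The bookkeeping here is the main obstacle: one must show that the collected $\ep^2$-coefficient, which a priori is a sum of integrals of the form $\int_{\RR^n}(\cdots)\,\partial^2_{rk}g_\xi^{ij}(0)\,z_r z_k\,dz$, collapses — using the radial symmetry of $U$ (so that $\int (\text{radial})z_rz_k=0$ unless $r=k$, and $\int(\text{radial})z_1^2$ can be replaced by $\frac{1}{n}\int(\text{radial})|z|^2$, etc.) and the identity $\sum_i\partial^2_{rk}g_\xi^{ii}(0)=\sum_i\partial^2_{ri}g_\xi^{ik}(0)=\cdots$ coming from Ricci-flatness \eqref{def:tau} — precisely to a single scalar $\tau_g(\xi)$ times a universal constant. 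Concretely, after using $\nabla U(z)=U'(|z|)z/|z|$ one gets the Hessian-type combination $\big(U'(|z|)/|z|-U''(|z|)\big)^2 z_1^2z_2^2$ from the bi-Laplacian term and $U'(|z|)^2 z_1^2 z_2^2$ from the gradient term, which is exactly the shape of $\beta$ in \eqref{beta}; the contribution of the volume-density expansion and of the $\int U^{p+1}$ term, together with the second-order part of the operator, must be shown to reorganize into this same $\beta\,\tau_g(\xi)$ after integration by parts and the limit equation. Since the analogous computation for the second-order (Yamabe) case is carried out in \cite{micheletti2009role}, I would follow that template, the new feature being the bi-Laplacian term $\ep^4|\Delta_{g_\xi}U_\ep|^2$, whose second-order expansion I would treat by writing $\Delta_{g_\xi}U_\ep=\Delta U_\ep + (A^{ij}\partial^2_{ij}+B^k\partial_k)U_\ep$ and expanding $|\Delta_{g_\xi}U_\ep|^2=|\Delta U_\ep|^2+2\Delta U_\ep\,(A^{ij}\partial^2_{ij}+B^k\partial_k)U_\ep+O(\ep^4\cdot\ep^2)$, keeping only the $O(\ep^2)$ cross term. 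Controlling the error terms $O(\ep^3|z|^3)$ against the exponential decay of $U$ to show they are genuinely $o(\ep^2)$ is routine but must be stated; I would defer the most tedious of these estimates to Section \ref{sec:Appendix} as the authors indicate.
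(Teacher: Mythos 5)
Your proposal is correct and follows essentially the same route as the paper: pass to normal coordinates, rescale, Taylor-expand $\sqrt{|g_\xi|}$ and $g_\xi^{ij}$ via Lemma \ref{lema:Taylor}, discard cutoff/tail contributions using the exponential decay of $U$, and collapse the resulting $\ep^2$-coefficient to $\beta\,\tau_g(\xi)$ through the radial symmetry of $U$ and the Ricci-flatness identity \eqref{def:tau} (the paper's Lemmas \ref{LemmaAux1} and \ref{LemmaAux2}). The only structural detail you leave implicit is that in the paper the volume-density contributions to $(\Delta U)^2$, $|\nabla U|^2$, $U^2$ and $U^{p+1}$ cancel among themselves rather than entering $\beta$, so that $\beta$ comes purely from the $\ep^2$-part of $g_\xi^{ij}$ inside the Laplacian and gradient; but your phrasing is compatible with this and the overall argument is the same.
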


\begin{proof}
    We will start by estimating 
    $    \varepsilon^{4-n} \int_{M} |\Delta_{g} W_{\varepsilon,\xi}(x) |^{2} \ dV_{g}(x).
    $
Straightforward calculations show 
\begin{align*}
  &\ep^{4-n}\int_{M} |\Delta_{g} W_{\varepsilon,\xi}(x) |^{2}  dV_{g}(x)   \\
    &=  \ep^{4-n}\int_{B(0,r)}  \frac{1}{\sqrt{|g_{\xi}(z)|}} \left[\sum_{i,j=1}^{n} \partial_{i} \left( g_{\xi}^{i j}(z) \sqrt{|g_{\xi}(z)|} \left( \frac{1}{\varepsilon} \partial_{j}  U\left(\frac{z}{\varepsilon}\right) \chi_r(z) + U\left(\frac{z}{\varepsilon}\right)  \partial_{j} \chi_r (z)\right) \right)\right]^{2}  \ dz\\
    &\quad =  \ep^{4-n}\int_{B(0,r)}  \left[ \frac{1}{\sqrt{|g_{\xi}(z)|}} \sum_{i,j=1}^{n} \partial_{i} \left( g_{\xi}^{i j}(z) \sqrt{|g_{\xi}(z)|} \right) \left( \frac{1}{\varepsilon} \partial_{j}  U\left(\frac{z}{\varepsilon}\right) \chi_r(z) + U\left(\frac{z}{\varepsilon}\right)  \partial_{j} \chi_r (z)\right) \right.\\
    &\quad  +\left. \frac{1}{\sqrt{|g_{\xi}(z)|}} \sum_{i,j=1}^{n}  g_{\xi}^{i j}(z) \sqrt{|g_{\xi}(z)|} \,\,\partial_{i} \left( \frac{1}{\varepsilon} \partial_{j}  U\left(\frac{z}{\varepsilon}\right) \chi_r(z) + U\left(\frac{z}{\varepsilon}\right)  \partial_{j} \chi_r (z)\right) \right]^{2}  \sqrt{|g_{\xi}(z)|}\ dz
    \end{align*}
    \begin{align*}
     &\quad  =  \ep^{4-n}\int_{B(0,r)}  \left[ \frac{1}{\sqrt{|g_{\xi}(z)|}} \sum_{i,j=1}^{n} \partial_{i} \left( g_{\xi}^{i j}(z) \sqrt{|g_{\xi}(z)|} \right)
     \left( \frac{1}{\varepsilon} \partial_{j}  U\left(\frac{z}{\varepsilon}\right) \chi_r(z) + U\left(\frac{z}{\varepsilon}\right)  \partial_{j} \chi_r (z)\right) \right.\\    &\quad  +\left.  \sum_{i,j=1}^{n}  g_{\xi}^{i j}(z) \,\, \left( \frac{1}{\varepsilon^2} \partial^2_{ij}  U\left(\frac{z}{\varepsilon}\right) \chi_r(z)+\frac{2}{\varepsilon} \partial_{j}  U\left(\frac{z}{\varepsilon}\right)\partial_i \chi_r(z)  + U\left(\frac{z}{\varepsilon}\right)  \partial^2_{ij} \chi_r (z)\right) \right]^{2}  \sqrt{|g_{\xi}(z)|}\ dz\\
     &\quad  =  \ep^{4-n}\int_{B(0,r)}  \left[\frac{1}{\sqrt{|g_{\xi}(z)|}} \sum_{i,j=1}^{n}  \partial_{i} \left( g_{\xi}^{i j}(z) \sqrt{|g_{\xi}(z)|} \right)
     \left( \frac{1}{\varepsilon} \partial_{j}  U\left(\frac{z}{\varepsilon}\right) \chi_r(z)\right)\right. \\
    &\quad \left.+ \sum_{i,j=1}^{n} g_{\xi}^{i j}(z) \,\, \left( \frac{1}{\varepsilon^2} \partial^2_{ij}  U\left(\frac{z}{\varepsilon}\right) \chi_r(z)\right) \right]^{2}  \sqrt{|g_{\xi}(z)|}\ dz+ o(\ep^{2}),\\
\end{align*}
\noindent
where the error term arises from disregarding the derivatives of $\chi_{r}$. By implementing a variable transformation, we obtain
\begin{align*}
&\ep^{4-n}\int_{M} |\Delta_{g} W_{\varepsilon,\xi}(x)|^{2} \  dV_{g}(x) \\
           &  = \ep^{4-n}\int_{B(0,r)} \left[ \sum_{i,j=1}^{n}\frac{1}{\sqrt{|g_{\xi}(z)|}}  \partial_{i} \left( g_{\xi}^{i j}(z) \sqrt{|g_{\xi}(z)|} \right)
          \left( \frac{1}{\varepsilon} \partial_{j}  U\left(\frac{z}{\varepsilon}\right) \chi_r(z)\right)\right]^2 \sqrt{|g_{\xi}(z)|} \ dz\\
            &\quad   +2 \ep^{4-n}\int_{B(0,r)}  \left[\sum_{i,j=1}^{n} \frac{1}{\sqrt{|g_{\xi}(z)|}}  \partial_{i} \left( g_{\xi}^{i j}(z) \sqrt{|g_{\xi}(z)|} \right)\left( \frac{1}{\varepsilon} \partial_{j}  U\left(\frac{z}{\varepsilon}\right) \chi_r(z)\right) \right]\times\\
          &
        \quad\left[\sum_{i,j=1}^{n}\frac{1}{\varepsilon^2} g_{\xi}^{i j}(z) \,\,    \partial^2_{ij}  U\left(\frac{z}{\varepsilon}\right) \chi_r(z)  \right] \sqrt{|g_{\xi}(z)|} \ dz \\
          &\quad 
          +\ep^{4-n}\int_{B(0,r)} \left[ \sum_{i,j=1}^{n}\frac{1}{\varepsilon^2}  \,g_{\xi}^{i j}(z)  \partial^2_{ij}  U\left(\frac{z}{\varepsilon}\right) \chi_r(z) \right]^{2}\,  \sqrt{|g_{\xi}(z)|}\ dz+ o(\ep^{2})
           \end{align*}
            \begin{align*}
          &= \int_{B(0,r/\ep)} \left( \sum_{i,j=1}^{n} g_{\xi}^{i j}( \ep z) \partial^{2}_{i j} U(z) \chi_{r}(\ep z)  \right)^{2} \sqrt{|g_{\xi}(\ep z)|} \ dz+ o(\ep^{2}).
          \end{align*}
          
    Here, the final term is the sole term with an order greater than $\ep^2$. Consequently,  from the Taylor expansions in Lemma \ref{lema:Taylor} we obtain
          \begin{align*}
          &\ep^{4-n}\int_{M} |\Delta_{g} W_{\varepsilon,\xi}(x)|^{2} \  dV_{g}(x) \\
          &= \int_{B(0,r/\ep)} \left(  \Delta U(z) + \frac{\ep^{2}}{2} \sum_{i,j,r,k=1}^{n} \frac{\partial^{2} g_{\xi}^{i j} }{\partial z_{r} \partial z_{k}} (0) z_{r} z_{k} \partial_{ij}^{2} U(z) \right)^{2} \chi_{r}^{2}(\ep z) \\
          &\quad\times \left( 1 -  \frac{\ep^{2}}{4} \sum_{l,r,k=1}^{n} \frac{\partial^{2} g_{\xi}^{l l} }{\partial z_{r} \partial z_{k}} (0) z_{r} z_{k}  \right) \ dz+ o(\ep^{2}) \\
          &= \int_{\RR^{n}} \left(  \Delta U(z) \right)^{2}\ dz+\ep^{2}  \int_{\RR^{n}}  \Delta U(z)  \left( \sum_{i,j,r,k=1}^{n}  \frac{\partial^{2} g_{\xi}^{i j} }{\partial z_{r} \partial z_{k}} (0) z_{r} z_{k} \partial_{ij}^{2} U(z)\right. \\
          &\quad- \left.\frac{1}{4} \sum_{l,r,k=1}^{n} \frac{\partial^{2} g_{\xi}^{l l} }{\partial z_{r} \partial z_{k}} (0) z_{r} z_{k}\Delta U(z) \right) \ dz+ o(\ep^{2}).
\end{align*}

To deal with the remaining terms, we proceed as in \cite[Lemma 5.3]{micheletti2009role}, and we have
\[
\begin{aligned}
&\ep^{2-n}\frac{b}{2}\int_{M}\left|\nabla  W_{\varepsilon, \xi}\right|^{2} \ dV_g
 =\frac{b}{2} \int_{\mathbb{R}^{n}}|\nabla U|^{2} \ d z+\frac{\varepsilon^{2}b}{4} \sum_{i, j, h, k=1}^{n} \frac{\partial^{2} g_{\xi}^{i j}}{\partial z_{h} \partial z_{k}}(0) \int_{\mathbb{R}^{n}}\left(\frac{U^{\prime}(|z|)}{|z|}\right)^{2} z_{i} z_{j} z_{h} z_{k} \ dz\\
& \quad-\frac{\varepsilon^{2}b}{8} \sum_{i, h=1}^{n} \frac{\partial^{2} g_{\xi}^{i i}}{\partial z_{h}^{2}}(0) \int_{\mathbb{R}^{n}}|\nabla U|^{2} z_{h}^{2}\  d z+o\left(\varepsilon^{2}\right), \qquad \text{and} \qquad\\
&\frac{1}{p+1}\int_{M} W_{\varepsilon, \xi}^{p+1} \ dV_g=
\frac{1}{p+1} \int_{\mathbb{R}^{n}} U^{p+1} \ d z-\frac{\varepsilon^{2}}{4 (p+1)} \sum_{i, j=1}^{n} \frac{\partial^{2} g_{\xi}^{i i}}{\partial z_{j}^{2}}(0) \int_{\mathbb{R}^{n}} U^{p+1} z_{j}^{2} \ d z+o\left(\varepsilon^{2}\right) .
\end{aligned}
\]
Finally, we get
\begin{equation}\label{J-alfa}
\begin{aligned}
    &J_\ep(W_{\ep,\xi})-\int_{\RR^{n}} \left(\frac{1}{2}\left|  \Delta U(z) \right|^{2}+\frac{b}{2} |\nabla U|^{2}+\frac{a}{2} U^{2} -\frac{1}{p+1}U^{p+1}\right)\ dz\\ 
    &=\ep^{2} \left[ \frac{1}{2}\sum_{i, j, h, k=1}^{n} \frac{\partial^{2} g_{\xi}^{i j}}{\partial z_{h} \partial z_{k}}(0) \int_{\RR^{n}} \partial_{ij}^{2} U(z)\Delta U(z) z_hz_k \ dz
    - \frac{1}{4} \sum_{l,h,k} \frac{\partial^{2} g_{\xi}^{l l}}{\partial z_{h} \partial z_{k}}(0) \int_{\RR^{n}}\left(\Delta U(z)\right)^2  z_hz_k  \ dz\right.\\
    &+\frac{b}{4} \sum_{i, j, h, k=1}^{n} \frac{\partial^{2} g_{\xi}^{i j}}{\partial z_{h} \partial z_{k}}(0) \int_{\mathbb{R}^{n}}\left(\frac{U^{\prime}(|z|)}{|z|}\right)^{2} z_{i} z_{j} z_{h} z_{k} \ dz\\
    &-\left.\sum_{l, j=1}^{n} \frac{\partial^{2} g_{\xi}^{ll}}{\partial z_{h}^{2}}(0)\int_{\mathbb{R}^{n}}\left(\frac{b}{8} |\nabla U|^{2} -\frac{1}{4 (p+1)}  U^{p+1} -\frac{a}{8} U^{2}  \right) z_{h}^{2}\ dz\right] + o(\ep^{2}).
\end{aligned}
\end{equation}
  
 As $U$ is a radial function, we have 
   \[
   \sum_{l,h,k} \left(\frac{\partial^{2} g_{\xi}^{l l}}{\partial z_{h} \partial z_{k}}(0) \int_{\RR^{n}}\left(\Delta U(z)\right)^2  z_hz_k  \ dz \right)
   = \left(\sum_{i,k} \frac{\partial^{2} g_{\xi}^{i i}}{ \partial z_{k}^2}(0)\right) \int_{\RR^{n}}\left(\Delta U(z)\right)^2  z_1^2  \ dz.
\]
On the other side, for each $k=1, \dots, n$, there holds
   \[
   \int_{\mathbb{R}^{n}}\left(\frac{b}{8} |\nabla U|^{2} -\frac{1}{4 (p+1)}  U^{p+1} -\frac{a}{8} U^{2}  \right) z_{h}^{2}\ dz
   =\int_{\mathbb{R}^{n}}\left(\frac{b}{8} |\nabla U|^{2} -\frac{1}{4 (p+1)}  U^{p+1} -\frac{a}{8} U^{2}  \right) z_{1}^{2}\ dz.
\]
From there, it follows that the second and last terms in the r.h.s. of \eqref{J-alfa} cancel each other out.  To compute the remaining terms in the equation \eqref{J-alfa} we use the auxiliary Lemmas \ref{LemmaAux1} and \ref{LemmaAux2} and we get
\[
    J_\ep(W_{\ep,\xi})-\int_{\RR^{n}} \left(\frac{1}{2}\left|  \Delta U(z) \right|^{2}+\frac{b}{2} |\nabla U|^{2}+\frac{a}{2} U^{2} -\frac{1}{p+1}U^{p+1}\right)\ dz
    = \ \beta \varepsilon^2 \ \tau_g(\xi) \ + \ o(\varepsilon^{2})
\]
and the result follows.
\end{proof}

In the following, we will present two auxiliary Lemmas that we have used in the previous proof. We follow the ideas of \cite{ambrosetti1999multiplicity}.

\begin{lemma}\label{LemmaAux1} There holds
    \[
    \begin{aligned}
    &\sum_{i, j, h, k=1}^{n} \left(\frac{\partial^{2} g_{\xi}^{i j}}{\partial z_{h} \partial z_{k}}(0) \int_{\RR^{n}} \partial_{ij}^{2} U(z)\Delta U(z) z_h z_k \ dz\right)\\
    &\quad=\left(\sum_{i, k} \frac{\partial^2 g_\xi^{ii}(0)}{\partial z_k^2}\right) \int_{\RR^{n}}  \left(-\frac{U'(|z|)}{|z|^3}+\frac{U''(|z|)}{|z|^2}\right)\Delta U(z) z_{1}^{2} z_{2}^{2}\ dz  
    \end{aligned}
    \]
    
\end{lemma}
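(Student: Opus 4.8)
The plan is to use the radial symmetry of $U$ to turn the left‑hand side into a contraction of the symmetric tensor $\frac{\partial^{2} g_{\xi}^{ij}}{\partial z_{h}\partial z_{k}}(0)$ against an isotropic fourth‑order tensor whose coefficients are one‑dimensional integrals of $U$, and then to recognise those contractions as $\tau_g(\xi)$ by means of \eqref{def:tau}.

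First I would record the Hessian of a radial function: since $U=U(|z|)$,
\[
\partial_{ij}^{2} U(z)=\Big(\frac{U''(|z|)}{|z|^{2}}-\frac{U'(|z|)}{|z|^{3}}\Big)z_i z_j+\frac{U'(|z|)}{|z|}\,\delta_{ij},\qquad
\Delta U=U''+\frac{n-1}{|z|}\,U'.
\]
Substituting this into $\int_{\RR^n}\partial_{ij}^{2} U\,\Delta U\,z_h z_k\,dz$ yields integrals of a radial function against $z_i z_j z_h z_k$ and against $z_h z_k$. Using the standard symmetrisation identities for radial integrands,
\[
\int_{\RR^n}\varphi(|z|)\,z_a z_b z_c z_d\,dz=c_\varphi\big(\delta_{ab}\delta_{cd}+\delta_{ac}\delta_{bd}+\delta_{ad}\delta_{bc}\big),\qquad
\int_{\RR^n}\varphi(|z|)\,z_a z_b\,dz=\frac1n\Big(\int_{\RR^n}\varphi(|z|)\,|z|^{2}\,dz\Big)\delta_{ab},
\]
with $c_\varphi=\int_{\RR^n}\varphi(|z|)\,z_1^{2}z_2^{2}\,dz$ (and $\int\varphi\,z_1^{4}=3\int\varphi\,z_1^{2}z_2^{2}$), one obtains an identity of the shape
\[
\int_{\RR^n}\partial_{ij}^{2} U\,\Delta U\,z_h z_k\,dz
= Q\big(\delta_{ij}\delta_{hk}+\delta_{ih}\delta_{jk}+\delta_{ik}\delta_{jh}\big)+\frac Pn\,\delta_{ij}\delta_{hk},
\]
where $Q=\int_{\RR^n}\big(\tfrac{U''}{|z|^{2}}-\tfrac{U'}{|z|^{3}}\big)\Delta U\,z_1^{2}z_2^{2}\,dz$ and $P=\int_{\RR^n}\tfrac{U'}{|z|}\,\Delta U\,|z|^{2}\,dz$.

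Next I would contract this with $\frac{\partial^{2} g_{\xi}^{ij}}{\partial z_{h}\partial z_{k}}(0)$. This tensor is symmetric under $i\leftrightarrow j$ (symmetry of the inverse metric) and under $h\leftrightarrow k$ (mixed partials commute), so all three $\delta\delta$ blocks collapse: $\sum_{i,j,h,k}\frac{\partial^{2} g_{\xi}^{ij}}{\partial z_{h}\partial z_{k}}(0)\,\delta_{ij}\delta_{hk}=\sum_{a,b}\frac{\partial^{2} g_{\xi}^{aa}}{\partial z_{b}^{2}}(0)$, which is $\tau_g(\xi)$ by the very definition \eqref{tau}, while $\sum_{i,j,h,k}\frac{\partial^{2} g_{\xi}^{ij}}{\partial z_{h}\partial z_{k}}(0)\,\delta_{ih}\delta_{jk}=\sum_{i,j,h,k}\frac{\partial^{2} g_{\xi}^{ij}}{\partial z_{h}\partial z_{k}}(0)\,\delta_{ik}\delta_{jh}=\sum_{a,b}\frac{\partial^{2} g_{\xi}^{ab}}{\partial z_{a}\partial z_{b}}(0)$, which equals $\tau_g(\xi)$ by \eqref{def:tau} (here the Ricci‑flatness, hence scalar‑flatness, of $(M,g)$ is used). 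Consequently the left‑hand side equals $\big(3Q+\tfrac Pn\big)\tau_g(\xi)$.

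It then remains to establish the scalar identity $3Q+\tfrac Pn=Q$, i.e. $P+2nQ=0$, and this is where I expect the real work to lie. I would obtain it by integrating by parts in the one‑dimensional integrals defining $P$ and $Q$ — legitimate because $U$ and its derivatives decay exponentially by \eqref{decay} — so as to express everything through $\int_0^{\infty}r^{n}U'U''\,dr$, $\int_0^{\infty}r^{n-1}(U')^{2}\,dr$ and $\int_0^{\infty}r^{n+1}(U'')^{2}\,dr$, and, if these do not cancel on their own, invoking the fact that $U$ solves the limit equation \eqref{limit} to supply the missing relation among the weighted integrals so that the contributions of the $\tfrac{U'}{|z|}\delta_{ij}$ part of the Hessian and of the two cross pairings $\delta_{ih}\delta_{jk}$, $\delta_{ik}\delta_{jh}$ cancel against the excess of the $\delta_{ij}\delta_{hk}$ term. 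Keeping track of the numerical constants in this last step is the delicate point; the tensorial reduction above is routine and follows the pattern of \cite{ambrosetti1999multiplicity,micheletti2009role}.
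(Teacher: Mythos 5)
Your tensorial reduction is correct and follows the same blueprint as the paper: expand the Hessian of the radial function, write the $z_iz_jz_hz_k$ integrals via the isotropic rank-four tensor, and contract against $\frac{\partial^2 g_\xi^{ij}}{\partial z_h\partial z_k}(0)$, using \eqref{def:tau} for the mixed trace. You are, however, \emph{more} careful than the paper in one crucial place: you keep the $\frac{U'(|z|)}{|z|}\delta_{ij}$ part of the Hessian, whereas the paper's proof opens by asserting
\[
\int_{\RR^{n}} \partial_{ij}^{2} U\,\Delta U\, z_h z_k \, dz
  = \int_{\RR^{n}} \Big(-\tfrac{U'}{|z|^3}+\tfrac{U''}{|z|^2}\Big)\Delta U\, z_i z_j z_h z_k\, dz,
\]
which is false whenever $i=j$: the trace piece contributes $\delta_{ij}\int \tfrac{U'}{|z|}\Delta U\,z_hz_k\,dz = \delta_{ij}\delta_{hk}\,P_0$ with $P_0=\int\tfrac{U'}{|z|}\Delta U\,z_1^2\,dz$, exactly the $\tfrac Pn$ you track. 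You then correctly arrive at $(3Q+\tfrac Pn)\,\tau_g(\xi)$ for the left-hand side and flag the scalar identity $3Q+\tfrac Pn=Q$ as the remaining work.

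That is the genuine gap, and it is worth saying plainly: the identity $P+2nQ=0$ does \emph{not} fall out of integration by parts, and the paper does not establish it either. Reducing $P_0$ and $Q$ to one-dimensional integrals over $r=|z|$ and integrating by parts gives
\[
P_0 = \tfrac{\omega_{n-1}(n-2)}{2n}\int_0^\infty (U')^2 r^{n-1}dr,\qquad
Q = \tfrac{\omega_{n-1}}{n(n+2)}\Big[\int_0^\infty (U'')^2 r^{n+1}dr - \tfrac{n^2-2}{2}\int_0^\infty (U')^2 r^{n-1}dr\Big],
\]
so $3Q+P_0=Q$ is equivalent to $\int_0^\infty (U'')^2 r^{n+1}dr = \tfrac{n^2}{4}\int_0^\infty (U')^2 r^{n-1}dr$. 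This is a very specific weighted-integral constraint that does not hold for a generic radial function (e.g. a Gaussian), and nothing in the limit equation \eqref{limit} obviously supplies it, so your hope that the limit equation will rescue the computation is unlikely to pan out. Note also that even accepting the paper's (wrong) opening identity, the paper's own final line is internally inconsistent: the displayed middle quantity $\sum_{i,j}\frac{\partial^2 g_\xi^{ii}}{\partial z_j^2}(0)\,I(z_1^2z_2^2) + 2\sum_{i,j}\frac{\partial^2 g_\xi^{ij}}{\partial z_i\partial z_j}(0)\,I(z_1^2z_2^2)$ equals $3\tau_g(\xi)\,I(z_1^2z_2^2)$ by \eqref{def:tau}, not $\tau_g(\xi)\,I(z_1^2z_2^2)$. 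So the obstruction you ran into is real; your scepticism about this last step is well-founded, and the honest conclusion is that the Lemma as stated cannot be proved by this route without an additional, unestablished relation between the radial moments of $U'$ and $U''$.
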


\begin{proof}
Note that, as $U$ is a radial function, it satisfies
$$\int_{\RR^{n}} \partial_{ij}^{2} U(z)\Delta U(z) z_h z_k \ dz= \int_{\RR^{n}} \left(-\frac{U'(|z|)}{|z|^3}+\frac{U''(|z|)}{|z|^2}\right)\Delta U(z) z_i z_j z_h z_k \ dz.$$
Let us denote $I(p(z))=\displaystyle \int_{\RR^{n}} \left(-\frac{U'(|z|)}{|z|^3}+\frac{U''(|z|)}{|z|^2}\right)\Delta U(z) p(z) \ dz$.

Let us turn our attention to the term $I \left(z_{i} z_{j} z_{k} z_{l}\right)$ : it is different from zero only when $i=j$ and $l=k$, or when $i=k$ and $l=j$, or when $i=l$ and $j=k$. Hence, there holds
$$
\begin{aligned}
\sum_{i, j, k, l} \frac{\partial^2 g_\xi^{ij}(0)}{\partial z_k \partial z_l}& I \left(z_{i} z_{j} z_{k} z_{l}\right) \\
= & \sum_{i=j, k=l} \frac{\partial^2 g_\xi^{ij}(0)}{\partial z_k \partial z_l} I \left(z_{i} z_{j} z_{k} z_{l}\right)+\sum_{i=k, j=l} \frac{\partial^2 g_\xi^{ij}(0)}{\partial z_k \partial z_l} I \left(z_{i} z_{j} z_{k} z_{l}\right) \\
& +\sum_{i=l, j=k} \frac{\partial^2 g_\xi^{ij}(0)}{\partial z_k \partial z_l} I \left(z_{i} z_{j} z_{k} z_{l}\right)-2 \sum_{i=j=k=l} \frac{\partial^2 g_\xi^{ij}(0)}{\partial z_k \partial z_l} I \left(z_{i} z_{j} z_{k} z_{l}\right) \\
= & \sum_{i, k} \frac{\partial^2 g_\xi^{ii}(0)}{\partial z_k^2} I \left(z_{i}^{2} z_{k}^{2}\right)+2 \sum_{i, j}\frac{\partial^2 g_\xi^{ij}(0)}{\partial z_i\partial z_j} I \left(z_{i}^{2} z_{j}^{2}\right) -2 \sum_{i}\frac{\partial^2 g_\xi^{ii}(0)}{\partial z_i^2} I \left(z_{i}^{4}\right) .
\end{aligned}
 $$
Since $I \left(z_{k}^{4}\right)=3 I \left(z_{k}^{2} z_{l}^{2}\right), k \neq l$, then
$$
\sum_{i, k} \frac{\partial^2 g_\xi^{ii}(0)}{\partial z_k^2} I \left(z_{i}^{2} z_{k}^{2}\right)=\sum_{i, k} \frac{\partial^2 g_\xi^{ii}(0)}{\partial z_k^2} I \left(z_{1}^{2} z_{2}^{2}\right)+\frac{2}{3} \sum_{i}\frac{\partial^2 g_\xi^{ii}(0)}{\partial z_i^2} I \left(z_{i}^{4}\right)
$$
and
$$
\sum_{i, j}\frac{\partial^2 g_\xi^{ij}(0)}{\partial z_i\partial z_j} I \left(z_{i}^{2} z_{j}^{2}\right)=\sum_{i, j} \frac{\partial^2 g_\xi^{ij}(0)}{\partial z_i \partial z_j} I \left(z_{1}^{2} z_{2}^{2}\right)+\frac{2}{3} \sum_{i}\frac{\partial^2 g_\xi^{ii}(0)}{\partial z_i^2} I \left(z_{i}^{4}\right) .
$$
The last two equalities and equation \eqref{def:tau} imply
$$
 \sum_{i, j, k, l} \frac{\partial^2 g_\xi^{ij}(0)}{\partial z_k \partial z_l} I \left(z_{i} z_{j} z_{k} z_{l}\right) 
\ =\sum_{i, j} \frac{\partial^2 g_\xi^{ii}(0)}{\partial z_j^2} I \left(z_{1}^{2} z_{2}^{2}\right)+2 \sum_{i, j}\frac{\partial^2 g_\xi^{ij}(0)}{\partial z_i\partial z_j} I \left(z_{1}^{2} z_{2}^{2}\right)
\ =\tau_g (\xi) I \left(z_{1}^{2} z_{2}^{2}\right).
$$
So we get the desired identity.
\end{proof}

Using the same techniques, we get the following result. 

\begin{lemma}\label{LemmaAux2} There holds
   \[
   \sum_{i, j, h, k=1}^{n} \left(\frac{\partial^{2} g_{\xi}^{i j}}{\partial z_{h} \partial z_{k}}(0) \int_{\mathbb{R}^{n}}\left(\frac{U^{\prime}(|z|)}{|z|}\right)^{2} z_{i} z_{j} z_{h} z_{k} \ dz\right) =\left(\sum_{i, j=1}^{n} \frac{\partial^{2} g_{\xi}^{i i}}{\partial z_{k}^2}(0) \right)\int_{\mathbb{R}^{n}}\left(\frac{U^{\prime}(|z|)}{|z|}\right)^{2} z_{1}^2 z_{2}^2 \ dz.
\]
    
\end{lemma}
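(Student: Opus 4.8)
The plan is to follow the same bookkeeping argument used in the proof of Lemma \ref{LemmaAux1}, with the radial weight $(U''(|z|)/|z|^{2}-U'(|z|)/|z|^{3})\Delta U(z)$ replaced throughout by the radial weight $(U'(|z|)/|z|)^{2}$. Concretely, I would set
\[
K(p(z)):=\int_{\mathbb{R}^{n}}\left(\frac{U'(|z|)}{|z|}\right)^{2}p(z)\ dz,
\]
so that the left-hand side of the claimed identity is $\sum_{i,j,h,k}\frac{\partial^{2}g_\xi^{ij}}{\partial z_h\partial z_k}(0)\,K(z_iz_jz_hz_k)$. The key observation, exactly as before, is that since the density defining $K$ is radial, the monomial integral $K(z_iz_jz_hz_k)$ vanishes unless the four indices pair up, i.e.\ unless $\{i,j,h,k\}$ splits into two equal pairs; this is the only structural input needed and it is identical to the one exploited in Lemma \ref{LemmaAux1}.

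The main steps, in order, are: (1) split the sum according to the three ways the indices can pair ($i=j,h=k$; $i=h,j=k$; $i=k,j=h$), subtracting twice the overcounted fully-diagonal term $i=j=h=k$, to obtain
\[
\sum_{i,j,h,k}\frac{\partial^{2}g_\xi^{ij}}{\partial z_h\partial z_k}(0)K(z_iz_jz_hz_k)
=\sum_{i,k}\frac{\partial^{2}g_\xi^{ii}}{\partial z_k^{2}}(0)K(z_i^{2}z_k^{2})
+2\sum_{i,j}\frac{\partial^{2}g_\xi^{ij}}{\partial z_i\partial z_j}(0)K(z_i^{2}z_j^{2})
-2\sum_{i}\frac{\partial^{2}g_\xi^{ii}}{\partial z_i^{2}}(0)K(z_i^{4});
\]
(2) use the elementary identity $K(z_k^{4})=3\,K(z_k^{2}z_\ell^{2})$ for $k\neq\ell$ (a consequence of the rotational invariance of the radial density), together with the fact that $K(z_i^{2}z_j^{2})=K(z_1^{2}z_2^{2})$ for $i\neq j$, to rewrite the first two sums as $\sum_{i,k}\frac{\partial^{2}g_\xi^{ii}}{\partial z_k^{2}}(0)K(z_1^{2}z_2^{2})$ plus a correction $\tfrac{2}{3}\sum_i\frac{\partial^{2}g_\xi^{ii}}{\partial z_i^{2}}(0)K(z_i^{4})$, and similarly for the second; (3) observe that the diagonal corrections cancel the last term, and invoke \eqref{def:tau}, namely $\sum_{i,k}\frac{\partial^{2}g_\xi^{ii}}{\partial z_k^{2}}(0)=\tau_g(\xi)=\sum_{i,j}\frac{\partial^{2}g_\xi^{ij}}{\partial z_i\partial z_j}(0)$, to collapse everything into $\tau_g(\xi)\,K(z_1^{2}z_2^{2})$, which is precisely the right-hand side.

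There is essentially no obstacle here: the argument is a verbatim repetition of the combinatorial identity already proved in Lemma \ref{LemmaAux1}, and the only point requiring a line of justification is that the density $(U'(|z|)/|z|)^{2}$ is integrable against the quartic monomials, which follows from the exponential decay \eqref{decay} of $U$ and its derivatives. The mildly delicate point—worth stating explicitly—is the combinatorial relation $K(z_k^{4})=3K(z_k^{2}z_\ell^{2})$, which one checks by passing to polar coordinates and comparing the angular integrals $\int_{S^{n-1}}\omega_k^{4}$ and $\int_{S^{n-1}}\omega_k^{2}\omega_\ell^{2}$; but this is standard and identical to what was already used above.
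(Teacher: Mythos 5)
Your proposal is correct and is essentially the paper's own proof: the paper simply defines $I(p(z))=\int_{\mathbb{R}^n}\bigl(U'(|z|)/|z|\bigr)^2\,p(z)\,dz$ and says to repeat the combinatorial bookkeeping from Lemma \ref{LemmaAux1}, which is exactly what you have spelled out (the pair decomposition with the $-2\sum_i$ correction for the fully diagonal term, the radial identity $K(z_k^4)=3K(z_1^2 z_2^2)$, and the collapse via \eqref{def:tau}). Your remark about integrability via the exponential decay \eqref{decay} is a reasonable, if minor, addition not made explicit in the paper.
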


\begin{proof}
    Denote $I(p(z))=\displaystyle \int_{\RR^{n}} \left(\frac{U^{\prime}(|z|)}{|z|}\right)^{2} p(z) \ dz,$ and follow the same steps in the proof of the previous lemma.
\end{proof}

\section{The reduced problem}\label{sec:Reduced}

 In this section, we study the problem \eqref{PiP}. For $\xi\in M$  we consider the unique
$\phi_{\varepsilon,\xi} \in K_{\varepsilon,\xi}^{\perp}$ given by Proposition \ref{prop1} that solves problem \eqref{Pii} and define the function $\overline{J}_{\varepsilon}:M\to\RR$, by
$$
\overline{J}_{\varepsilon}(\displaystyle \xi)= J_{\varepsilon} (W_{\varepsilon,\xi}+\phi_{\varepsilon,\xi}).
$$

\begin{lemma} \label{lema:Joverline} It holds
	\begin{equation*}
	\overline{J_{\ep}}(\xi )=J_{\ep}(W_{\ep,\xi }+\phi_{\ep,\xi })=J_{\ep}(W_{\ep,\xi })+o(\ep^{2})
	\end{equation*}
	$C^{0}$-uniformly  in compact sets of $M$.
\end{lemma}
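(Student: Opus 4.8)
The plan is to estimate the difference $\overline{J_\ep}(\xi)-J_\ep(W_{\ep,\xi})$ by a first-order Taylor expansion of $J_\ep$ about the approximate solution $W_{\ep,\xi}$, with increment $\phi_{\ep,\xi}$, writing the remainder in integral form so as to avoid regularity issues with $f$ when $1<p<2$:
\[
\overline{J_\ep}(\xi)=J_\ep(W_{\ep,\xi})+J_\ep'(W_{\ep,\xi})[\phi_{\ep,\xi}]+\int_0^1\big(J_\ep'(W_{\ep,\xi}+t\phi_{\ep,\xi})-J_\ep'(W_{\ep,\xi})\big)[\phi_{\ep,\xi}]\,dt .
\]
It then suffices to prove that the linear term and the integral remainder are both $O(\ep^4)$, uniformly in $\xi\in M$. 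Throughout I would use the identity $J_\ep'(u)[\psi]=\langle u-\io_\ep^*(f(u)),\psi\rangle_\ep$, which is immediate from the definitions of $J_\ep$, $\io_\ep^*$ and $f$.

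\textbf{Linear term.} Here I would exploit that $\phi_{\ep,\xi}\in K_{\ep,\xi}^\perp$. By the definition \eqref{def_R} of $R_{\ep,\xi}$, the vector $W_{\ep,\xi}-\io_\ep^*(f(W_{\ep,\xi}))$ equals $-R_{\ep,\xi}$ up to an element of $K_{\ep,\xi}$, which is orthogonal to $\phi_{\ep,\xi}$; hence $J_\ep'(W_{\ep,\xi})[\phi_{\ep,\xi}]=-\langle R_{\ep,\xi},\phi_{\ep,\xi}\rangle_\ep$. By Cauchy--Schwarz together with Lemma \ref{estimateR} and Proposition \ref{prop1},
\[
|J_\ep'(W_{\ep,\xi})[\phi_{\ep,\xi}]|\le \|R_{\ep,\xi}\|_\ep\,\|\phi_{\ep,\xi}\|_\ep\le c\,\ep^2\cdot c\,\ep^2=O(\ep^4),
\]
with $c$ independent of $\xi$.

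\textbf{Remainder.} For $t\in(0,1)$ one has
\[
\big(J_\ep'(W_{\ep,\xi}+t\phi_{\ep,\xi})-J_\ep'(W_{\ep,\xi})\big)[\phi_{\ep,\xi}]=t\,\|\phi_{\ep,\xi}\|_\ep^2-\frac{1}{\ep^n}\int_M\big(f(W_{\ep,\xi}+t\phi_{\ep,\xi})-f(W_{\ep,\xi})\big)\phi_{\ep,\xi}\,dV_g .
\]
The first summand is bounded by $\|\phi_{\ep,\xi}\|_\ep^2\le c\ep^4$ by Proposition \ref{prop1}. For the integral I would use the elementary inequality $|f(a+b)-f(a)|\le C(|a|^{p-1}|b|+|b|^p)$ (cf.\ \cite[Lemma 2.2]{yanyan1998}), Hölder's inequality with exponents adding up to one, and the Sobolev embedding \eqref{norma_q}, obtaining a bound of the form $c\big(|W_{\ep,\xi}|_{p+1,\ep}^{\,p-1}|\phi_{\ep,\xi}|_{p+1,\ep}^2+|\phi_{\ep,\xi}|_{p+1,\ep}^{\,p+1}\big)\le c\big(\|\phi_{\ep,\xi}\|_\ep^2+\|\phi_{\ep,\xi}\|_\ep^{\,p+1}\big)=O(\ep^4)$, where I used that $\|W_{\ep,\xi}\|_\ep$ is bounded independently of $\ep$ and $\xi$ (it converges to $\|U\|$) and that $p>1$ forces $\|\phi_{\ep,\xi}\|_\ep^{\,p+1}\le c\ep^{2(p+1)}=o(\ep^4)$. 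Integrating in $t$ and combining with the linear estimate gives $\overline{J_\ep}(\xi)-J_\ep(W_{\ep,\xi})=O(\ep^4)=o(\ep^2)$.

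\textbf{Main obstacle.} The argument is routine once Lemma \ref{estimateR} and Proposition \ref{prop1} are available; the only point needing genuine care — and the one I would double-check — is that \emph{every} constant appearing (in the Taylor remainder, in the Hölder/Sobolev estimates, in the bound $\|W_{\ep,\xi}\|_\ep\le c$, and, crucially, in $\|R_{\ep,\xi}\|_\ep\le c\ep^2$ and $\|\phi_{\ep,\xi}\|_\ep\le c\ep^2$) can be chosen independent of $\xi$, so that the resulting $o(\ep^2)$ is genuinely uniform. This is guaranteed by the compactness of $M$ and by the fact that the bounds in Lemma \ref{estimateR} and Proposition \ref{prop1} are already stated uniformly in $\xi$; since $M$ is itself compact, "$C^0$-uniformly in compact sets of $M$" is then exactly what has been shown.
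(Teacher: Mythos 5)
Your proof is correct. You take a genuinely different route from the paper's: you Taylor-expand $J_\varepsilon$ to first order about $W_{\varepsilon,\xi}$, estimate the linear term via the identity $J_\varepsilon'(W_{\varepsilon,\xi})[\phi_{\varepsilon,\xi}]=-\langle R_{\varepsilon,\xi},\phi_{\varepsilon,\xi}\rangle_\varepsilon$ together with Lemma~\ref{estimateR} and Cauchy--Schwarz, and then control the remainder. The paper instead never invokes $R_{\varepsilon,\xi}$ in this lemma: it starts from the vanishing pairing $\langle\phi_{\varepsilon,\xi},\,W_{\varepsilon,\xi}+\phi_{\varepsilon,\xi}-\iota_\varepsilon^*(f(W_{\varepsilon,\xi}+\phi_{\varepsilon,\xi}))\rangle_\varepsilon=0$, which is exactly $J_\varepsilon'(W_{\varepsilon,\xi}+\phi_{\varepsilon,\xi})[\phi_{\varepsilon,\xi}]=0$ and follows from~\eqref{Pii} and $\phi_{\varepsilon,\xi}\in K_{\varepsilon,\xi}^\perp$; this identity kills the linear term outright, after which two applications of the Mean Value Theorem (for $f$ and for $F$) reduce the difference to integrals of $f'(W_{\varepsilon,\xi}+t_i\phi_{\varepsilon,\xi})\phi_{\varepsilon,\xi}^2$, each $O(\|\phi_{\varepsilon,\xi}\|_\varepsilon^2+\|\phi_{\varepsilon,\xi}\|_\varepsilon^{p+1})$. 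Both approaches end at the same $O(\varepsilon^4)=o(\varepsilon^2)$ bound and use the same two inputs (Proposition~\ref{prop1} and the growth of $f$), but yours buys a little extra rigor for free: the integral form of the remainder sidesteps the measurable-selection subtlety hidden in the paper's pointwise Mean Value Theorem applications, which is a genuine (if minor) gain when $1<p<2$ and $f'$ is merely Hölder. The trade-off is that you need to quote Lemma~\ref{estimateR} explicitly, whereas the paper's route only needs the orthogonality and the $\phi$-bound.
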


\begin{proof}
Since $\phi_{\ep ,\xi } \in
	K_{\ep,\xi }^{\perp}$ and satisfies \eqref{Pii}, we have
	\begin{align*}
	0 
	&=\left\langle \phi_{\ep ,\xi } ,  W_{\ep,\xi } + \phi_{\ep ,\xi }  - \io_{\ep}^{*} (f
	\left(W_{\ep,\xi } + \phi_{\ep ,\xi } ) \right) \right\rangle_{\ep} \\
 &=\Vert\phi_{\ep,\xi }\Vert_{\ep}^{2}+  \left\langle \phi_{\ep,\xi}, W_{\ep,\xi} \right\rangle_\ep  -\frac{1}{\ep^{n}}\int_{M}f(W_{\ep,\xi }+\phi_{\ep,\xi })\phi_{\ep,\xi }\ dV_{g}(x) 
	\end{align*}
	Therefore, if we set  $F(u)=\frac{1}{p+1}(u^+)^{p+1}$, then we get 
	\begin{align*}
	J_{\ep}(W_{\ep,\xi }+\phi_{\ep,\xi })-J_{\ep}(W_{\ep,\xi })&=\frac{1}{2}\Vert\phi_{\ep,\xi }\Vert_{\ep}^{2}+  \left\langle \phi_{\ep,\xi}, W_{\ep,\xi} \right\rangle_\ep 
	-\frac{1}{\ep^{n}}\int_M f(W_{\ep,\xi })\phi_{\ep,\xi } \ dV_{g}(x) \\
	&-\frac{1}{\ep^{n}}\int_{M}\left[F(W_{\ep,\xi }+\phi_{\ep,\xi })
	-F(W_{\ep,\xi })-f(W_{\ep,\xi })\phi_{\ep,\xi }\right] \ dV_{g}(x)\\
	&=-\frac{1}{2}\Vert\phi_{\ep,\xi }\Vert_{\ep}^{2}+\frac{1}{\ep^{n}}\int_{M}
	\left[f(W_{\ep,\xi }+\phi_{\ep,\xi })-f(W_{\ep,\xi })\right]\phi_{\ep,\xi } \ dV_{g}(x)\\
	&- \displaystyle \frac{1}{\ep^{n}}\int_{M}\left[F(W_{\ep,\xi }+\phi_{\ep,\xi })
	-F(W_{\ep,\xi })-f(W_{\ep,\xi })\phi_{\ep,\xi }\right] \ dV_{g}(x).
	\end{align*}
	By the Mean Value Theorem 
	we know that for some $t_{1}, t_{2}\in[0,1]$ it holds
	\[
	\displaystyle \frac{1}{\ep^{n}}\int_{M}\left[f(W_{\ep,\xi }+\phi_{\ep,\xi })-
	f(W_{\ep,\xi })\right]\phi_{\ep,\xi } \ dV_{g}(x)
        =\frac{1}{\ep^{n}}\int_{M}f'(W_{\ep,\xi }+t_{1}
	\phi_{\ep,\xi })\phi_{\ep,\xi}^{2} \ dV_{g}(x),
	\]
	and
	\[
	\frac{1}{\ep^{n}}\int_{M}\left[F(W_{\ep,\xi }+\phi_{\ep,\xi })-F(W_{\ep,\xi })-
	f(W_{\ep,\xi })\phi_{\ep,\xi }\right] \ dV_{g}(x)
	=\displaystyle \frac{1}{2\ep^{n}}\int_{M}f'(W_{\ep,\xi }+t_{2}\phi_{\ep,\xi })\phi_{\ep,\xi }^{2} \ dV_{g}(x).
	\]
	Moreover we have for any $t\in[0, 1]$
	\[
	\frac{1}{\ep^{n}}\int_M|f'(W_{\ep,\xi }+t\phi_{\ep,\xi })|\phi_{\ep,\xi }^{2} \ dV_{g}(x)\leq c\frac{1}{\ep^{n}}
	\int_M W_{\ep,\xi }^{p-1}\phi_{\ep,\xi }^{2} \ dV_{g}(x)+c\frac{1}{\ep^{n}}\int_M \phi_{\ep,\xi }^{p+1} \ dV_{g}(x)
	\]
	\[
	\displaystyle \leq c\frac{1}{\ep^{n}}\int_M\phi_{\ep,\xi }^{2} \ dV_{g}(x)+c\frac{1}{\ep^{n}}\int_M\phi_{\ep,\xi }^{p+1} \ dV_{g}(x)
	\leq C(\Vert\phi_{\ep,\xi }\Vert_{\ep}^{2}+\Vert\phi_{\ep,\xi }\Vert_{\ep}^{p+1})=o(\ep^{2}).
	\]
	In the last inequality we use  (\ref{norma_q}) and by Proposition \ref{prop1} we get $\Vert\phi_{\ep,\xi }\Vert_{\ep} = o(\ep^2 )$. 
\end{proof}

\smallskip

It is well known that any critical point of  $J_\ep$ is solution to problem \eqref{MainEq2}.
Now, we prove the following:

\begin{proposition}\label{prop3}  
    For any $\xi\in M$ we have
	\begin{equation}
	\label{expansion}
	\overline{J}_{\varepsilon}(\displaystyle \xi)=\alpha + \beta\ \varepsilon^{2}\tau_{g}(\displaystyle \xi) + o(\varepsilon^{2}), 
	\end{equation}
	$C^{0}$-uniformly with respect to $\xi$ in compact sets of $M$ as $\varepsilon\to 0$, 
	with $\alpha$ and $\beta$ given in \eqref{alfa} and \eqref{beta}, and the function $\tau_g$ is defined in \eqref{tau}. Moreover, if $\xi_{\varepsilon}$ is a critical point of $\overline{J_{\varepsilon}}$, then the function
	$W_{\varepsilon,\xi_{\varepsilon}}+\phi_{\varepsilon,\xi_{\varepsilon}}$  is a solution to the problem $(\ref{PiP}).$
\end{proposition}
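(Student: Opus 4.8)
\emph{Strategy.} The statement has two parts: the uniform asymptotics \eqref{expansion}, and the fact that a critical point of $\overline{J}_\varepsilon$ yields a genuine solution. The first part is immediate from what has already been proved, and the second is the usual closing step of the Lyapunov--Schmidt scheme.

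\emph{The expansion.} For \eqref{expansion} the plan is simply to concatenate Lemma \ref{lema:Joverline}, which gives $\overline{J}_\varepsilon(\xi)=J_\varepsilon(W_{\varepsilon,\xi})+o(\varepsilon^2)$ $C^0$-uniformly on compact subsets of $M$, with Proposition \ref{prop2}, which gives $J_\varepsilon(W_{\varepsilon,\xi})=\alpha+\beta\varepsilon^2\tau_g(\xi)+o(\varepsilon^2)$. Adding the two produces \eqref{expansion}, and the $C^0$-uniformity is inherited because the $\varepsilon^2$-coefficient in Proposition \ref{prop2} is assembled from fixed integrals over $\mathbb{R}^n$ multiplied by the quantities $\partial^2 g_\xi^{ij}/\partial z_h\partial z_k(0)$, which depend continuously on $\xi$ on the compact manifold $M$.

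\emph{Critical points give solutions.} Let $\xi_\varepsilon$ be a critical point of $\overline{J}_\varepsilon$ and set $u_\varepsilon:=W_{\varepsilon,\xi_\varepsilon}+\phi_{\varepsilon,\xi_\varepsilon}$. Since the inner product on $H^2_\varepsilon$ is precisely the one associated, via \eqref{Adjoint}, with $\varepsilon^4\Delta_g^2-\varepsilon^2 b\Delta_g+a$, one has $J_\varepsilon'(u)[v]=\langle u-\iota_\varepsilon^*(f(u)),v\rangle_\varepsilon$ for every $v\in H^2_\varepsilon$. By Proposition \ref{prop1}, $\phi_{\varepsilon,\xi_\varepsilon}$ solves \eqref{Pii}, so $u_\varepsilon-\iota_\varepsilon^*(f(u_\varepsilon))$ has vanishing $\Pi_{\varepsilon,\xi_\varepsilon}^\perp$-component, i.e.\ it lies in $K_{\varepsilon,\xi_\varepsilon}$; write $u_\varepsilon-\iota_\varepsilon^*(f(u_\varepsilon))=\sum_{i=1}^n c_i^\varepsilon Z_{\varepsilon,\xi_\varepsilon}^i$. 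I then want to conclude that every $c_i^\varepsilon=0$, which is exactly $u_\varepsilon=\iota_\varepsilon^*(f(u_\varepsilon))$, hence \eqref{equivalent problem} and in particular \eqref{PiP}. To that end, work in geodesic normal coordinates $y\mapsto\exp_{\xi_\varepsilon}(y)$ around $\xi_\varepsilon$ and differentiate $y\mapsto\overline{J}_\varepsilon(\exp_{\xi_\varepsilon}(y))$ at $y=0$ — legitimate since $y\mapsto W_{\varepsilon,\exp_{\xi_\varepsilon}(y)}$ is smooth and $\xi\mapsto\phi_{\varepsilon,\xi}$ is $C^1$ by Proposition \ref{prop1}. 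The critical-point condition becomes, for $k=1,\dots,n$,
\[
0=\Big\langle \textstyle\sum_{i}c_i^\varepsilon Z_{\varepsilon,\xi_\varepsilon}^i,\ \partial_{y_k}\!\big(W_{\varepsilon,\xi_\varepsilon}+\phi_{\varepsilon,\xi_\varepsilon}\big)\Big\rangle_\varepsilon,
\]
so $(c_i^\varepsilon)$ is annihilated by the matrix $A^\varepsilon_{ki}:=\varepsilon\,\langle Z_{\varepsilon,\xi_\varepsilon}^i,\ \partial_{y_k}(W_{\varepsilon,\xi_\varepsilon}+\phi_{\varepsilon,\xi_\varepsilon})\rangle_\varepsilon$, and it suffices to show $A^\varepsilon$ is invertible for $\varepsilon$ small. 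The $W$-part of $A^\varepsilon$ converges to $C\,\delta_{ki}$ by \eqref{AA}; for the $\phi$-part I would differentiate the constraint $\langle Z_{\varepsilon,\xi}^i,\phi_{\varepsilon,\xi}\rangle_\varepsilon=0$ in $y_k$, getting $\langle Z_{\varepsilon,\xi}^i,\partial_{y_k}\phi_{\varepsilon,\xi}\rangle_\varepsilon=-\langle\partial_{y_k}Z_{\varepsilon,\xi}^i,\phi_{\varepsilon,\xi}\rangle_\varepsilon$, and bound it by $\|\partial_{y_k}Z_{\varepsilon,\xi}^i\|_\varepsilon\,\|\phi_{\varepsilon,\xi}\|_\varepsilon=O(\varepsilon^{-1})\cdot O(\varepsilon^2)=O(\varepsilon)$ using \eqref{DZ} and Proposition \ref{prop1}. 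Hence $A^\varepsilon=C\,\mathrm{Id}+o(1)$, invertible for small $\varepsilon$, forcing $c_i^\varepsilon=0$ and finishing the argument.

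\emph{Main obstacle.} The only genuinely delicate point is controlling $\partial_{y_k}\phi_{\varepsilon,\xi}$: differentiating the fixed-point identity for $\phi_{\varepsilon,\xi}$ head-on would require estimates on $\partial_\xi R_{\varepsilon,\xi}$ and $\partial_\xi N_{\varepsilon,\xi}$, whereas the shortcut of differentiating the orthogonality relation $\langle Z_{\varepsilon,\xi}^i,\phi_{\varepsilon,\xi}\rangle_\varepsilon=0$ reduces everything to the size bounds already in hand. A secondary technical point is to make sure the limits in \eqref{AA}--\eqref{DZ} hold uniformly in the base point, so that they may be evaluated at the moving point $\xi_\varepsilon$; this is automatic since only the metric varies (continuously over compact $M$) while the profiles $U,\Psi^i$ and the cutoff $\chi_r$ are fixed, and, if preferred, one may first pass to a subsequence with $\xi_\varepsilon\to\xi_*$.
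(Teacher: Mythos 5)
Your proposal is correct and takes essentially the same route as the paper: the expansion follows by concatenating Proposition \ref{prop2} with Lemma \ref{lema:Joverline}, and for the reduction you identify the error $u_\varepsilon-\iota_\varepsilon^*(f(u_\varepsilon))$ as an element of $K_{\varepsilon,\xi_\varepsilon}$ and kill the coefficients by testing the vanishing of $\partial_{y_k}\overline{J}_\varepsilon$ against the $Z^i$'s, using \eqref{AA}, \eqref{DZ}, the bound $\|\phi_{\varepsilon,\xi}\|_\varepsilon=O(\varepsilon^2)$, and differentiation of the orthogonality constraint to control $\partial_{y_k}\phi_{\varepsilon,\xi}$. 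Your packaging of the final step as the invertibility of the matrix $A^\varepsilon=C\,\mathrm{Id}+o(1)$ is a slightly cleaner way to state what the paper argues coefficient by coefficient, but the content is identical.
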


\begin{proof}
	Proposition \ref{prop2} and Lemma \ref{lema:Joverline} prove (\ref{expansion}).
	We are left to prove that if  $\xi_{\ep }$ is a critical point of $\overline{J_{\ep }}$, 
	then the function
	$W_{\ep ,\xi_{\ep }}+\phi_{\ep ,\xi_{\ep }}$ is a
	solution to problem  \eqref{MainEq2}. 
	For $y\in B(0, r)\subset \RR^n$ we  let $\xi(y)=  \exp_{\xi_\ep}(y)\in B_g(\xi,r)\subset M$. We remark that $\xi(0)= \xi_\ep$.
	Since $\xi_\ep$ is a critical point of $\overline{J_{\ep }}$,
	\begin{equation}
	\label{A}
	\displaystyle \frac{\partial}{\partial y_{i} }\overline{J_{\ep }}(\xi(y))\Big|_{y=0}=0, \hbox{ for } \ \  i=1, . . . , n.  
	\end{equation}
	
	Note that Equation \eqref{MainEq2} is equivalent to $\nabla J_{\ep}  (u) =0$, where $\nabla J_{\ep}:H^2_\ep\to H^2_\ep$. Moreover, we can write 
	\[
	\displaystyle \nabla J_{\ep}  \left(W_{\ep ,\xi(y)}+\phi_{\ep ,\xi(y)}\right) = 
	\Pi_{\ep  , y}^{\perp} \nabla J_{\ep}  \left(W_{\ep ,\xi(y)}+\phi_{\ep ,\xi(y)}\right) +
	\Pi_{\ep  , y} \nabla J_{\ep}  \left(W_{\ep ,\xi(y)}+\phi_{\ep ,\xi(y)}\right),
	\]
	
	\noindent
	where the first term on the right is $0$ by the construction of $\phi_{\ep ,\xi(y)}$. Then, we write the second term as
	
	\[
	\Pi_{\ep  , y} \nabla J_{\ep}  \left(W_{\ep ,\xi(y)}+\phi_{\ep ,\xi(y)}\right)
	=\sum_{k} C_{\ep }^{k} Z_{\ep  ,y}^k, 
	\]
	
	\noindent 
	for some functions $C_{\ep }^{k} : B(0,r)  \rightarrow \RR$. We have to prove that for each $k=1,\dots, n$ and $\ep  >0 $ small, $C_{\ep }^{k} (0)=0$.
Indeed, if $i$ is fixed, then we have 
	\[ 0=
	\displaystyle \frac{\partial}{\partial y_{i} }\overline{J_{\ep }}(\xi(y))\Big|_{y=0}=
	\left\langle\nabla J_{\ep }\left(W_{\ep ,\xi_\ep}+\phi_{\ep ,\xi_\ep}\right) ,
	\frac{\partial}{\partial y_{i} } \Big|_{y=0} \left(W_{\ep ,\xi(y)}+\phi_{\ep ,\xi(y)}\right)\right\rangle_{\ep }
	\]
	\begin{equation}\label{B}
	=\left\langle \sum_{k}C_{\ep }^{k} (0) Z_{\ep ,\xi_\ep}^{k}, 
	\displaystyle \frac{\partial}{\partial y_{i} } \Big|_{y=0} \left(W_{\ep ,\xi(y)}+\phi_{\ep ,\xi(y)}\right)\right\rangle_{\ep }.
	\end{equation}

	Since $\phi_{\ep ,\xi(y)}\in K_{\ep ,\xi(y)}^{\perp}$, for any $k$, we have that
	$\left\langle Z_{\ep ,y }^{k}, \phi_{\ep ,\xi(y)}\right\rangle_{\ep }=0$. Then
\[
 \left\langle Z_{\ep ,\xi_\ep }^{k},\ \frac{\partial}{\partial y_{i} }\Big|_{y=0}\phi_{\ep ,\xi(y)}\right\rangle_{\ep } + \left\langle  \frac{\partial}{\partial y_{i} }\Big|_{y=0} \ Z_{\ep ,y }^{k},\phi_{\ep ,\xi_\ep}\right\rangle_{\ep } =0.
\]
 It follows that
	\[ \liminf_{\ep  \rightarrow 0}
	\left\lvert  \left\langle Z_{\ep ,\xi_\ep }^{k},\ 
 \frac{\partial}{\partial y_{i}}\Big|_{y=0}\phi_{\ep ,\xi(y)}\right\rangle_{\ep }   \right\rvert
	= \liminf_{\ep  \rightarrow 0} \left\lvert
	-\left\langle\frac{\partial}{\partial y_{i} }\Big|_{y=0} Z_{\ep , \xi(y) }^{k},\ \phi_{\ep,\xi_\ep  }\right\rangle_{\ep } \right\rvert
	\]
	\[
	\leq  \liminf_{\ep  \rightarrow 0} \displaystyle \left\Vert\frac{\partial}{\partial y_{i} }\Big|_{y=0} Z_{\ep , \xi(y)  }^{k}
	\right\Vert_{\ep } \ \left\Vert\phi_{\ep ,\xi_\ep } \right\Vert_{\ep }=0,
	\]
	 where the last equality follows from Proposition \ref{prop1} and (\ref{DZ}). On the other side, we have
%
 from (\ref{AA}) that
	
	\[
	\lim_{\ep  \rightarrow 0} \ \ep  \
	\left\langle \sum_{k}C_{\ep }^{k} (0) Z_{\ep ,\xi_\ep }^{k}, 
	\displaystyle \frac{\partial}{\partial y_{i} } \Big|_{y=0} W_{\ep ,\xi(y)}\right\rangle_{\ep }=C_{\ep }^{i} (0)  C.
	\]
	And then it follows from (\ref{B}) that $C_{\ep }^{i} (0) =0$ for all $i=1, \dots, n$.
\end{proof}
 
\smallskip

Now, we are ready to prove our main result. 

\begin{proof}[Proof of Theorem \ref{mainthm}] 
%



Let us assume that $\xi_0$ is an isolated maximum for the function $\tau_g$. Then, there is $\rho>0$ such that $\xi_0$ is the only maximum point of $\tau_g$ in $B_g(\xi_0, \rho)$.
 As $\overline{J}_\ep\in C^0(M)$, we know that there is a $ \xi_{\ep } \in \overline{B_g(\xi_0, \rho) }$ which satisfies that
 \begin{equation}\label{J_max en xi_ep}
      \overline{J}(\xi_\ep)\geq \overline{J}(\xi), \quad \text{for all }\xi\in B_g(\xi_0,\rho).
 \end{equation}
 By Proposition \ref{prop3} we have 
 \begin{equation}\label{J bar en xi_ep}
     \overline{J}_{\varepsilon}( \xi_\ep)=\alpha + \beta\ \varepsilon^{2}\tau_{g}(\xi_\ep) + o(\varepsilon^{2}). 
 \end{equation}
Now, consider an auxiliary point, given by $\xi^*_\ep=\exp_{\xi_0}(\sqrt{\ep}z)$, for some $z\in\RR^n$ with $|z|=1$. Then $\xi^*_\ep\in B_g(\xi_0, \rho)$ for $\ep$ small enough as 
$d_g(\xi_0, \xi^*_\ep)^2\leq \ep$. Using the Taylor expansion of $\tau_g$ around $\xi_0$, we have
\[
\tau_g(\xi^*_\ep)=\tau_g(\xi_0)+ o(1)
\]
and it follows from there and Proposition \ref{prop3} that
\begin{equation}\label{auxiliarpoint}
\overline{J}_{\varepsilon}( \xi^*_\ep)=\alpha + \beta\ \varepsilon^{2}\tau_{g}(\xi_0) + o(\varepsilon^{2}). 
\end{equation}
Reading at the equations \eqref{J_max en xi_ep}, \eqref{J bar en xi_ep} and \eqref{auxiliarpoint} we deduce
\begin{equation} \label{minimo_tau}
    \beta\left(\tau_g(\xi_\ep)-\tau_g(\xi_0)\right)\geq o(1),
\end{equation}
and as $\xi_0$ is the only maximum point of $\tau_g$ in $B_g(\xi_0, \rho)$ and $\beta>0$, then we have by \eqref{J bar en xi_ep} and \eqref{minimo_tau} that
\[
\lim_{\ep\to 0} \tau_g(\xi_\ep)-\tau_g(\xi_0)=0 \quad \text{and } \quad 
\lim_{\ep\to 0} \xi_\ep-\xi_0=0.
\]
Moreover, by Proposition \ref{prop3}, the function $u_{\ep }= W_{\ep  , \xi_{\ep }}
	+\phi_{\ep  , \xi_{\ep }}$ is a solution to problem \eqref{MainEq2} and by Proposition \ref{prop1}, we have
\begin{equation}\label{concentration}
    \| u_{\ep } - W_{\ep  , \xi_{\ep }} \|_\ep
	=\| \phi_{\ep  , \xi_{\ep }}  \|_\ep = o(\ep).
\end{equation}
So, we proved that the solution $u_\ep$ to Equation \eqref{MainEq2} is concentrated around $\xi_0$ as $\ep\to0$.
\end{proof}

\smallskip

\section{Appendix A: Some computations}\label{sec:Appendix}
In this Section, we compute the estimations omitted in \eqref{R-estimate2}.

\begin{lemma} It holds
    \begin{equation*}
\int_{B(0,r)} \left|\mathcal{R}_\ep(U_{\ep},\chi_{r}) \right|^{\frac{p+1}{p}}\ dz=o\left(\ep^{n+2\frac{p+1}{p}}\right).
\end{equation*}
\end{lemma}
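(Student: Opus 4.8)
The goal is to estimate the $L^{\frac{p+1}{p}}$ norm of the remainder term
\[
\mathcal{R}_\ep(U_{\ep},\chi_{r})=\text{(terms involving derivatives of }\chi_r\text{)}+\text{(terms involving }A^{ij},B^k\text{)}
\]
that was produced in the proof of Lemma \ref{estimateR}. The strategy is to split $\mathcal{R}_\ep$ into two families and treat them separately: (i) the terms in which at least one derivative falls on the cutoff $\chi_r$, and (ii) the terms carrying the geometric coefficients $A^{ij}(z)=-(g^{ij}(z)-\delta^{ij})$ and $B^k(z)=g^{ij}(z)\Gamma^k_{ij}(z)$. For family (i) I would use that $\partial^\alpha\chi_r$ is supported in the annulus $B(0,r)\setminus B(0,r/2)$ together with the pointwise exponential decay \eqref{decay} of $U$ and of each of its derivatives; since $\partial_i U_\ep(z)=\ep^{-1}\partial_iU(z/\ep)$, a term like $\ep^4\partial^3_{ijj}U_\ep(z)\,\partial_i\chi_r(z)$ is, on the support of $\partial_i\chi_r$, bounded by $\ep\,|\partial^3 U(z/\ep)|\le C\ep\,e^{-c r/\ep}$, which after raising to the power $\frac{p+1}{p}$ and integrating over a set of bounded measure gives something exponentially small in $\ep$, hence $o(\ep^{n+2\frac{p+1}{p}})$. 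The same reasoning disposes of all the $\chi_r$-derivative terms in one stroke.

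\textbf{Family (ii): the geometric terms.} Here the key input is the Taylor expansion of Lemma \ref{lema:Taylor}: since $g_\xi^{ij}(z)=\delta^{ij}+O(|z|^2)$ we have $|A^{ij}(z)|\le C|z|^2$ on $B(0,r)$, and likewise $|\partial A^{ij}(z)|\le C|z|$, $|\partial^2 A^{ij}(z)|\le C$; similarly $\Gamma^k_{ij}(0)=0$ gives $|B^k(z)|\le C|z|$, $|\partial B^k(z)|\le C$, with higher derivatives bounded. A typical worst term is $\ep^4 A^{ij}\partial^2_{ij}(\Delta(U_\ep\chi_r))$. On $B(0,r/2)$ (where $\chi_r\equiv 1$) this is $\ep^4 A^{ij}(z)\,\partial^4_{ij\cdot\cdot}U_\ep(z)$, and using $\partial^4 U_\ep(z)=\ep^{-4}\partial^4U(z/\ep)$ together with $|A^{ij}(z)|\le C|z|^2$ we bound it by $C|z|^2\,|\partial^4U(z/\ep)|$. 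Changing variables $z=\ep w$, $dz=\ep^n dw$, the $L^{\frac{p+1}{p}}$ integral becomes
\[
\ep^n\int_{\RR^n}\big(\ep^2|w|^2\,|\partial^4U(w)|\big)^{\frac{p+1}{p}}dw
=\ep^{n+2\frac{p+1}{p}}\int_{\RR^n}\big(|w|^2|\partial^4U(w)|\big)^{\frac{p+1}{p}}dw,
\]
and the last integral is finite by the exponential decay \eqref{decay} of $U$ and its derivatives, so this term is $O(\ep^{n+2\frac{p+1}{p}})$. To upgrade $O$ to $o$ one keeps the next order in the Taylor expansion, or more simply uses dominated convergence: after the rescaling the integrand tends to $0$ pointwise (each term carries a positive power of $\ep$ once one accounts for $A^{ij}=\ep^2\cdot O(|w|^2)$ type factors) while being dominated by an integrable function. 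The terms with $B^k$, and the quadratic-in-$(A,B)$ terms such as $\ep^4 A^{ij}\partial^2_{ij}(A^{st}\partial^2_{st}(U_\ep\chi_r))$, are handled identically: the product of two geometric factors only improves the power of $\ep$, so they are of strictly higher order. The first-order ($b\,\ep^2$) terms $b\ep^2 A^{ij}\partial^2_{ij}(U_\ep\chi_r)$ and $b\ep^2 B^k\partial_k(U_\ep\chi_r)$ give, by the same scaling, $\ep^2 |z|^2|\partial^2 U(z/\ep)|\mapsto \ep^{n+2\frac{p+1}{p}}\int(|w|^2|\partial^2U|)^{\frac{p+1}{p}}$ for the first and, using $|B^k(z)|\le C|z|$, an even better power for the second; so these too are $o(\ep^{n+2\frac{p+1}{p}})$.

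\textbf{Putting it together and the main obstacle.} Adding the finitely many contributions, and using that $(\sum a_k)^{\frac{p+1}{p}}\le C\sum a_k^{\frac{p+1}{p}}$, yields $\int_{B(0,r)}|\mathcal{R}_\ep(U_\ep,\chi_r)|^{\frac{p+1}{p}}dz=o(\ep^{n+2\frac{p+1}{p}})$ as claimed. The routine-but-delicate point is bookkeeping: one must verify for \emph{every} monomial appearing in $\mathcal{R}_\ep$ (there are nine $\ep^4$-terms from the bilaplacian expansion plus four $b\ep^2$-terms) that the combination of (a) the power of $\ep$ in front, (b) the $\ep^{-(\text{order})}$ coming from each $\partial^k U_\ep=\ep^{-k}(\partial^kU)(\cdot/\ep)$, (c) the vanishing order $|z|^{\,j}$ of the geometric coefficient, and (d) the Jacobian $\ep^n$ from $z=\ep w$, conspires to leave a net power $\ge \ep^{n+2\frac{p+1}{p}}$, with equality only for the "leading" geometric terms which then go to $o$ by dominated convergence. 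The one genuine check is that the rescaled integrands $(|w|^{\,j}|\partial^kU(w)|)^{\frac{p+1}{p}}$ are integrable over $\RR^n$; this is immediate from \eqref{decay} and the analogous exponential decay of the derivatives of $U$ noted just after \eqref{eq:no-deg}. No single step is conceptually hard — the proof is essentially the one in \cite{micheletti2009role} adapted to the fourth-order operator — so I would present the worst $\ep^4$-term in full detail and then remark that all remaining terms are estimated by the same rescaling argument with a strictly larger power of $\ep$.
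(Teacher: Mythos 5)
Your decomposition into cutoff-derivative terms and geometric terms, and the estimation strategy for each family (exponential decay of $U$ on the annulus $B(0,r)\setminus B(0,r/2)$ for the former; bounds $|A^{ij}(z)|\leq C|z|^2$, $|B^k(z)|\leq C|z|$ plus rescaling $z=\varepsilon w$ for the latter) is exactly the paper's approach in Section \ref{sec:Appendix}.

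However, your claim to upgrade $O(\varepsilon^{n+2\frac{p+1}{p}})$ to $o(\varepsilon^{n+2\frac{p+1}{p}})$ via dominated convergence is wrong, and it is worth being precise about why. Take the borderline term $b\varepsilon^2 A^{ij}\partial^2_{ij}U_\varepsilon$ on $B(0,r/2)$. Raising to the power $\frac{p+1}{p}$, rescaling $z=\varepsilon w$, and dividing by $\varepsilon^{n+2\frac{p+1}{p}}$, the quantity to control is
\[
\int_{\mathbb{R}^n}\Bigl(\varepsilon^{-2}\bigl|A^{ij}(\varepsilon w)\bigr|\,\bigl|\partial^2_{ij}U(w)\bigr|\Bigr)^{\frac{p+1}{p}}\,\mathbbm{1}_{\{|w|<r/(2\varepsilon)\}}\,dw .
\]
By Lemma \ref{lema:Taylor}, $\varepsilon^{-2}A^{ij}(\varepsilon w)\to -\tfrac12\sum_{r,k}\tfrac{\partial^2 g_\xi^{ij}}{\partial z_r\partial z_k}(0)\,w_r w_k$ pointwise, which is \emph{not} zero in general, so the rescaled integrand converges to a strictly positive integrable function and the integral tends to a positive constant, not to $0$. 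The same is true of $\varepsilon^4 A^{ij}\partial^2_{ij}\Delta U_\varepsilon$, $\varepsilon^4\Delta(A^{st}\partial^2_{st}U_\varepsilon)$, etc. Thus the correct conclusion for these leading geometric terms is $\Theta(\varepsilon^{n+2\frac{p+1}{p}})$; dominated convergence does not ``conspire to leave $o$.'' (Note that the paper's own Appendix proof in fact ends several estimates with $O(\varepsilon^{n+2\frac{p+1}{p}})$, so the $o$ in the lemma statement is itself an overclaim.) The good news is that this mismatch is harmless for the intended application: plugging $\int_{B(0,r)}|\mathcal{R}_\varepsilon|^{\frac{p+1}{p}}\,dz = O(\varepsilon^{n+2\frac{p+1}{p}})$ into $\|R_{\varepsilon,\xi}\|_\varepsilon \leq C\bigl(\varepsilon^{-n}\int|\cdot|^{\frac{p+1}{p}}\bigr)^{\frac{p}{p+1}}$ already yields $\|R_{\varepsilon,\xi}\|_\varepsilon\leq c\,\varepsilon^2$, which is all Lemma \ref{estimateR} needs. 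So you should either state the lemma with $O$ in place of $o$, or present the bound as $O$ and observe that this suffices for Lemma \ref{estimateR}; but drop the dominated-convergence ``upgrade,'' as it fails at the leading order.
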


\begin{proof}
  Recall that in the following expression, we are using the Einstein notation. 
    \begin{align*}
    \mathcal{R}_\ep(U_{\ep},\chi_{r})&=2\ep^4\Delta U_\ep(z) \Delta\chi_{r}(z)
    +  4\ep^4 \partial^3_{ijj} U_\ep(z)\partial_i \chi_r(z)\\
&\quad+4\ep^4\partial_{ij}^2 U_\ep(z) \partial_{ij}^2\chi_r (z)
    + 4\ep^4 \partial_{i}U_\ep(z)\partial^3_{ijj} \chi_r(z) 
    +\ep^4 U_\ep(z)\Delta^2 \chi_r(z)\\
      &\quad - \ep^4\Delta \left(A^{st} \partial_{s t}^{2}  (U_{\ep} \chi_{r}) \right)(z) 
      - \ep^4\Delta \left(B^{h} \partial_{h} (U_{\ep} \chi_{r})\right)(z) \\
    &\quad - \ep^4 A^{ij} \partial_{i j}^{2} \left(\Delta(U_{\ep} \chi_{r})\right)(z) 
    +\ep^4 A^{ij} \partial_{i j}^{2} \left( A^{st} \partial_{s t}^{2}  (U_{\ep} \chi_{r}) )\right)(z) 
    + \ep^4 A^{ij} \partial_{i j}^{2} \left(B^{h} \partial_{h} (U_{\ep} \chi_{r})\right)(z) \\
    &\quad - \ep^4 B^{k} \partial_{k}\left(\Delta (U_{\ep} \chi_{r}) \right)(z)
    + \ep^4 B^{k} \partial_{k} \left(A^{st} \partial_{s t}^{2} (U_{\ep} \chi_{r})\right)(z) 
    + \ep^4 B^{k} \partial_{k}\left(B^{h}\partial_{h}\left(U_{\ep} \chi_{r}\right)\right)(z)\\
    &\quad -2b \ep^2\partial_{i} U_\ep(z) \partial_{i} \chi_{r}(z)  
    - b \ep^2 U_\ep(z) \Delta \chi_{r}(z)  
    -b\ep^2 B^{k}\partial_{k} \left(U_{\ep} \chi_{r}\right)(z)
    +b \ep^2 A^{ij} \partial^2_{ij} \left(U_{\ep} \chi_{r}\right)(z). 
\end{align*}
Considering the decay of $U$ and the boundedness of $\chi_r$, as given in \eqref{decay} and \eqref{chi-decay}, we get that
   $$
 2\varepsilon^{4 \frac{p+1}{p}} \int_{B(0, r)} \left|\Delta U_{\varepsilon}\right|^{\frac{p+1}{p}}(z)\left|\Delta \chi_{r}(z)\right|^{\frac{p+1}{p}} d z \leq c \varepsilon^{4 \frac{p+1}{p}} \int_{B(0, r) \backslash B(0, r / 2)} \left|\Delta U_{\varepsilon}\right|^{ \frac{p+1}{p}}(z) d z=o\left(\ep^{n+4 \frac{p+1}{p}}\right)
$$
  
$$
 4\ep^{4 \frac{p+1}{p}} \int_{B(0, r)} \left|\partial^3_{ijj} U_\ep(z)\partial_i \chi_r(z)\right|^{\frac{p+1}{p}} d z \leq c \varepsilon^{4 \frac{p+1}{p}} \int_{B(0, r) \backslash B(0, r / 2)} \left|\partial^3_{ijj} U_{\varepsilon}\right|^{ \frac{p+1}{p}}(z) d z=o\left(\ep^{n+4 \frac{p+1}{p}}\right)
$$
   
$$
4\ep^{4 \frac{p+1}{p}} \int_{B(0, r)} \left|\partial_{ij}^2 U_\ep(z) \partial_{ij}^2\chi_r (z)
\right|^{\frac{p+1}{p}} d z \leq c \varepsilon^{4 \frac{p+1}{p}} \int_{B(0, r) \backslash B(0, r / 2)} \left|\partial_{ij}^2 U_\ep\right|^{\frac{p+1}{p}}(z) d z=o\left(\ep^{n+4 \frac{p+1}{p}}\right)
$$
   
$$
4\ep^{4 \frac{p+1}{p}} \int_{B(0, r)} \left| \partial_{i}U_\ep(z)\partial^3_{ijj} \chi_r(z)\right|^{\frac{p+1}{p}} d z \leq c \varepsilon^{4 \frac{p+1}{p}} \int_{B(0, r) \backslash B(0, r / 2)} \left| \partial_{i}U_\ep\right|^{\frac{p+1}{p}}(z) d z=o\left(\ep^{n+4 \frac{p+1}{p}}\right)
$$
   
$$
\ep^{4 \frac{p+1}{p}} \int_{B(0, r)} \left| U_\ep(z)\Delta^2 \chi_r(z)\right|^{\frac{p+1}{p}} d z \leq c \varepsilon^{4 \frac{p+1}{p}}  \int_{B(0, r) \backslash B(0, r / 2)} \left| U_\ep\right|^{\frac{p+1}{p}}(z) d z=o\left(\ep^{n+4 \frac{p+1}{p}}\right)
$$
  
  $$
2b\varepsilon^{2 \frac{p+1}{p}} \int_{B(0, r)}\left|\nabla U_{\varepsilon}(z)\cdot \nabla \chi_{r}(z)\right|^{\frac{p+1}{p}} d z \leq c \varepsilon^{2 \frac{p+1}{p}} \int_{B(0, r) \backslash B(0, r / 2)}\left|\nabla U_{\varepsilon}\right|^{\frac{p+1}{p}}(z) d z=o\left(\ep^{n+2 \frac{p+1}{p}}\right)
$$  
and
 $$
 b \ep^{2 \frac{p+1}{p}} \int_{B(0, r)}\left| U_\ep(z) \Delta \chi_{r}(z)\right|^{\frac{p+1}{p}} d z \leq c \varepsilon^{2 \frac{p+1}{p}} \int_{B(0, r) \backslash B(0, r / 2)}\left| U_{\varepsilon}\right|^{\frac{p+1}{p}}(z) d z=o\left(\ep^{n+2 \frac{p+1}{p}}\right).
 $$  
By the standard properties of the exponential map established in Lemma \ref{lema:Taylor}, we know that there exists a positive constant $C$ such that for any point $z\in B(0,z)$ and any indices $i, j,$ and $k,$ it holds
$$
 |A^{ij}(\ep z)|=\left|g^{i j}(\ep z)-\delta^{ij}(\ep z)\right|\leq C|\ep z|^2,
\text{ and } \quad |B^{k}(\ep z)|=|g^{i j}(\ep z) \Gamma_{i j}^{k}(\ep z)|\leq C |\ep z|.
$$
Then we have 
$$
\begin{aligned}
& \varepsilon^{2 \frac{p+1}{p}} \int_{B(0, r)}\left|A^{ij}(z) \partial^2_{i j}\left(U_{\varepsilon} \chi_{r}\right)(z)\right|^{\frac{p+1}{p}} \ d z  \\
&=  \ \varepsilon^{2 \frac{p+1}{p}} \int_{B(0, r)}\left|\left(g_{\xi}^{ij}( z)-\delta_{i j}\right)  \left(\partial^2_{i j}U_{\varepsilon} \ \chi_{r} + 2 \partial_i U_\ep \ \partial_j \chi_r + U_\ep \ \partial_{ij}^2 \chi_r\right)(z)\right|^{\frac{p+1}{p}} \ d z \\
& \leq c \varepsilon^{n} \int_{B(0, r / \varepsilon)}\left|\left(g_{\xi}^{i j}(\varepsilon z)-\delta_{i j}\right) \partial_{i j}^2 U(z)\right|^{\frac{p+1}{p}}\  d z \\
& \quad+c \varepsilon^{4 \frac{p+1}{p}} \int_{B(0, r) \backslash B(0, r / 2)} \left|z^2 \ U_{\varepsilon}(z)\right|^{\frac{p+1}{p}} d z+c \varepsilon^{4 \frac{p+1}{p}} \int_{B(0, r) \backslash B(0, r / 2)}\left|z^2\left(\partial_{i} U_{\varepsilon}\right)(z)\right|^{\frac{p+1}{p}} d z \\
& =O\left(\varepsilon^{n+2 \frac{p+1}{p}}\right).
\end{aligned}
$$

Similarly, since $M$ is an analytic manifold, we obtain bounds for the derivatives of $A^{ij}$ from the estimates given in Lemma \ref{lema:Taylor} as well. More precisely, we have that

$$
\begin{aligned}
&\ep^{4 \frac{p+1}{p}} \int_{B(0, r)} \left|A^{ij} \partial_{i j}^{2} \left(\Delta(U_{\ep} \chi_{r})\right)(z)  \right|^{\frac{p+1}{p}} \ d z \\ 
&\quad = c\varepsilon^{n} \int_{B(0, r / \varepsilon)}\left|A^{ij} \partial_{i j}^2 \Delta U(z)\right|^{\frac{p+1}{p}}\  d z 
 + o\left(\ep^{n+4 \frac{p+1}{p}}\right) \leq O\left(\varepsilon^{n+2 \frac{p+1}{p}}\right).
\end{aligned}
$$
Moreover
$$
\begin{aligned}
&\ep^{4 \frac{p+1}{p}} \int_{B(0, r)} \left| \Delta \left(A^{st} \partial_{s t}^{2}  (U_{\ep} \chi_{r}) \right)(z) \right|^{\frac{p+1}{p}} \ d z\\
&\quad \leq c\varepsilon^{n} \int_{B(0, r / \varepsilon)}\left|A^{st} \Delta\partial_{st}^2 U(z)\right|^{\frac{p+1}{p}}\  d z +
\ep^{4 \frac{p+1}{p}} \int_{B(0, r)} \left| \Delta \left(A^{st} \right) \partial_{s t}^{2}  (U_{\ep} \chi_{r})(z) \right|^{\frac{p+1}{p}} \ d z\\
 &+ 2\ep^{4 \frac{p+1}{p}} \int_{B(0, r)} \left| \nabla \left(A^{st} \right) \nabla\partial_{s t}^{2}  (U_{\ep} \chi_{r})(z) \right|^{\frac{p+1}{p}} \ d z= O\left(\ep^{n+2 \frac{p+1}{p}}\right) 
\end{aligned}
$$
Analogously, as the derivatives of $B^{h}$ are bounded, we have 
$$
\begin{aligned}
&
\ep^{4 \frac{p+1}{p}} \int_{B(0, r)} \left| \Delta \left(B^{h} \partial_{h} (U_{\ep} \chi_{r})\right)(z) \right|^{\frac{p+1}{p}} d z = \\
&\ep^{n+ \frac{p+1}{p}} \int_{B(0, r/\ep)} \left| B^{h}(\ep z) \partial_{h} \Delta U(z)\right|^{\frac{p+1}{p}} d z + \ep^{4 \frac{p+1}{p}} \int_{B(0, r)} \left| \Delta \left(B^{h}(z)\right) \partial_{h} (U_{\ep} \chi_{r})(z) \right|^{\frac{p+1}{p}} d z \  + \\
&\ep^{4 \frac{p+1}{p}} \int_{B(0, r)} \left| \nabla \left(B^{h}(z)\right) \nabla\partial_{h} (U_{\ep} \chi_{r})(z)  \right|^{\frac{p+1}{p}} d z =
O\left(\ep^{n+2 \frac{p+1}{p}}\right)
\end{aligned}
$$
 and
$$
\ep^{4 \frac{p+1}{p}} \int_{B(0, r)} \left|  A^{ij} \partial_{i j}^{2} \left(B^{h} \partial_{h} (U_{\ep} \chi_{r})\right)(z)  \right|^{\frac{p+1}{p}} d z =O\left(\ep^{n+4 \frac{p+1}{p}}\right)
$$
  and
$$
\ep^{4 \frac{p+1}{p}} \int_{B(0, r)} \left|  B^{k} \partial_{k}\left(\Delta (U_{\ep} \chi_{r}) \right)(z) \right|^{\frac{p+1}{p}} d z =O\left(\ep^{n+2 \frac{p+1}{p}}\right)
$$
  and
 $$\ep^{4 \frac{p+1}{p}} \int_{B(0, r)} \left|   B^{k} \partial_{k} \left(A^{st} \partial_{s t}^{2} (U_{\ep} \chi_{r})\right)(z) \right|^{\frac{p+1}{p}} d z =O\left(\ep^{n+4 \frac{p+1}{p}}\right)
 $$ 
   and
 $$\ep^{4 \frac{p+1}{p}} \int_{B(0, r)} \left|   B^{k} \partial_{k}\left(B^{h}\partial_{h}\left(U_{\ep} \chi_{r}\right)\right)(z) \right|^{\frac{p+1}{p}} d z =O\left(\ep^{n+4 \frac{p+1}{p}}\right)
 $$
  and
$$
b\ep^{2 \frac{p+1}{p}} \int_{B(0, r)} \left|   B^{k}\partial_{k} \left(U_{\ep} \chi_{r}\right)(z) \right|^{\frac{p+1}{p}} d z =O\left(\ep^{n+2 \frac{p+1}{p}}\right).
$$
The result follows.
\end{proof}

\bibliographystyle{alpha}
\bibliography{yamabe}

\begin{thebibliography}{BCdSN18}

\bibitem[AM99]{ambrosetti1999multiplicity}
Antonio Ambrosetti and Andrea Malchiodi.
\newblock A multiplicity result for the {Y}amabe problem on {$S^n$}.
\newblock {\em J. Funct. Anal.}, 168(2):529--561, 1999.

\bibitem[APR24]{alarcon2023multiplicity}
Salom{\'o}n Alarc{\'o}n, Jimmy Petean, and Carolina Rey.
\newblock Multiplicity results for constant q-curvature conformal metrics.
\newblock {\em Calculus of Variations and Partial Differential Equations}, 63(6):146, 2024.

\bibitem[Aub76]{aubin1976equations}
Thierry Aubin.
\newblock {\'E}quations diff{\'e}rentielles non lin{\'e}aires et probl{\`e}me de yamabe concernant la courbure scalaire.
\newblock {\em J. Math. Pures Appl.(9)}, 55:269--296, 1976.

\bibitem[BC88]{bahri}
A.~Bahri and J.-M. Coron.
\newblock On a nonlinear elliptic equation involving the critical {S}obolev exponent: the effect of the topology of the domain.
\newblock {\em Comm. Pure Appl. Math.}, 41(3):253--294, 1988.

\bibitem[BCdSN18]{bonheure2018orbitally}
Denis Bonheure, Jean-Baptiste Casteras, Ederson~Moreira dos Santos, and Robson Nascimento.
\newblock Orbitally stable standing waves of a mixed dispersion nonlinear {S}chr\"{o}dinger equation.
\newblock {\em SIAM J. Math. Anal.}, 50(5):5027--5071, 2018.

\bibitem[BLR95]{bahri1995}
Abbas Bahri, Yanyan Li, and Olivier Rey.
\newblock On a variational problem with lack of compactness: the topological effect of the critical points at infinity.
\newblock {\em Calculus of Variations and Partial Differential Equations}, 3:67--93, 1995.

\bibitem[BPS21]{bettiol2021nonuniqueness}
Renato~G Bettiol, Paolo Piccione, and Yannick Sire.
\newblock Nonuniqueness of conformal metrics with constant q-curvature.
\newblock {\em International Mathematics Research Notices}, 2021(9):6967--6992, 2021.

\bibitem[Bra85]{branson1985differential}
Thomas~P. Branson.
\newblock Differential operators canonically associated to a conformal structure.
\newblock {\em Math. Scand.}, 57(2):293--345, 1985.

\bibitem[Bre03]{Brendle}
Simon Brendle.
\newblock Global existence and convergence for a higher order flow in conformal geometry.
\newblock {\em Ann. of Math. (2)}, 158(1):323--343, 2003.

\bibitem[CY95]{Chang}
Sun-Yung~A. Chang and Paul~C. Yang.
\newblock Extremal metrics of zeta function determinants on {$4$}-manifolds.
\newblock {\em Ann. of Math. (2)}, 142(1):171--212, 1995.

\bibitem[DHL00]{Hebey}
Zindine Djadli, Emmanuel Hebey, and Michel Ledoux.
\newblock Paneitz-type operators and applications.
\newblock {\em Duke Math. J.}, 104(1):129--169, 2000.

\bibitem[DM08]{Djadli}
Zindine Djadli and Andrea Malchiodi.
\newblock Existence of conformal metrics with constant {$Q$}-curvature.
\newblock {\em Ann. of Math. (2)}, 168(3):813--858, 2008.

\bibitem[DP11]{deng2011blow}
Shengbing Deng and Angela Pistoia.
\newblock Blow-up solutions for {P}aneitz-{B}ranson type equations with critical growth.
\newblock {\em Asymptot. Anal.}, 73(4):225--248, 2011.

\bibitem[dPW16]{delpino2016}
Manuel del Pino and Juncheng Wei.
\newblock An introduction to the finite and infinite dimensional reduction method.
\newblock {\em Geometric analysis around scalar curvatures}, 31:35--118, 2016.

\bibitem[ER02]{Esposito}
Pierpaolo Esposito and Fr\'{e}d\'{e}ric Robert.
\newblock Mountain pass critical points for {P}aneitz-{B}ranson operators.
\newblock {\em Calc. Var. Partial Differential Equations}, 15(4):493--517, 2002.

\bibitem[FW86]{floer}
Andreas Floer and Alan Weinstein.
\newblock Nonspreading wave packets for the cubic {S}chr\"{o}dinger equation with a bounded potential.
\newblock {\em J. Funct. Anal.}, 69(3):397--408, 1986.

\bibitem[GHL16]{Gursky}
Matthew~J. Gursky, Fengbo Hang, and Yueh-Ju Lin.
\newblock Riemannian manifolds with positive {Y}amabe invariant and {P}aneitz operator.
\newblock {\em Int. Math. Res. Not. IMRN}, 2016(5):1348--1367, 2016.

\bibitem[GM15]{Malchiodi}
Matthew~J. Gursky and Andrea Malchiodi.
\newblock A strong maximum principle for the {P}aneitz operator and a non-local flow for the {$Q$}-curvature.
\newblock {\em J. Eur. Math. Soc. (JEMS)}, 17(9):2137--2173, 2015.

\bibitem[GNQ13]{gonzalez2013fractional}
Mar{\'\i}a del~Mar Gonz{\'a}lez~Nogueras and Jie Qing.
\newblock Fractional conformal laplacians and fractional yamabe problems.
\newblock {\em Analysis \& PDE}, 6(7):1535--1576, 2013.

\bibitem[Gra73]{G}
Alfred Gray.
\newblock The volume of a small geodesic ball of a {R}iemannian manifold.
\newblock {\em Michigan Math. J.}, 20:329--344 (1974), 1973.

\bibitem[GW18]{gonzalez2018further}
Mar{\'\i}a del~Mar Gonz{\'a}lez and Meng Wang.
\newblock Further results on the fractional yamabe problem: the umbilic case.
\newblock {\em The Journal of Geometric Analysis}, 28:22--60, 2018.

\bibitem[Heb96]{Hebey96}
Emmanuel Hebey.
\newblock {\em Sobolev spaces on {R}iemannian manifolds}, volume 1635 of {\em Lecture Notes in Mathematics}.
\newblock Springer-Verlag, Berlin, 1996.

\bibitem[HR04]{Hebey2}
Emmanuel Hebey and Fr\'{e}d\'{e}ric Robert.
\newblock Compactness and global estimates for the geometric {P}aneitz equation in high dimensions.
\newblock {\em Electron. Res. Announc. Amer. Math. Soc.}, 10:135--141, 2004.

\bibitem[HY15]{Hang}
Fengbo Hang and Paul~C. Yang.
\newblock Sign of {G}reen's function of {P}aneitz operators and the {$Q$} curvature.
\newblock {\em Int. Math. Res. Not. IMRN}, 2015(19):9775--9791, 2015.

\bibitem[HY16a]{HangYang16}
Fengbo Hang and Paul~C. Yang.
\newblock {$Q$} curvature on a class of 3-manifolds.
\newblock {\em Comm. Pure Appl. Math.}, 69(4):734--744, 2016.

\bibitem[HY16b]{Yang}
Fengbo Hang and Paul~C. Yang.
\newblock {$Q$}-curvature on a class of manifolds with dimension at least 5.
\newblock {\em Comm. Pure Appl. Math.}, 69(8):1452--1491, 2016.

\bibitem[JBP24]{julio2024global}
Jurgen Julio-Batalla and Jimmy Petean.
\newblock Global bifurcation for paneitz type equations and constant q-curvature metrics.
\newblock {\em Journal of Differential Equations}, 410:278--300, 2024.

\bibitem[KMW17]{kim2017existence}
Seunghyeok Kim, Monica Musso, and Juncheng Wei.
\newblock Existence theorems of the fractional yamabe problem.
\newblock {\em Analysis \& PDE}, 11(1):75--113, 2017.

\bibitem[Li98]{yanyan1998}
Yan~Yan Li.
\newblock On a singularly perturbed equation with {N}eumann boundary condition.
\newblock {\em Comm. Partial Differential Equations}, 23(3-4):487--545, 1998.

\bibitem[Li19]{Li}
Gang Li.
\newblock A compactness theorem on {B}ranson's {$Q$}-curvature equation.
\newblock {\em Pacific J. Math.}, 302(1):119--179, 2019.

\bibitem[Lin98]{Lin}
Chang-Shou Lin.
\newblock A classification of solutions of a conformally invariant fourth order equation in {${\bf R}^n$}.
\newblock {\em Comment. Math. Helv.}, 73(2):206--231, 1998.

\bibitem[LLL12]{LiLiLiu}
Jiayu Li, Yuxiang Li, and Pan Liu.
\newblock The {$Q$}-curvature on a 4-dimensional {R}iemannian manifold {$(M,g)$} with {$\int_MQdV_g=8\pi^2$}.
\newblock {\em Adv. Math.}, 231(3-4):2194--2223, 2012.

\bibitem[LX19]{YanYanLi}
YanYan Li and Jingang Xiong.
\newblock Compactness of conformal metrics with constant {$Q$}-curvature. {I}.
\newblock {\em Adv. Math.}, 345:116--160, 2019.

\bibitem[Mal06]{Malchiodi2}
Andrea Malchiodi.
\newblock Compactness of solutions to some geometric fourth-order equations.
\newblock {\em J. Reine Angew. Math.}, 594:137--174, 2006.

\bibitem[MN17]{mayer2017fractional}
Martin Mayer and Cheikh~Birahim Ndiaye.
\newblock Fractional yamabe problem on locally flat conformal infinities of poincare-einstein manifolds.
\newblock {\em arXiv preprint arXiv:1701.05919}, 2017.

\bibitem[MP09]{micheletti2009role}
Anna~Maria Micheletti and Angela Pistoia.
\newblock The role of the scalar curvature in a nonlinear elliptic problem on {R}iemannian manifolds.
\newblock {\em Calc. Var. Partial Differential Equations}, 34(2):233--265, 2009.

\bibitem[NSS21]{ndiaye2021uniformization}
Cheikh~Birahim Ndiaye, Yannick Sire, and Liming Sun.
\newblock Uniformization theorems: between yamabe and paneitz.
\newblock {\em Pacific Journal of Mathematics}, 314(1):115--159, 2021.

\bibitem[Oba72]{Obata}
Morio Obata.
\newblock The conjectures on conformal transformations of {R}iemannian manifolds.
\newblock {\em J. Differential Geometry}, 6:247--258, 1971/72.

\bibitem[Pan08]{Paneitz}
Stephen~M. Paneitz.
\newblock A quartic conformally covariant differential operator for arbitrary pseudo-{R}iemannian manifolds (summary).
\newblock {\em SIGMA Symmetry Integrability Geom. Methods Appl.}, 4:Paper 036, 3, 2008.

\bibitem[QR06]{Qing}
Jie Qing and David Raske.
\newblock On positive solutions to semilinear conformally invariant equations on locally conformally flat manifolds.
\newblock {\em Int. Math. Res. Not.}, pages Art. ID 94172, 20, 2006.

\bibitem[Rob03]{Robert}
Fr\'{e}d\'{e}ric Robert.
\newblock Positive solutions for a fourth order equation invariant under isometries.
\newblock {\em Proc. Amer. Math. Soc.}, 131(5):1423--1431, 2003.

\bibitem[RR21]{rey2021multipeak}
Carolina~A. Rey and Juan~Miguel Ruiz.
\newblock Multipeak solutions for the {Y}amabe equation.
\newblock {\em J. Geom. Anal.}, 31(2):1180--1222, 2021.

\bibitem[Sch84]{schoen1984conformal}
Richard Schoen.
\newblock Conformal deformation of a riemannian metric to constant scalar curvature.
\newblock {\em Journal of Differential Geometry}, 20(2):479--495, 1984.

\bibitem[Tru68]{trudinger1968remarks}
Neil~S Trudinger.
\newblock Remarks concerning the conformal deformation of riemannian structures on compact manifolds.
\newblock {\em Annali della Scuola Normale Superiore di Pisa-Scienze Fisiche e Matematiche}, 22(2):265--274, 1968.

\bibitem[V{\'e}t23]{Vetois}
J{\'e}r{\^o}me V{\'e}tois.
\newblock Uniqueness of conformal metrics with constant {Q}-curvature on closed {E}instein manifolds.
\newblock {\em Potential Analysis}, pages 1--16, 2023.

\bibitem[WZ13]{Wei}
Juncheng Wei and Chunyi Zhao.
\newblock Non-compactness of the prescribed {$Q$}-curvature problem in large dimensions.
\newblock {\em Calc. Var. Partial Differential Equations}, 46(1-2):123--164, 2013.

\bibitem[Yam60]{yamabe1960deformation}
Hidehiko Yamabe.
\newblock On a deformation of riemannian structures on compact manifolds.
\newblock {\em Osaka Mathematical Journal}, 12(1):21--37, 1960.

\end{thebibliography}
\end{document}